\documentclass[reqno]{amsart}

\usepackage{amssymb}
\usepackage{graphicx}
\usepackage{amscd}
\usepackage[pagebackref]{hyperref}
\usepackage{color}
\usepackage{tabularx}
\usepackage[table]{xcolor}
\usepackage{float}
\usepackage{graphics,amsmath,amssymb}
\usepackage{amsthm}
\usepackage{amsfonts}
\usepackage{latexsym}
\usepackage{epsf}
\usepackage{xifthen}
\usepackage{mathrsfs}
\usepackage{dsfont}
\usepackage{makecell}
\usepackage{subfig}
\usepackage{amsmath}
\usepackage{listings}
\usepackage{etoolbox}
\usepackage{fancyhdr}
\usepackage{pdflscape}
\usepackage[title,toc,titletoc]{appendix}
\usepackage{enumitem}
\usepackage[noadjust]{cite}
\usepackage{tikz}
\usetikzlibrary{automata,positioning,arrows}
\usepackage{young}
\usepackage[object=vectorian]{pgfornament} 
\usepackage{lipsum,tikz}
\usepackage{multirow}
\usepackage[OT2,T1]{fontenc}
\usepackage{mathtools}
\usepackage{ytableau}

\hypersetup{
	colorlinks=true, 
	linktoc=all, 
	linkcolor=blue} 

\numberwithin{equation}{section}

\theoremstyle{theorem}
\newtheorem{theorem}{Theorem}[section]
\newtheorem*{theorem*}{Theorem}

\newtheorem{corollary}[theorem]{Corollary}
\newtheorem{lemma}[theorem]{Lemma}
\newtheorem{proposition}[theorem]{Proposition}

\newtheorem{innercustomgeneric}{\customgenericname}
\providecommand{\customgenericname}{}
\newcommand{\newcustomtheorem}[2]{%
	\newenvironment{#1}[1]
	{%
		\renewcommand\customgenericname{#2}%
		\renewcommand\theinnercustomgeneric{##1}%
		\innercustomgeneric
	}
	{\endinnercustomgeneric}
}
\newcustomtheorem{ctheorem}{Theorem}
\newcustomtheorem{clemma}{Lemma}

\theoremstyle{definition}

\newtheorem*{example*}{Example}
\newtheorem*{examples*}{Examples}
\newtheorem{remark}[theorem]{Remark}
\newtheorem*{remark*}{Remark}
\newtheorem*{remarks*}{Remarks}
\newtheorem*{note*}{Note}

\newtheoremstyle{named}{}{}{\itshape}{}{\bfseries}{.}{.5em}{\thmnote{#3} #1}
\theoremstyle{named}

\newtheoremstyle{customized}{}{}{\itshape}{}{\bfseries}{.}{.5em}{\thmnote{#3}}
\theoremstyle{customized}

\newcommand{\arxiv}[1]{\href{http://arxiv.org/abs/#1}{arXiv:#1}}

\DeclareMathAlphabet{\mydutchcal}{U}{dutchcal}{m}{n}
\DeclareMathAlphabet{\myrsfso}{U}{rsfso}{m}{n}

\newcommand{\cM}{\myrsfso{M}}

\newcommand{\dcP}{\mydutchcal{P}}

\newcommand{\bs}{\boldsymbol{s}}

\newcommand{\Cat}{\operatorname{Cat}}
\newcommand{\ddd}{\operatorname{d}}
\newcommand{\sgn}{\operatorname{sgn}}

\newcommand{\rU}{\mathrm{U}}
\newcommand{\rD}{\mathrm{D}}
\newcommand{\rL}{\mathrm{L}}

\newcommand{\LHS}{\operatorname{LHS}}

\title[Hankel determinants of Catalan-like sequences]{Hankel determinants of Catalan-like sequences}

\author[S. Chern]{Shane Chern}
\address[S. Chern]{Fakult\"at f\"ur Mathematik, Universit\"at Wien, Oskar-Morgenstern-Platz 1, Wien 1090, Austria}
\email{chenxiaohang92@gmail.com, xiaohangc92@univie.ac.at}

\author[W. Shi]{Wenle Shi}
\address[W. Shi]{School of Mathematical Sciences, Dalian University of Technology, Dalian 116024, P.R. China}
\email{shi-wenle@hotmail.com}

\date{}

\keywords{Hankel determinant, Catalan-like sequence, Motzkin meander, Chebyshev polynomial, orthogonal polynomial, linear functional.}

\subjclass[2020]{15A15, 11C20, 05A05.}

\begin{document}
	
\sloppy

\begin{abstract}
	In this paper, we compute the (shifted) Hankel determinants of Catalan-like sequences, which arise naturally from the weighted enumerations of nonintersecting Motzkin meanders. Among these determinant evaluations, one and a half are newly discovered, featuring generic shifted Hankel determinants; two were formulated earlier by Cigler and Krattenthaler in an equivalent combinatorial form; and the rest were conjectured by Cigler. As an application, we further confirm a conjectural binomial determinant identity proposed by Cigler and Krattenthaler.
\end{abstract}

\maketitle

\section{Introduction}

For a sequence $(u_{n})_{n\ge 0}$, its $n$-th \emph{Hankel determinant} is
\begin{align*}
	\det_{0\leq i,j \leq N-1}(u_{i+j})=\det\begin{pmatrix}u_{0} & u_{1} & \cdots & u_{N-1}\\
		u_{1} & u_{2} & \cdots & u_{N}\\
		\vdots & \vdots & \ddots & \vdots\\
		u_{N-1} & u_{N} & \cdots & u_{2N-2}
	\end{pmatrix}.
\end{align*}
In addition, given a fixed nonnegative integer $m$, by replacing the sequence $(u_{n})_{n\ge 0}$ with $(u_{n+m})_{n\ge 0}$, we have the \emph{$m$-shifted Hankel determinants} $\det_{0\leq i,j \leq N-1}(u_{i+j+m})$. These determinants are closely related the theory of orthogonal polynomials; see Ismail's monograph~\cite[Chapter~2]{Ism2005} or Krattenthaler's surveys~\cite[Section~2.7]{Kra1999} and \cite[Section~5.4]{Kra2005}. There is a rich history in the study of Hankel determinants of classic sequences such as Bernoulli and Euler numbers \cite{AC1959}, their standard $q$-analogs \cite{CZ2017,CJ2024}, and other variants \cite{AW2002,DJ2021,DJ2022,Mil2002}.

Let $\bs=(s_k)_{k\geq0}$ be a fixed sequence. We define a two-dimensional array of numbers $\Cat_{n,k}^{(\bs)}$ by the recurrence for $n\ge 1$,
\begin{equation}\label{eq:a-rec}
	\Cat_{n,k}^{(\bs)}
	=\Cat_{n-1,k-1}^{(\bs)}+s_k \Cat_{n-1,k}^{(\bs)}
	+\Cat_{n-1,k+1}^{(\bs)},
\end{equation}
where the boundary values are
\begin{align}\label{eq:a-boundary}
	\Cat_{0,0}^{(\bs)}=1,\qquad \Cat_{0,k}^{(\bs)}=0\quad(k\geq1),\qquad \Cat_{n,k}^{(\bs)}=0\quad(k<0).
\end{align}
Aigner~\cite[p.~36]{Aig1999} called the numbers $\Cat_{n,0}^{(\bs)}$ \emph{Catalan-like numbers of type $\bs$}. This is because if we specialize the sequence $\bs$ to $(1,2,2,\ldots)$, then as exhibited in \cite[p.~36, Example~1]{Aig1999},
\begin{align*}
	\Cat_{n,0}^{(1,2,2,\ldots)} = \frac{1}{n+1}\binom{2n}{n},
\end{align*}
the $n$-th \emph{Catalan number}. We also have other important specializations:
\begin{itemize}[itemindent=*, leftmargin=*, itemsep=0pt, topsep=2pt]
	\item The \emph{Riordan numbers}~\cite[A005043]{OEIS} are given by $\Cat_{n,0}^{(0,1,1,\ldots)}$;
	
	\item The \emph{Fine numbers}~\cite[A000957]{OEIS} are given by $\Cat_{n,0}^{(0,2,2,\ldots)}$;
	
	\item The \emph{Motzkin numbers}~\cite[A001006]{OEIS} are given by $\Cat_{n,0}^{(1,1,1,\ldots)}$.
\end{itemize}
Aigner~\cite[p.~45, Proposition~6]{Aig1999} then showed that
\begin{align*}
	\det_{0\leq i,j \leq N-1}\big(\Cat_{i+j,0}^{(\bs)}\big) = 1,
\end{align*}
for any choice of $\bs$. Following this, the evaluations of shifted Hankel determinants for Catalan-like numbers were established in \cite{LM2019,MW2017,MWY2017}.

In this work, we treat all sequences $(\Cat_{n,k}^{(\bs)})_{n\ge 0}$ with $k$ fixed as \emph{Catalan-like sequences}, and we are going to evaluate their Hankel determinants at the specializations
\begin{align*}
	\bs = (b+c,c,c,\ldots), \qquad\qquad b\in\{0,1\},
\end{align*}
in which case we abbreviate
\begin{align*}
	\Cat_{n,k}^{(b,c)} := \Cat_{n,k}^{(b+c,c,c,\ldots)}.
\end{align*}

We start by noting that, in \cite[p.~146, Theorems~1 and 2]{CK2011}, Cigler and Krattenthaler evaluated the Hankel determinants for a weighted counting function for Motzkin meanders. As we will explain in Section~\ref{sec:comb}, their results reduce to the following two identities in our setting with $b=0$.

\begin{theorem}[Cigler--Krattenthaler]\label{th:b0-m0}
	Let $k\ge 0$ and write $N=(k+1)n+r$ with $0\le r\le k$. Then for $c$ an indeterminate,
	\begin{align}
		\det_{0\leq i,j \leq N-1}\big(\Cat_{i+j,k}^{(0,c)}\big) = \begin{cases}
			(-1)^{\binom{k+1}{2}n}, & \text{if $r=0$},\\
			0, & \text{otherwise}.
		\end{cases}
	\end{align}
\end{theorem}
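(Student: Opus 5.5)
The plan is to remove the parameter $c$ by a binomial transform, reduce to $c=0$, and then evaluate the resulting determinant with Christoffel's formula for Chebyshev polynomials of the second kind.

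\emph{Step 1: eliminating $c$.} For $\bs=(c,c,c,\ldots)$ every level step has weight $c$, including those at height $0$. So $\Cat_{n,k}^{(0,c)}$ counts Motzkin paths from height $0$ to height $k$ in which each level step has weight $c$. Deleting the level steps leaves a Dyck-type path of length $j$, and the level steps can be reinserted in $\binom{n}{j}$ ways. Hence
\begin{align*}
\Cat_{n,k}^{(0,c)}=\sum_{j}\binom{n}{j}c^{\,n-j}\,\Cat_{j,k}^{(0,0)} .
\end{align*}
Consequently the Hankel matrix of $(\Cat_{n,k}^{(0,c)})_n$ equals $L\,H\,L^{T}$. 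Here $L=(\binom{i}{j}c^{i-j})_{i,j}$ is lower unitriangular and $H$ is the Hankel matrix for $c=0$. So the determinant does not depend on $c$, and we may set $c=0$.

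\emph{Step 2: the case $c=0$ as a moment sequence.} Let $\mathcal L$ be the linear functional whose monic orthogonal polynomials are $p_m(x)=U_m(x/2)$, with all norms $h_m=1$ (the semicircle functional, $\mathcal L(x^n)=\Cat_{n,0}^{(0,0)}$). The three-term recurrence $x\,p_m=p_{m+1}+p_{m-1}$ is exactly the transfer rule \eqref{eq:a-rec} with $s\equiv0$. This gives $\mathcal L(x^n p_k(x))=\Cat_{n,k}^{(0,0)}$, because the coefficient of $p_k$ in $x^n=\sum_k \Cat_{n,k}^{(0,0)}p_k$ is extracted by orthonormality. Therefore the determinant in question is $\det_{0\le i,j\le N-1}\big(\mathcal L(x^{i+j}p_k(x))\big)$, a Hankel determinant of the functional $\mathcal L$ multiplied by the polynomial $p_k$.

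\emph{Step 3: Christoffel's formula.} The zeros of $p_k$ are $z_l=2\cos\theta_l$ with $\theta_l=\pi l/(k+1)$ for $l=1,\dots,k$, and they are distinct. The classical formula (e.g.\ \cite[Section~2.7]{Kra1999}) then gives
\begin{align*}
\det_{0\le i,j\le N-1}\big(\mathcal L(x^{i+j}p_k)\big)=(-1)^{kN}\prod_{m=0}^{N-1}h_m\cdot\frac{\det_{0\le i\le k-1,\,1\le l\le k}\big(p_{N+i}(z_l)\big)}{\det_{0\le i\le k-1,\,1\le l\le k}\big(z_l^{\,i}\big)} .
\end{align*}
The product of the $h_m$ is $1$. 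We also have $p_m(z_l)=\sin((m+1)\theta_l)/\sin\theta_l$.

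Write $N=(k+1)n+r$. Then $\sin((N+i+1)\theta_l)=(-1)^{ln}\sin((r+i+1)\theta_l)$, so $p_{N+i}(z_l)=(-1)^{ln}p_{r+i}(z_l)$. Pulling these signs out of the columns contributes $(-1)^{n\binom{k+1}{2}}$, and we are left with $\det\big(p_{r+i}(z_l)\big)$.

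\emph{Step 4: the two cases.}
\begin{itemize}
\item If $1\le r\le k$, then $i=k-r$ lies in $\{0,\dots,k-1\}$. That row is $p_k(z_l)=0$ for all $l$, so the determinant vanishes.
\item If $r=0$, the rows are $p_0,\dots,p_{k-1}$ evaluated at the $z_l$. Since these polynomials are monic of degrees $0,\dots,k-1$, row reduction turns this matrix into the Vandermonde matrix, and the ratio is $1$. Moreover $kN=k(k+1)n$ is even, so the total sign is $(-1)^{\binom{k+1}{2}n}$, as claimed.
\end{itemize}
The case $k=0$ is Aigner's result.

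The main obstacle is Step 3: one must state the Christoffel-type determinant formula with the correct sign convention $(-1)^{kN}$ and the correct normalisation. I would verify it directly for $k=1$ before relying on it. An alternative, should this step be problematic, is a Lindström–Gessel–Viennot argument on nonintersecting paths, as in \cite{CK2011}.
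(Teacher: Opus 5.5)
Your proof is correct and is essentially the paper's own reproof in Section~\ref{sec:b0}: it uses the Christoffel/Br\'ezin--Hikami formula (Lemma~\ref{le:Kra} via Corollaries~\ref{coro:Kra-1} and \ref{coro:Kra-2} with $m=0$), the periodicity $p_{(k+1)n+r}(z_l)=(-1)^{nl}p_r(z_l)$, the vanishing row $p_k(z_l)=0$ when $r\neq 0$, and the Vandermonde reduction when $r=0$. The one difference is your preliminary binomial-transform step reducing to $c=0$; it is valid but unnecessary, since the paper runs the same argument directly with the shifted zeros $\rho_j=c+2\cos\frac{j\pi}{k+1}$ of $P_k(x)=F_{k+1}(x-c)$.
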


\begin{theorem}[Cigler--Krattenthaler]\label{th:b0-m1}
	Let $k\ge 0$ and write $N=(k+1)n+r$ with $0\le r\le k$. Then for $c$ an indeterminate,
	\begin{align}
		\det_{0\leq i,j \leq N-1}\big(\Cat_{i+j+1,k}^{(0,c)}\big) = \begin{cases}
			(-1)^{\binom{k+1}{2}n} \tfrac{F_{(k+1)(n+1)}(c)}{F_{k+1}(c)}, & \text{if $r=0$},\\[6pt]
			(-1)^{\binom{k+1}{2}n + \binom{k}{2}} \tfrac{F_{(k+1)(n+1)}(c)}{F_{k+1}(c)}, & \text{if $r=k$},\\[6pt]
			0, & \text{otherwise}.
		\end{cases}
	\end{align}
\end{theorem}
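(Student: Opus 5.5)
The plan is to realize $\Cat_{n,k}^{(0,c)}$ as moments against a polynomial and then apply a Christoffel-type determinant formula. For $\bs=(c,c,\ldots)$, let $L$ be the linear functional with $L(x^n)=\Cat_{n,0}^{(0,c)}$. Its monic orthogonal polynomials satisfy $xp_k=p_{k+1}+c\,p_k+p_{k-1}$, so $p_k(x)=U_k\big(\tfrac{x-c}{2}\big)$ (Chebyshev polynomials of the second kind), and all $\lambda_k=1$. The standard moment/path interpretation of \eqref{eq:a-rec} gives $x^n=\sum_k \Cat_{n,k}^{(0,c)}p_k(x)$. By orthogonality, $\Cat_{n,k}^{(0,c)}=L(x^np_k(x))$, so the matrix in the theorem is $\big(L(x^{i+j}\,q(x))\big)$ with $q(x)=x\,p_k(x)$, of degree $k+1$.

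The roots of $q$ are $z_0=0$ and $z_l=c+2\cos\theta_l$ with $\theta_l=l\pi/(k+1)$ for $1\le l\le k$; they are distinct for indeterminate $c$. The classical Christoffel formula for Hankel determinants (Krattenthaler's Lemma, or the argument via $\det\big(L(x^{i+j}\prod_l(x-z_l))\big)$ in Ismail's book) then gives
\begin{equation*}
\det_{0\le i,j\le N-1}\big(L(x^{i+j}q)\big)=(-1)^{(k+1)N}\,\Big(\prod_{t<N}\lambda_1\cdots\lambda_t\Big)\,\frac{\det_{0\le i,l\le k}\big(p_{N+i}(z_l)\big)}{\prod_{l<l'}(z_{l'}-z_l)},
\end{equation*}
and the product of the $\lambda$'s equals $1$. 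For $l\ge1$ we have $p_m(z_l)=\sin((m+1)\theta_l)/\sin\theta_l$. For $l=0$ we have $p_m(0)=U_m(-c/2)=\pm F_{m+1}(c)$, where $F$ is the Chebyshev/Fibonacci-type polynomial of the paper, with $F_{n+1}=cF_n-F_{n-1}$ up to the sign normalization. So everything reduces to a $(k+1)\times(k+1)$ determinant whose rows are indexed by $m=N+1,\dots,N+k+1$.

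The vanishing cases come from the identity $\sin((M(k+1)\pm j)\theta_l)=\pm(-1)^{Ml}\sin(j\theta_l)$. Let $M(k+1)$ be the multiple of $k+1$ lying in the window $\{N+1,\dots,N+k+1\}$, and write the window as offsets $-a,\dots,b$ around it, with $a+b=k$. The row at offset $0$ has all trigonometric entries zero. If $a,b\ge1$, adding the rows at offsets $\pm1$ also kills all trigonometric entries. We then have two rows that are supported only in column $l=0$, so the determinant is zero. The case $b=0$ is exactly $r=0$, and $a=0$ is $r=k$.

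In the surviving cases, one row (the last row if $r=0$, the first if $r=k$) has only the entry $p_{(k+1)(n+1)-1}(0)=\pm F_{(k+1)(n+1)}(c)$. Expanding along it leaves a $k\times k$ trigonometric minor. By the reflection identity, this minor equals, up to sign, the minor with $m=1,\dots,k$, which is independent of $n$. I would determine this minor divided by $\prod_{1\le l<l'}(z_{l'}-z_l)$ not by direct computation but by comparison. Applying the same Christoffel formula to Theorem~\ref{th:b0-m0} (the case $q=p_k$, with $N=(k+1)n$, where the answer is $\pm1$) shows that this ratio is $\pm1$. The remaining Vandermonde factor $\prod_{l\ge1}(z_l-0)$ is, up to sign, $p_k(0)=\pm F_{k+1}(c)$, which produces the denominator.

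The main obstacle is the sign bookkeeping. One must track the sign from the Christoffel formula, from expanding along the first or last row, from the reflection $(-1)^{Ml}$ factors, and from the sign convention relating $U_m(-c/2)$ to $F_{m+1}(c)$. These must combine to $(-1)^{\binom{k+1}{2}n}$, with the extra factor $(-1)^{\binom{k}{2}}$ in the case $r=k$. I would settle this by carrying the parities explicitly, and then check small values $k=1,2$ and $n=0,1$ directly from \eqref{eq:a-rec}.
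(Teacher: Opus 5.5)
Your proposal is correct and is essentially the paper's own argument: your Christoffel formula for $q(x)=xP_k(x)$ is Lemma~\ref{le:Kra} with $m=1$. Your offset-$0$ and offset-$\pm1$ rows are exactly the relations $P_{N+(k-r)}(\rho_j)=0$ and $P_{N+(k-r+1)}(\rho_j)=-P_{N+(k-r-1)}(\rho_j)$ that the paper feeds into Corollary~\ref{coro:Kra-1}, and for $r=0,k$ the paper (Section~\ref{sec:b0m} with $m=1$) likewise isolates the single entry $P_{(k+1)n+k}(0)=(-1)^{(k+1)n+k}F_{(k+1)(n+1)}(c)$.

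The only difference is that the paper evaluates the remaining $k\times k$ minor directly: since the $P_i$ are monic, it equals the Vandermonde $\prod_{1\le i<j\le k}(\rho_j-\rho_i)$, so no comparison with Theorem~\ref{th:b0-m0} is needed. Carrying the parities explicitly from there does give exactly $(-1)^{\binom{k+1}{2}n}$ for $r=0$ and $(-1)^{\binom{k+1}{2}n+\binom{k}{2}}$ for $r=k$; checking small cases alone would not establish these signs.
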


In Theorem~\ref{th:b0-m1}, we have used a variant of Fibonacci polynomials defined recursively by
\begin{align}\label{eq:F-rec}
	F_n(x) = xF_{n-1}(x) - F_{n-2}(x),\qquad\qquad F_0(x) = 0,\quad F_1(x)=1.
\end{align}
In the same vein, we also introduce a variant of Lucas polynomials given by
\begin{align}\label{eq:L-rec}
	L_n(x) = xL_{n-1}(x) - L_{n-2}(x),\qquad\qquad L_0(x) = 2,\quad L_1(x)=x.
\end{align}

Following Theorems~\ref{th:b0-m0} and \ref{th:b0-m1}, Cigler~\cite{Cig2023} recently made further conjectural Hankel determinant evaluations. The main objective of this paper is to confirm these conjectures and work out several newly discovered generic shifted Hankel determinants of Catalan-like sequences.

We start with the case where $b=0$.

\begin{theorem}[Cigler~\cite{Cig2023}, Conjecture~5]\label{th:b0-m2}
	Let $k\ge 1$ and write $N=(k+1)n+r$ with $0\le r\le k$. Then for $c$ an indeterminate,
	\begin{align}
		&\!\!\!\!\det_{0\leq i,j \leq N-1}\big(\Cat_{i+j+2,k}^{(0,c)}\big)\notag\\
		&\quad = \begin{cases}
			(-1)^{\binom{k+1}{2}n} \left(\tfrac{F_{(k+1)(n+1)}(c)}{F_{k+1}(c)}\right)^2, & \text{if $r=0$},\\[6pt]
			(-1)^{\binom{k+1}{2}n + \binom{k-1}{2}} \left(\tfrac{F_{(k+1)(n+1)}(c)}{F_{k+1}(c)}\right)^2, & \text{if $r=k-1$},\\[6pt]
			(-1)^{\binom{k+1}{2}n + \binom{k}{2}}(k+1) F_{k+1}(c) \sum\limits_{j=1}^{n+1} \left(\tfrac{F_{(k+1)j}(c)}{F_{k+1}(c)}\right)^2, & \text{if $r=k$},\\[6pt]
			0, & \text{otherwise}.
		\end{cases}
	\end{align}
	In addition, the $r=k$ case also holds when $k=0$. Namely,
	\begin{align}
		\det_{0\leq i,j \leq n-1}\big(\Cat_{i+j+2,0}^{(0,c)}\big) = \sum\limits_{j=1}^{n+1} \big(F_{j}(c)\big)^2.
	\end{align}
\end{theorem}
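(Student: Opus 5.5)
We write $\mathcal L$ for the linear functional on polynomials in $x$ with moments $\mathcal L(x^n)=\Cat_{n,0}^{(0,c)}$. Since $\bs=(c,c,c,\ldots)$ is constant, the monic orthogonal polynomials of $\mathcal L$ satisfy $p_{k+1}(x)=(x-c)p_k(x)-p_{k-1}(x)$, so $p_k(x)=F_{k+1}(x-c)$. The standard fact (Viennot's theory, or the recurrence \eqref{eq:a-rec} read as $x\,p_k=p_{k+1}+s_kp_k+p_{k-1}$) gives
\begin{align*}
\Cat_{n,k}^{(0,c)}=\mathcal L\big(x^n p_k(x)\big).
\end{align*}
Hence the determinant in Theorem~\ref{th:b0-m2} is the Hankel determinant of the modified functional $\mathcal L\big(q(x)\,\cdot\,\big)$, where $q(x)=x^2F_{k+1}(x-c)$ is of degree $d=k+2$. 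Its roots are the double root $0$ and the simple roots $z_j=c+2\cos\frac{j\pi}{k+1}$ for $1\le j\le k$.

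For such a polynomial multiple of a functional, Christoffel's theorem in determinantal form (Lascoux/Elouafi type) gives
\begin{align*}
\det_{0\le i,j\le N-1}\mathcal L(x^{i+j}q(x))=\pm\,\det_{0\le i,j\le N-1}\mathcal L(x^{i+j})\cdot\frac{\det\big(p_{N+i}(z_j)\big)_{0\le i\le d-1,\;j}}{\Delta(z)}.
\end{align*}
Here the Hankel factor equals $1$ by Aigner's result, $\Delta(z)$ is the (confluent) Vandermonde determinant, and the double root $0$ contributes the two columns $p_{N+i}(0)$ and $p'_{N+i}(0)$. I would use this identity as a black box, tracking the sign $(-1)^{Nd}$ or the appropriate variant carefully. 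As a sanity check, the same method with $q=F_{k+1}(x-c)$ and $q=xF_{k+1}(x-c)$ should reproduce Theorems~\ref{th:b0-m0} and \ref{th:b0-m1}, which also fixes the sign conventions.

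Next I evaluate the entries. At $z_j$ write $z_j-c=2\cos\theta_j$ with $\theta_j=j\pi/(k+1)$. Then
\begin{align*}
p_{N+i}(z_j)=\frac{\sin((N+i+1)\theta_j)}{\sin\theta_j}.
\end{align*}
Writing $N=(k+1)n+r$, this equals $(-1)^{nj}\sin((r+i+1)\theta_j)/\sin\theta_j$. So the $k$ sine columns depend on $n$ only through signs, and they form essentially a matrix of values $U_{r+i}(\cos\theta_j)$ for $i=0,\dots,k+1$. These rows vanish whenever $r+i+1\equiv0\pmod{k+1}$ and repeat up to sign with period $k+1$. At the root $0$ the entries are $p_{N+i}(0)=F_{N+i+1}(-c)=(-1)^{N+i}F_{N+i+1}(c)$ and the derivative $p'_{N+i}(0)$.

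I then do a row reduction. Using the three-term recurrence of the $F$'s, I combine rows so that the sine block becomes a discrete sine transform matrix, whose determinant against $\Delta$ is explicit. Determining for which $r$ this block keeps full rank inside the $(k+2)\times(k+2)$ determinant gives the vanishing cases, leaving $r\in\{0,k-1,k\}$. In the surviving cases the remaining $2\times2$ minor on the $0$-columns is as follows.
\begin{itemize}
\item For $r=0$ and $r=k-1$, the minor is a Casoratian-type expression $F_aF_b'-F_a'F_b$ with indices shifted by multiples of $k+1$. It collapses, via $F_{(k+1)m}=F_{k+1}\cdot(\text{Chebyshev in }L_{k+1})$, to $\big(F_{(k+1)(n+1)}/F_{k+1}\big)^2$.
\item For $r=k$, the derivative column survives essentially alone. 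Here I would use the Christoffel--Darboux-type identity $\sum_{m} F_m(x)^2=F_{N+1}'F_N-F_{N+1}F_N'$ (or its analogue along the progression $m\equiv0\pmod{k+1}$) to produce $(k+1)F_{k+1}\sum_{j=1}^{n+1}\big(F_{(k+1)j}/F_{k+1}\big)^2$.
\end{itemize}
The case $k=0$ is the same computation with no sine columns.

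The main obstacle is this last step: evaluating the $0$-root minor and its derivative, and organizing the determinant so that the derivative entries collapse to the sum of squares with the factor $k+1$. I expect the factor $k+1$ to come from $\prod_j$ of the sine Vandermonde, i.e.\ $\prod_{j=1}^k 2\sin\theta_j$-type products, which equal $k+1$. Correctly tracking the signs $(-1)^{\binom{k+1}{2}n+\binom{k}{2}}$, including the confluent Vandermonde and the $(-1)^{nj}$ factors, is the other delicate point.
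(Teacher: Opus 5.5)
Your setup is the same as the paper's: $\Cat^{(0,c)}_{n,k}=\mathcal L(x^np_k)$ with $p_k=F_{k+1}(x-c)$, Aigner's $\det=1$, and the confluent Christoffel (Br\'ezin--Hikami) formula of Lemma~\ref{le:Kra} with $u=p_k$ and a double node at $0$. The gap is in the step you yourself flag as the main obstacle, and several of your predictions there are wrong.

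First, the row relations that do the work are $p_{N+i}(z_j)=(-1)^{nj}p_{r+i}(z_j)$, $p_k(z_j)=p_{2k+1}(z_j)=0$, and the reflection $p_{k+s}(z_j)=-p_{k-s}(z_j)$ of \eqref{eq:P-3}. The period-$(k+1)$ shift you quote gives $p_{M+k+1}(z_j)=(-1)^jp_M(z_j)$. That sign depends on the column, so it is not a row dependency and cannot give the vanishing for $1\le r\le k-2$. There, one zero row and two reflected rows force rank $\le k-1$ on the $z_j$-block.

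Second, in the surviving cases $r\in\{0,k-1,k\}$, permute and subtract rows. The $k$ rows that remain nonzero on the $z_j$-columns are then $\pm(-1)^{nj}p_i(z_j)$ for $0\le i\le k-1$. Their determinant is exactly $\pm\prod_j(-1)^{nj}\,\Delta(z)$, since it is a unitriangular transform of the Vandermonde. So this block contributes only signs, in particular $(-1)^{\binom{k+1}{2}n}$. No $\prod_j2\sin\theta_j=k+1$ survives, and the factor $k+1$ cannot come from there.

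Third, the leftover $2\times2$ minors at $0$ are not as you describe.
\begin{itemize}
\item For $r=0$ the rows are $p_{N+k}$ and $p_{N+k+1}+p_{N+k-1}$; for $r=k-1$ they are $p_{N+1}$ and $p_N+p_{N+2}$. In both cases they equal $w$ and $(x-c)w$ with $w=F_{(k+1)(n+1)}(x-c)$. So the minor is simply $w(0)^2=F_{(k+1)(n+1)}(c)^2$ (Lemma~\ref{le:det-derivative}), and dividing by $u(0)^2=F_{k+1}(c)^2$ gives the square. No Casoratian with index gap $k+1$ and no composition identity is involved.
\item For $r=k$ the leftover rows are $p_N=F_{(k+1)(n+1)}(x-c)$ and $p_{N+k+1}=F_{(k+1)(n+2)}(x-c)$. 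These vanish at every $z_j$ but not at $0$, so the minor is their full Wronskian at $x=0$; the value column does not drop out.
\end{itemize}

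Evaluating the $r=k$ Wronskian needs exactly the ``analogue along the progression'' that you name but do not supply. From $F_{st}(y)=F_s(y)F_t(L_s(y))$ one gets $F_{st}F_{s(t+1)}'-F_{st}'F_{s(t+1)}=F_s^2L_s'\cdot(F_tF_{t+1}'-F_t'F_{t+1})(L_s)$. Then $L_s'=sF_s$ and $F_tF_{t+1}'-F_t'F_{t+1}=\sum_{j\le t}F_j^2$ give $sF_s\sum_{j=1}^tF_{sj}^2$ (Proposition~\ref{prop:det-Fst}). Taking $s=k+1$, $t=n+1$ and evaluating at $y=-c$ is where the factor $k+1$ comes from. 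Use the permutation $\sigma(a)=k-a$ for $0\le a\le k$ and $\sigma(k+1)=k+1$ to fix the sign, and the argument closes as in the paper.
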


For generic $m$, we have the following result covering two residue classes. In particular, the $r=0$ case was conjectured by Cigler~\cite[Conjecture~6]{Cig2023}.

\begin{theorem}\label{th:b0-m}
	Let $1\le m\le k+1$ and write $N=(k+1)n+r$ with $0\le r\le k$. Then for $c$ an indeterminate,
	\begin{align}
		&\!\!\!\!\det_{0\leq i,j \leq N-1}\big(\Cat_{i+j+m,k}^{(0,c)}\big)\notag\\
		&\quad = \begin{cases}
			(-1)^{\binom{k+1}{2}n} \left(\tfrac{F_{(k+1)(n+1)}(c)}{F_{k+1}(c)}\right)^m, & \text{if $r=0$},\\[6pt]
			(-1)^{\binom{k+1}{2}n+\binom{k-m+1}{2}} \left(\tfrac{F_{(k+1)(n+1)}(c)}{F_{k+1}(c)}\right)^m, & \text{if $r=k-m+1$}.
		\end{cases}
	\end{align}
\end{theorem}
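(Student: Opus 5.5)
One natural route is to translate everything into the language of moments of a linear functional and then use the classical relation between shifted Hankel determinants and orthogonal polynomials. Let $\cM$ be the functional with moments $\cM(x^n)=\Cat_{n,0}^{(0,c)}$, whose monic orthogonal polynomials $p_k(x)$ satisfy the three-term recurrence with $s_0=c$, $s_k=c$ and all $\lambda_k=1$. With the shift $x=c+2\cos\theta$, these are Chebyshev-type polynomials that can be written explicitly through $F$ and $L$. The recurrence \eqref{eq:a-rec} gives $\Cat_{n,k}^{(0,c)}=\cM(x^n p_k(x))$. Hence $\det(\Cat_{i+j+m,k}^{(0,c)})=\det\big(\cM(x^{i+j+m}p_k(x))\big)$, which is a Hankel determinant of the functional $\cM_k:=x^m p_k(x)\cdot\cM$. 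Equivalently, it is a Hankel determinant for the sequence with generating function $\sum_n \Cat_{n,k}z^n = f(z)^{k+1}\cdot(\text{const})$, where $f$ is the Motzkin-type generating function with $c$. Since all $\lambda_k=1$, we have $\sum_n\Cat_{n,k}z^n=z^k f(z)^{k+1}$, with $f=1/(1-cz-z^2 f)$.

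The plan is then to compute this Hankel determinant with a block-version Christoffel formula. For a polynomial $q(x)$ of degree $d$ with roots $x_1,\dots,x_d$, the Christoffel theorem gives
$$\det_{0\le i,j\le N-1}\cM(x^{i+j}q(x)) = (\pm)\det_{0\le i,j\le N-1}\cM(x^{i+j}) \cdot \frac{\det(p_{N+i}(x_j))_{i,j}}{\Delta(x_1,\dots,x_d)}.$$
Here $\det\cM(x^{i+j})=1$ by Aigner's result. I would apply this with $q=x^mp_k$ and degree $d=k+m$. The roots of $p_k$ are $x_j=c+2\cos(j\pi/(k+1))$ for $j=1,\dots,k$, together with $0$ of multiplicity $m$. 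The multiple root is handled in the confluent form: columns of derivatives $p_{N+i}^{(l)}(0)$ for $l<m$, divided by factorials.

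Next, evaluate the resulting $(k+m)\times(k+m)$ determinant. At $x_j$, write $p_n(x_j)$ in terms of $e^{\pm in\theta_j}$ with $\theta_j=j\pi/(k+1)$. Setting $N=(k+1)n+r$, the values $p_{N+i}(x_j)$ for $i=0,\dots,k+m-1$ depend on $n$ only through a sign $(-1)^{jn}$, by periodicity mod $2(k+1)$. The $k$ columns at the roots of $p_k$ are therefore essentially fixed trigonometric vectors. The remaining $m$ columns at $0$ contain the polynomials $p_{N+i}(0)$ and their derivatives. For the base functional, $p_n(0)$ is a $F$-type expression evaluated at $c$, and these carry the $n$-dependence.

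The key step is a column/row reduction. Using the recurrence $p_{n+1}=(x-c)p_n-p_{n-1}$, I would reduce rows so that the $k\times(k+m)$ trigonometric block becomes a Vandermonde/DFT-type matrix supported on $k$ specific rows. This leaves an $m\times m$ complementary minor of the derivative columns at $0$. For $r=0$ or $r=k-m+1$, the surviving indices should align with multiples of $k+1$. Then the complementary minor becomes triangular, with diagonal entries $F_{(k+1)(n+1)}(c)/F_{k+1}(c)$. This quantity is exactly $p$ at the period point, via the identity $F_{(k+1)(n+1)}=F_{k+1}\cdot U_n(L_{k+1}/2)$-type factorization, and it yields the $m$-th power. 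The trigonometric part, divided by the Vandermonde, should give $\pm1$, with the sign $(-1)^{\binom{k+1}{2}n}$ coming from the $(-1)^{jn}$ factors, since $\sum_{j=1}^k j=\binom{k+1}2$. The extra $\binom{k-m+1}{2}$ comes from the row permutation when $r=k-m+1$.

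As a sanity check, $m=1,2$ should reproduce Theorems~\ref{th:b0-m1} and~\ref{th:b0-m2}. The main obstacle is the confluent part: showing that the derivatives $p_{N+i}^{(l)}(0)$, after elimination, combine into a triangular structure in exactly these two residue classes. I would first try to avoid derivatives altogether by perturbing. Replace $x^m$ with $\prod_{l=1}^m(x-\epsilon_l)$, compute the generic determinant as an $m\times m$ minor of values $p_{N+i}(\epsilon_l)$, and then let $\epsilon_l\to0$. That $m\times m$ minor should factor via a Cassini/Christoffel–Darboux-type identity for $p_n$ when the row offsets are congruent modulo $k+1$.
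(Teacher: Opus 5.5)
\textbf{There is a genuine gap.} Your framework is the paper's: the functional with $x^nP_k(x)\mapsto\Cat_{n,k}^{(0,c)}$, where $P_n(x)=F_{n+1}(x-c)$ are your $p_n$; unit Hankel determinants of the moments; and the confluent Br\'ezin--Hikami/Christoffel formula at the $k$ simple roots $\rho_j=c+2\cos\frac{j\pi}{k+1}$ of $P_k$ and the $m$-fold root $0$. But the step that produces the answer is left as a hope, and the tools you name for it would not deliver it.

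\emph{The row reduction.} A row operation must use the same coefficients in every column. The recurrence $P_{n+1}=(x-c)P_n-P_{n-1}$ has a coefficient depending on the evaluation point, so it cannot be used to reduce rows. What you need are relations among the root-column values whose coefficients do not depend on $j$. Besides the periodicity $P_{(k+1)n+t}(\rho_j)=(-1)^{nj}P_t(\rho_j)$, these are the vanishing $P_k(\rho_j)=0$ and the reflection $P_{k+s}(\rho_j)=-P_{k-s}(\rho_j)$ for $1\le s\le k$.
\begin{itemize}
\item For $r=0$ and $r=k-m+1$, the reduced indices $N+i-(k+1)n$, $0\le i\le k+m-1$, fill the window $[0,k+m-1]$, resp.\ $[k-m+1,2k]$. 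Both windows contain $[k-m+1,k+m-1]$, which is symmetric about $k$.
\item The row of reduced index $k$ vanishes on the root columns.
\item The rows of reduced indices $k\pm s$ ($1\le s\le m-1$) agree there up to the constant $-1$, so one row of each pair can be eliminated.
\item The $k$ remaining rows are $\pm(-1)^{nj}P_i(\rho_j)$, $0\le i\le k-1$ (reversed in the second case). Their determinant cancels the Vandermonde in the $\rho_j$ up to sign.
\end{itemize}
This symmetric-window structure is what singles out these two residue classes, not a congruence of offsets modulo $k+1$.

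\emph{The $m\times m$ minor.} You flag this yourself as the main obstacle, and it is not resolved. After elimination, the minor consists of the Taylor coefficients at $0$ of $P_M(x)$ and of $P_{M+i}(x)+P_{M-i}(x)$, $1\le i\le m-1$, where $M=(k+1)n+k$. The missing identity is $F_{M+1+i}+F_{M+1-i}=F_{M+1}L_i$, i.e.\ $P_{M+i}(x)+P_{M-i}(x)=P_M(x)L_i(x-c)$. So every row is the common factor $P_M(x)$ times a polynomial of exact degree $i$ with leading coefficient $1$. Triangularizing then gives the minor as $P_M(0)^m=\big((-1)^{(k+1)n+k}F_{(k+1)(n+1)}(c)\big)^m$.

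Two consequences follow:
\begin{itemize}
\item The diagonal entries are $\pm F_{(k+1)(n+1)}(c)$, not $F_{(k+1)(n+1)}(c)/F_{k+1}(c)$. The denominator $F_{k+1}(c)^m$ comes from the factor $u(0)^m=\big((-1)^kF_{k+1}(c)\big)^m$ of the confluent formula, which you did not track.
\item Your perturbation $x^m\to\prod_l(x-\epsilon_l)$ is a legitimate substitute for derivative columns, but it needs the same two ingredients. With them, the perturbed minor is $\prod_l P_M(\epsilon_l)$ times the Vandermonde of the $\epsilon_l$; no Cassini or Christoffel--Darboux identity enters.
\end{itemize}
As written, the proposal sets up the right machinery but does not contain the argument that evaluates it.
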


\begin{remark}
	The original conjectures of Cigler in Theorems~\ref{th:b0-m2} and \ref{th:b0-m} were stated in a different form. However, they can be recovered by the relation \eqref{eq:F-L}. The same argument also applies to the results below.
\end{remark}

We continue with the case where $b=1$.

\begin{theorem}[Cigler~\cite{Cig2023}, Conjecture~7]\label{th:b1-m0}
	Let $k\ge 0$ and write $N=(2k+1)n+r$ with $0\le r\le 2k$. Then for $c$ an indeterminate,
	\begin{align}
		\det_{0\leq i,j \leq N-1}\big(\Cat_{i+j,k}^{(1,c)}\big) = \begin{cases}
			1, & \text{if $r=0$},\\
			(-1)^{\binom{k+1}{2}}, & \text{if $r=k+1$},\\
			0, & \text{otherwise}.
		\end{cases}
	\end{align}
\end{theorem}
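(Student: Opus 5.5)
Our plan is to convert the Hankel determinant of $(\Cat_{n,k}^{(1,c)})_{n\ge0}$ into a moment problem and then use a continued-fraction (Jacobi/Hankel) expansion with the standard block-structure rule for vanishing Hankel determinants.

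\textbf{Step 1: generating function.} By the transfer-matrix (lattice path) interpretation of \eqref{eq:a-rec}, $\Cat_{n,k}^{(\bs)}$ counts Motzkin paths from height $0$ to height $k$ with level steps at height $j$ weighted by $s_j$. The standard first-passage decomposition gives
\begin{align*}
\sum_{n\ge0}\Cat_{n,k}^{(1,c)}x^n = G_0(x)\,\big(xC(x)\big)^k,
\end{align*}
where $C(x)$ is the generating function for paths on levels $\ge1$ with constant level weight $c$, so $C=1/(1-cx-x^2C)$. Similarly $G_0=1/(1-(1+c)x-x^2C)$. Substituting $x\mapsto$ the Chebyshev-type parametrization $x=q/(1+cq+q^2)$ (so that $xC=q$) turns everything into rational functions in $q$, with $F_n(c)$, $L_n(c)$ appearing as the coefficients in $q$-expansions.

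\textbf{Step 2: reduce to a linear functional.} Define the linear functional $\mathcal L$ on polynomials by $\mathcal L(y^n)=\Cat_{n,k}^{(1,c)}$. Equivalently, I would write $\Cat_{n,k}=\mathcal L_0(y^n p_k(y))$, where $\mathcal L_0$ is the moment functional of $\Cat_{n,0}^{(1,c)}$ and $p_k$ is the $k$-th orthogonal polynomial for $\mathcal L_0$ (since $p_k$ satisfies the three-term recurrence with coefficients $s_j$, and $\Cat_{n,k}=\mathcal L_0(y^np_k)$ follows from \eqref{eq:a-rec}). Thus the Hankel determinant is that of the moments of the (non-positive) functional $p_k\,d\mathcal L_0$. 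By the Christoffel-type formula, or directly, one computes the continued fraction (possibly with block/higher-degree partial denominators, i.e.\ an H-fraction rather than J-fraction) of $\sum_n \Cat_{n,k}x^n$.

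\textbf{Step 3: H-fraction and periodicity.} The key claim to establish is that the generating function admits a periodic Hankel continued fraction: after the first $k+1$ "steps" the structure repeats with period $2k+1$, with partial numerators $\pm1$ and degree jumps reflecting zero Hankel determinants for $r\notin\{0,k+1\}$. Concretely, I would guess that the generating function $f$ satisfies a quadratic functional equation $f = 1/(\text{poly of degree }k+1 - x^{2k+1}\cdot(\cdots) f')$ with a two-block pattern of sizes $k+1$ and $k$, which sum to $2k+1$. Then Buslaev/Han's theorem on H-fractions gives $\det = \prod (\pm\delta_j)^{\ldots}$, yielding $1$ at $N\equiv0$, $(-1)^{\binom{k+1}{2}}$ at $N\equiv k+1$ (the sign from the anti-diagonal block of size $k+1$), and $0$ elsewhere. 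The main obstacle is verifying this H-fraction exactly: I would first work in the $q$-parametrization where $f$ becomes $q^k(1+cq+q^2)/((1+q^2)(1-q\cdot\text{stuff}))$-type rational expressions, identify the quadratic equation over $\mathbb Q(c)[x]$ satisfied by $f$, and check that the two-step H-fraction transformation maps $f$ back to itself, using the identities \eqref{eq:F-rec}, \eqref{eq:L-rec} to cancel the $c$-dependence (explaining why the answer is independent of $c$). Alternatively, if the H-fraction is messy, I would fall back on the Cigler--Krattenthaler approach: interpret the determinant via Lindström--Gessel--Viennot as nonintersecting Motzkin meander families with the extra level weight $b=1$ at height $0$, and show by an involution that only one rigid configuration survives when $N\equiv 0$ or $k+1\pmod{2k+1}$.
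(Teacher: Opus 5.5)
\textbf{There is a genuine gap: the step that produces the determinant values is only conjectured.}

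Your Step~2 is the right setup. Your $p_k$ is $Q_k(y)=F_{k+1}(y-c)-F_k(y-c)$, which is orthogonal for $\mathcal L_0$ with all $b_n=1$. Hence $\det(\mathcal L_0(y^{i+j}))=1$ and $\Cat_{n,k}^{(1,c)}=\mathcal L_0(y^nQ_k(y))$. But the proof stops there.

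Everything that actually produces the values $1$, $(-1)^{\binom{k+1}{2}}$ and $0$ is delegated to Step~3, and Step~3 is a guess. The block pattern $k_0=k$, $k_1=k-1$, $k_2=k,\dots$ (equivalently, nonzero Hankel determinants exactly at $N\equiv 0,\,k+1 \pmod{2k+1}$) is read off from the statement you are trying to prove, so it is a heuristic, not a step. The facts that carry the content are neither derived nor checked:
\begin{itemize}
\item that the H-fraction has exactly these degrees;
\item that it is periodic;
\item that all partial numerators are $\pm1$ with the right signs.
\end{itemize}
You yourself call this verification the main obstacle.

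The fallback is not an argument either. The Cigler--Krattenthaler weight gives every level step the same weight $x+y$, so it only reaches $b=0$. For $b=1$ you would need a new sign-reversing involution on meander families, and you do not describe one.

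\textbf{How the gap is closed.} The Christoffel-type (Br\'ezin--Hikami/Krattenthaler) formula you mention in passing evaluates the determinant outright; no continued fraction is needed. With $u=Q_k$ and no shift it gives $\det(\mathcal L_0(y^{i+j}Q_k(y)))=(-1)^{kN}\det_{0\le i\le k-1,\,1\le j\le k}\big(Q_{N+i}(\varrho_j)\big)/\prod_{i<j}(\varrho_j-\varrho_i)$. Here $\varrho_j=c+2\cos\frac{(2j-1)\pi}{2k+1}$, $1\le j\le k$, are the distinct zeros of $Q_k$.

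From $Q_n(c+2\cos\vartheta)=\cos\frac{(2n+1)\vartheta}{2}/\cos\frac{\vartheta}{2}$ you get three facts:
\begin{itemize}
\item $Q_{(2k+1)n+r}(\varrho_j)=(-1)^nQ_r(\varrho_j)$;
\item $Q_k(\varrho_j)=0$;
\item $Q_{k+s}(\varrho_j)=-Q_{k-s}(\varrho_j)$ for $1\le s\le k$.
\end{itemize}
These settle every residue class:
\begin{itemize}
\item For $r=0$ the rows are $(-1)^nQ_i(\varrho_j)$. This is a Vandermonde determinant up to the sign $(-1)^{kN+kn}=1$.
\item For $r=k+1$ the rows are $(-1)^{n+1}Q_{k-1-i}(\varrho_j)$. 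This gives $(-1)^{kN+(n+1)k+\binom k2}=(-1)^{\binom{k+1}2}$.
\item For $1\le r\le k$ the row $i=k-r$ vanishes, so the determinant is $0$.
\item For $k+2\le r\le 2k$ the rows $i=2k-r$ and $i=2k-r+1$ coincide, so the determinant is $0$.
\end{itemize}
That is the whole proof. If you want to keep the H-fraction route instead, you must actually run the expansion and prove that it reproduces itself after two blocks. Run it on $f=q^k(1+cq+q^2)/(1-q)$, which is what your parametrization really gives: $xC=q$ forces $C=1+cq+q^2$ and $G_0=(1+cq+q^2)/(1-q)$.
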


\begin{theorem}[Cigler~\cite{Cig2023}, Conjecture~8]\label{th:b1-m1}
	Let $k\ge 1$ and write $N=(2k+1)n+r$ with $0\le r\le 2k$. Then,
	\begin{align}
		&\!\!\!\!\det_{0\leq i,j \leq N-1}\big(\Cat_{i+j+1,k}^{(1,c)}\big)\notag\\
		&\quad = \begin{cases}
			\frac{F_{(2k+1)n}(c)}{F_{2k+1}(c)} + \frac{F_{(2k+1)(n+1)}(c)}{F_{2k+1}(c)}, & \text{if $r=0$},\\[6pt]
			(-1)^{\binom{k}{2}}\left(\frac{F_{(2k+1)n}(c)}{F_{2k+1}(c)} + \frac{F_{(2k+1)(n+1)}(c)}{F_{2k+1}(c)}\right), & \text{if $r=k$},\\[6pt]
			(-1)^{\binom{k+1}{2}}\big(L_k(c)+L_{k+1}(c)\big)\frac{F_{(2k+1)(n+1)}(c)}{F_{2k+1}(c)}, & \text{if $r=k+1$},\\[6pt]
			(-1)^{k}\big(L_k(c)+L_{k+1}(c)\big)\frac{F_{(2k+1)(n+1)}(c)}{F_{2k+1}(c)}, & \text{if $r=2k$},\\[6pt]
			0, & \text{otherwise}.
		\end{cases}
	\end{align}
	In addition, the $r=0$ and $k$ cases also hold when $k=0$. Namely,
	\begin{align}
		\det_{0\leq i,j \leq n-1}\big(\Cat_{i+j+1,0}^{(1,c)}\big) = F_n(c) + F_{n+1}(c).
	\end{align}
\end{theorem}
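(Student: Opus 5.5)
We propose to derive Theorem~\ref{th:b1-m1} from a closed form for the generating function of $\Cat_{n,k}^{(1,c)}$, and then to evaluate the Hankel determinants through orthogonal polynomials and the associated linear functional. Write $\mathcal{L}_k$ for the linear functional on polynomials in $x$ defined by $\mathcal{L}_k(x^n)=\Cat_{n,k}^{(1,c)}$. The recurrence \eqref{eq:a-rec} says that $\Cat_{n,k}^{(\bs)}=\mathcal{L}_0(x^n p_k(x))$, where the $p_k$ are the monic orthogonal polynomials of the Jacobi matrix with diagonal $(1+c,c,c,\dots)$ and off-diagonal entries $1$. Concretely, $p_k(x)=F_{k+1}(x-c)+F_k(x-c)$ in terms of the variants \eqref{eq:F-rec}. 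Hence $\mathcal{L}_k(f)=\mathcal{L}_0(f\,p_k)$, and the determinants in question are Hankel determinants of the functionals $f\mapsto\mathcal{L}_0(x f p_k)$.

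First, substituting $x=c+t+t^{-1}$ (equivalently $x-c=2\cos\theta$) turns $p_k$ into a ratio of Chebyshev-type expressions, $(t^{k+1}-t^{-k-1}+t^{k}-t^{-k})/(t-t^{-1})$. Up to factors this is $\sin((2k+1)\theta/2)/\sin(\theta/2)$. This factorization explains the period $2k+1$ in Theorem~\ref{th:b1-m1}.

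Second, we use the standard reduction of a Hankel determinant of a functional of the form $\mathcal{L}_0(\,\cdot\,q)$ with $q$ a polynomial, namely Christoffel's formula as in Lascoux's or Krattenthaler's setting (cf.~\cite[Section~2.7]{Kra1999}). It expresses
\[
\det_{0\le i,j\le N-1}\big(\mathcal{L}_0(x^{i+j}q(x))\big)
\]
as $\det_{0\le i,j\le N-1}\big(\Cat_{i+j,0}^{(1,c)}\big)$, which equals $1$ by Aigner, times $(-1)^{N\deg q}$ times a $\deg q\times\deg q$ determinant
\[
\det\big(p_{N+i}(\xi_j)\big)\big/\,\mathrm{Vandermonde}(\xi),
\]
where the $\xi_j$ are the roots of $q$. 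Here $q(x)=x\,p_k(x)$ has $k+1$ roots: $x=0$, and $x=c+2\cos(2\pi\ell/(2k+1))$ for $\ell=1,\dots,k$. Evaluating $p_{N+i}$ at the nonzero roots uses $t=\omega^\ell$ with $\omega^{2k+1}=1$. Writing $N=(2k+1)n+r$, the values $p_{N+i}(\xi_\ell)$ depend only on $N+i \bmod 2k+1$ in a signed periodic way. The matrix therefore reduces to a discrete-Fourier-type matrix whose determinant vanishes unless the index window $\{r,\dots,r+k\}$ avoids certain repeated or zero rows. This should produce exactly the four surviving residues $r\in\{0,k,k+1,2k\}$. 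The column for the root $x=0$ contributes $p_{N+i}(0)$, which is expressible via Fibonacci polynomials at argument $-c$ and collapses to the factors $F_{(2k+1)n}(c)/F_{2k+1}(c)$ and $F_{(2k+1)(n+1)}(c)/F_{2k+1}(c)$ after using the identity $F_{(2k+1)m}/F_{2k+1}$ as a Chebyshev polynomial in $L_{2k+1}(c)$.

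An alternative route, which I would try if the Christoffel approach becomes messy, is to use the combinatorial model of Section~\ref{sec:comb}: via Lindström--Gessel--Viennot, interpret the determinant as counting nonintersecting Motzkin meander families. One would then mimic Cigler--Krattenthaler's proof of Theorem~\ref{th:b0-m1}, now with the level-$0$ weight $1+c$.

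The main obstacle is the explicit evaluation of the $(k+1)\times(k+1)$ determinant: extracting the signs $(-1)^{\binom{k}{2}}$, $(-1)^{\binom{k+1}{2}}$, $(-1)^k$, and especially the factor $L_k(c)+L_{k+1}(c)$ in the cases $r=k+1$ and $r=2k$. I expect that in those cases the row coming from $x=0$ pairs with a boundary row, leaving the value $p_k$-related quantity $p_{k}(0)$-type term. I would first verify small $k$ (say $k=1,2$) symbolically to pin down the row reductions before writing the general Vandermonde-quotient computation. The $k=0$ statement follows directly: there $q(x)=x$, so the determinant is $\pm p_N(0)$ with $p_N(x)=F_{N+1}(x-c)+F_N(x-c)$, giving $F_n(c)+F_{n+1}(c)$ after sign bookkeeping.
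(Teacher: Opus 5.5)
Your framework is the paper's: Christoffel's formula for $q(x)=x\,p_k(x)$ (Lemma~\ref{le:Kra} with $u=p_k$, $m=1$), normalized by $\det(\Cat_{i+j,0}^{(1,c)})=1$, with the behaviour of $p_N$ at the roots of $p_k$ doing the work. However, the orthogonal polynomials are misidentified, and the decisive steps are not carried out.

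\textbf{The sign error.} From $xp_0=p_1+s_0p_0$ with $s_0=1+c$ one gets $p_1(x)=x-c-1=F_2(x-c)-F_1(x-c)$. In general $p_k(x)=F_{k+1}(x-c)-F_k(x-c)$, which is the paper's $Q_k$; your $F_{k+1}(x-c)+F_k(x-c)$ is the family for $s_0=c-1$. Hence the nonzero roots are $\varrho_j=c+2\cos\frac{(2j-1)\pi}{2k+1}$, because $p_k(c+2\cos\vartheta)=\cos\frac{(2k+1)\vartheta}{2}\big/\cos\frac{\vartheta}{2}$. So $t^{2k+1}=-1$ rather than $1$, and $p_{(2k+1)n+r}(\varrho_j)=(-1)^n p_r(\varrho_j)$ is antiperiodic.

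This is not cosmetic. Carried out literally, your computation evaluates the theorem with $c\mapsto -c$ (up to a global sign), giving $F_{(2k+1)(n+1)}(c)-F_{(2k+1)n}(c)$ and $L_{k+1}(c)-L_k(c)$ where the statement has sums. Your $k=0$ claim already fails: with your $p$, $p_N(0)=(-1)^N\big(F_{N+1}(c)-F_N(c)\big)$, whereas $\Cat_{1,0}^{(1,c)}=1+c=F_1(c)+F_2(c)$, and no sign bookkeeping turns a difference into a sum. With the correct $p$, $p_N(0)=(-1)^N\big(F_{N+1}(c)+F_N(c)\big)$. The sums then come from $F_n(-x)=(-1)^{n-1}F_n(x)$ and $L_n(-x)=(-1)^nL_n(x)$.

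\textbf{The missing evaluation.} The key steps are left as ``should produce'' and ``I expect'', and your heuristic is off. Use $p_k(\varrho_j)=0$, $p_{k+s}(\varrho_j)=-p_{k-s}(\varrho_j)$ for $1\le s\le k$, and antiperiodicity. Then each row $p_{N+i}$ restricted to the $k$ root columns is $\pm$ one of $p_0,\dots,p_{k-1}$ or zero. So \emph{every} window of $k+1$ rows contains a zero row or a coinciding pair; no window avoids them.

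The determinant vanishes when there are two such degeneracies, which happens exactly for $r\notin\{0,k,k+1,2k\}$. Otherwise at most one row operation isolates a Vandermonde block. The value is then, up to an explicit sign, the $x=0$ entry of the row that becomes zero on the root columns, divided by $p_k(0)$. For $r\in\{0,k\}$ this entry is $p_{(2k+1)n+k}(0)$. For $r\in\{k+1,2k\}$ it is $\pm\big(p_{(2k+1)n+2k+1}(0)-p_{(2k+1)n+2k}(0)\big)$, not a ``$p_k(0)$-type term''.

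Evaluating these requires the product formulas for $D_M:=F_{M+1}-F_M$, taken at $y=-c$:
\[
D_{(2k+1)n+k}(y)=D_k(y)\,D_n\big(L_{2k+1}(y)\big),
\]
\[
D_{(2k+1)n+2k+1}(y)-D_{(2k+1)n+2k}(y)=D_k(y)\big(L_{k+1}(y)-L_k(y)\big)F_{n+1}\big(L_{2k+1}(y)\big).
\]
Here $D_k(-c)=p_k(0)$ cancels the denominator, and $F_t(L_{2k+1}(x))=F_{(2k+1)t}(x)/F_{2k+1}(x)$ gives the quotients. The identity $L_{k+1}(-c)-L_k(-c)=(-1)^{k+1}\big(L_{k+1}(c)+L_k(c)\big)$ is exactly the factor you could not locate. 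The signs $(-1)^{\binom{k}{2}}$, $(-1)^{\binom{k+1}{2}}$, $(-1)^k$ then come from the row permutations that put the root block in Vandermonde order.
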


\begin{theorem}[Cigler~\cite{Cig2023}, Conjecture~9]\label{th:b1-m2}
	Let $k\ge 2$ and write $N=(2k+1)n+r$ with $0\le r\le 2k$. Then for $c$ an indeterminate,
	\begin{align}\label{eq:b1-m2}
		&\!\!\!\!\det_{0\leq i,j \leq N-1}\big(\Cat_{i+j+2,k}^{(1,c)}\big)\notag\\
		&\quad = \begin{cases}
			\left(\frac{F_{(2k+1)n}(c)}{F_{2k+1}(c)} + \frac{F_{(2k+1)(n+1)}(c)}{F_{2k+1}(c)}\right)^2, & \text{if $r=0$},\\[6pt]
			(-1)^{\binom{k-1}{2}}\left(\frac{F_{(2k+1)n}(c)}{F_{2k+1}(c)} + \frac{F_{(2k+1)(n+1)}(c)}{F_{2k+1}(c)}\right)^2, & \text{if $r=k-1$},\\[6pt]
			(-1)^{\binom{k+1}{2}}\big(L_{k+1}(c)+L_{k}(c)\big)^2\left(\frac{F_{(2k+1)(n+1)}(c)}{F_{2k+1}(c)}\right)^2, & \text{if $r=k+1$},\\[6pt]
			-\big(L_{k+1}(c)+L_{k}(c)\big)^2\left(\frac{F_{(2k+1)(n+1)}(c)}{F_{2k+1}(c)}\right)^2, & \text{if $r=2k-1$},\\[6pt]
			0, & \text{if $r\ne \begin{aligned}
					&0,k,k\pm 1,\\
					&2k, 2k- 1,
				\end{aligned}$}
		\end{cases}
	\end{align}
	and
	\begin{align}\label{eq:b1-m2-sum}
		&\det_{0\leq i,j \leq (2k+1)n+2k-1}\big(\Cat_{i+j+2,k}^{(1,c)}\big) + (-1)^{\binom{k-1}{2}}\det_{0\leq i,j \leq (2k+1)n+k-1}\big(\Cat_{i+j+2,k}^{(1,c)}\big)\notag\\
		&\quad\quad\quad\quad\quad\quad = (-1)^k \big(L_{k+1}(c)+L_{k}(c)\big)^2 \big(F'_{k+1}(c)-F'_k(c)\big)\left(\frac{F_{(2k+1)(n+1)}(c)}{F_{2k+1}(c)}\right)^2,
	\end{align}
	where $F'(c) := \frac{\ddd}{\ddd x}\big\vert_{x\mapsto c} F(x)$. In addition, the $r=0$ case in \eqref{eq:b1-m2} and the relation \eqref{eq:b1-m2-sum} hold when $k=1$. Namely,
	\begin{align}
		\det_{0\leq i,j \leq 3n-1}\big(\Cat_{i+j+2,1}^{(1,c)}\big) = \left(\frac{F_{3n}(c)}{c^2-1} + \frac{F_{3n+3}(c)}{c^2-1}\right)^2,
	\end{align}
	and
	\begin{align}
		\det_{0\leq i,j \leq 3n+1}\big(\Cat_{i+j+2,1}^{(1,c)}\big) + \det_{0\leq i,j \leq 3n}\big(\Cat_{i+j+2,1}^{(1,c)}\big) = -\left(\frac{c+2}{c+1} F_{3n+3}(c)\right)^2.
	\end{align}
\end{theorem}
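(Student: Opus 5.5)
Our plan is to turn each Hankel determinant in Theorem~\ref{th:b1-m2} into a problem about orthogonal polynomials, or equivalently continued fractions. Let $\cM(x)=\sum_{n\ge0}\Cat_{n,0}^{(1,c)}x^n$. Its J-fraction has the constant coefficients $s=(1+c,c,c,\dots)$ and $\lambda_i=1$. Put $\mathcal{L}$ for the moment functional with $\mathcal{L}(z^n)=\Cat_{n,0}^{(1,c)}$, and let $p_k(z)$ be its monic orthogonal polynomials, so that $p_{k+1}=(z-c)p_k-p_{k-1}$ for $k\ge1$ and $p_1=z-1-c$.

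A path decomposition (cut at the last visit to level $k$) gives
\[
\Cat_{n,k}^{(1,c)}=\mathcal{L}\bigl(z^n p_k(z)\bigr).
\]
Hence the shifted sequence $\Cat_{n+2,k}^{(1,c)}$ has moments $\mathcal{L}_k(z^n):=\mathcal{L}(z^{n+2}p_k(z))$. After the substitution $z=c+t+t^{-1}$, the polynomials $p_j$ become Chebyshev-type expressions. Concretely, $p_j=U_j-U_{j-1}$ in the Fibonacci normalization, that is, $F_{j+1}-F_j$ evaluated at $z-c$. This is where $F_{k+1}-F_k$ enters, and where $L_k+L_{k+1}$ arises via \eqref{eq:F-L}.

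So the task becomes computing Hankel determinants of the functional $\mathcal{L}_k=\mathcal{L}\circ(z^2p_k\cdot)$, which is $\mathcal{L}$ multiplied by a polynomial of degree $k+2$. We use the Christoffel-type formula of Lascoux/Krattenthaler for a functional multiplied by a polynomial. Let the polynomial factor as $\prod_{\ell}(z-\zeta_\ell)$ over its $k+2$ roots. Then $\det(\mathcal{L}_k(z^{i+j}))_{0\le i,j\le N-1}$ equals, up to the known $\det(\mathcal{L}(z^{i+j}))=1$ and a sign, the determinant
\[
\det_{1\le \ell,\ell'\le k+2}\bigl(p_{N+\ell'-1}(\zeta_\ell)\bigr)\Big/\mathrm{Vandermonde}(\zeta).
\]
For the repeated root at $z=0$ coming from $z^2$, we take derivatives instead of evaluations. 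This derivative row is exactly the source of the $F'_{k+1}-F'_k$ term in \eqref{eq:b1-m2-sum}.

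Next we evaluate the roots. The roots of $p_k$ are $z=c+2\cos\theta$ with $t=e^{i\theta}$ satisfying $t^{2k+1}=-1$, $t\ne-1$, since $p_k\propto (t^{k+1}+t^{k})/(t^{1/2}\cdot\dots)$. At such $t$ we have $t^{N}$ periodic modulo $2k+1$ up to sign. Writing $N=(2k+1)n+r$, each row $p_{N+\ell'-1}(\zeta_\ell)$ reduces to $\pm p_{r+\ell'-1}(\zeta_\ell)$. In most rows the $n$-dependence factors out, and the only remaining $n$-dependence sits in the two rows belonging to $z=0$. At $z=0$ the corresponding $t$ satisfies $t+t^{-1}=-c$, and $p_N(0)$ is a combination of $F_{N}(c)$ and $F_{N+1}(c)$. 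This structure yields the square $\bigl(F_{(2k+1)n}+F_{(2k+1)(n+1)}\bigr)^2/F_{2k+1}^2$ for $r=0,k-1$, and yields $F_{(2k+1)(n+1)}^2$ for $r=k+1,2k-1$.

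The vanishing cases come from the small determinant $\det(p_{r+\ell'-1}(\zeta_\ell))$ over the $k$ roots of $p_k$. Its rows become linearly dependent unless the window $r,\dots,r+k+1$ avoids the appropriate indices modulo $2k+1$. We will carry this out by expressing each entry as $t^{j}\pm t^{-j}$ and reducing to a Vandermonde in $t$.

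The main obstacle is the exact bookkeeping. It splits into three parts:
\begin{itemize}
\item computing the surviving $k\times k$ root-of-unity determinants, together with their signs $(-1)^{\binom{k\pm1}{2}}$;
\item handling the confluent (derivative) rows at $0$;
\item for the sum identity \eqref{eq:b1-m2-sum}, showing that the two individually non-closed-form determinants at $r=k$ and $r=2k$ combine: their $F_{(2k+1)n}$-parts cancel and a derivative term remains.
\end{itemize}
We will try to do this by writing both as the same $(k+2)\times(k+2)$ determinant with shifted periodic rows and adding them along one row. The $k=1$ statements follow from the same formula, with $c^2-1=F_3$ and the explicit roots substituted.
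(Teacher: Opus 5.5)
Your proposal follows essentially the same route as the paper. Both use the orthogonal polynomials $Q_j(z)=F_{j+1}(z-c)-F_j(z-c)$, whose moment Hankel determinants equal $1$, together with the confluent Br\'ezin--Hikami/Christoffel formula of Lemma~\ref{le:Kra} (with $u=Q_k$ and a double root at $0$). Both then reduce by periodicity at the roots ($t^{2k+1}=-1$), obtain the vanishing residues from rank deficiency, and get \eqref{eq:b1-m2-sum} by adding the reduced $r=k$ and $r=2k$ determinants along their common row.

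The steps you flag as the main obstacle are lighter than your plan suggests.
\begin{itemize}
\item After reordering the indices $N+i$, the root block becomes $\pm Q_i(\varrho_j)$ with $0\le i\le k-1$. Its determinant is exactly the Vandermonde and cancels against the denominator (Corollary~\ref{coro:Kra-2}), so no $t$-Vandermonde has to be computed.
\item The $2\times2$ derivative block is evaluated by Lemma~\ref{le:det-derivative}, after factoring its rows via $Q_{a+i}+Q_{a-i}=Q_a\,L_i(x-c)$ (from \eqref{eq:F_n+-i}), \eqref{eq:D-1} and \eqref{eq:D-3}. No raw evaluation of $p_N'(0)$ is needed.
\end{itemize}
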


For generic $m$, we also have a result in the same manner as Theorem~\ref{th:b0-m}, but this time covering four residue classes.

\begin{theorem}\label{th:b1-m}
	Let $m\ge 1$ and write $N=(2k+1)n+r$ with $0\le r\le 2k$. Then for $c$ an indeterminate, we have, when $k\ge m-1$,
	\begin{align}
		&\det_{0\leq i,j \leq N-1}\big(\Cat_{i+j+m,k}^{(1,c)}\big)\notag\\
		&\qquad = \begin{cases}
			\left(\frac{F_{(2k+1)n}(c)}{F_{2k+1}(c)} + \frac{F_{(2k+1)(n+1)}(c)}{F_{2k+1}(c)}\right)^m, & \text{if $r=0$},\\[6pt]
			(-1)^{\binom{k-m+1}{2}} \left(\frac{F_{(2k+1)n}(c)}{F_{2k+1}(c)} + \frac{F_{(2k+1)(n+1)}(c)}{F_{2k+1}(c)}\right)^m, & \text{if $r=k-m+1$};
		\end{cases}
	\end{align}
	and when $k\ge m$,
	\begin{align}
		&\det_{0\leq i,j \leq N-1}\big(\Cat_{i+j+m,k}^{(1,c)}\big)\notag\\
		&\qquad = \begin{cases}
			(-1)^{\binom{k+1}{2}}\big(L_{k+1}(c)+L_{k}(c)\big)^m\left(\frac{F_{(2k+1)(n+1)}(c)}{F_{2k+1}(c)}\right)^m, & \text{if $r=k+1$},\\[6pt]
			(-1)^{\binom{m}{2}+mk}\big(L_{k+1}(c)+L_{k}(c)\big)^m\left(\frac{F_{(2k+1)(n+1)}(c)}{F_{2k+1}(c)}\right)^m, & \text{if $r=2k-m+1$}.
		\end{cases}
	\end{align}
\end{theorem}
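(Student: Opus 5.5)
This theorem is the $b=1$ analogue of Theorem~\ref{th:b0-m}, so I would prove it the same way: express $\Cat_{n,k}^{(1,c)}$ through a linear functional, turn the shifted Hankel determinant into a Christoffel-type determinant built from the associated orthogonal polynomials, and then evaluate that determinant at the residue classes where it collapses.

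\textbf{Step 1: functional and Chebyshev representation.} Let $L$ be the linear functional whose moments are $L(x^n)=\Cat_{n,0}^{(1,c)}$, with monic orthogonal polynomials $p_j(x)$. These satisfy
\begin{equation*}
p_{j+1}=(x-c)p_j-p_{j-1}\quad (j\ge1),\qquad p_1=x-c-1 .
\end{equation*}
Writing $x=c+t$, we get $p_j(x)=F_{j+1}(t)-F_j(t)$ in terms of the Fibonacci-type polynomials from \eqref{eq:F-rec}. The standard path-counting interpretation of \eqref{eq:a-rec} gives
\begin{equation*}
\Cat_{n,k}^{(1,c)}=L\big(x^n p_k(x)\big).
\end{equation*}
Hence $\det_{0\le i,j\le N-1}\big(\Cat_{i+j+m,k}^{(1,c)}\big)$ is the Hankel determinant of the modified functional $L_m(f)=L\big(x^m p_k(x) f\big)$.

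\textbf{Step 2: reduction to a small determinant.} By the Christoffel formula (the Lascoux / Elouafi–Hadj Kacem approach, or Krattenthaler's Lemma), multiplying $L$ by a polynomial $q$ of degree $d=m+k$ gives
\begin{equation*}
\det\big(L(x^{i+j}q)\big)_{0\le i,j\le N-1}=(\pm1)\det_{1\le i,j\le d}\big(p_{N+i-1}(\rho_j)\big)\Big/\Delta(\rho),
\end{equation*}
where $\rho_j$ are the roots of $q$ and $\Delta(\rho)$ is their Vandermonde determinant. The prefactor is the unshifted Hankel determinant, which equals $1$ by Aigner. Here the roots are $0$ with multiplicity $m$, where derivatives replace evaluations, together with the $k$ roots of $p_k$. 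Under $t=2\cos\theta$ with $t=x-c$, the roots of $p_k$ satisfy $\sin\big((k+1)\theta\big)=\sin(k\theta)$, that is $\theta=\tfrac{(2\ell+1)\pi}{2k+1}$ for $\ell=0,\dots,k-1$. At these points $p_j$ becomes periodic in $j$ with period $2k+1$, up to sign. This is the mechanism that produces the residues modulo $2k+1$.

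\textbf{Step 3: evaluate the determinant by residue class.} Write $N=(2k+1)n+r$. The columns attached to the roots of $p_k$ involve $p_{N+i-1}(\rho_\ell)$, which reduce to $\pm p_{r+i-1}(\rho_\ell)$. Since $p_k(\rho_\ell)=0$, the $k$ rows $i$ with $r+i-1\equiv k,\dots$ in the appropriate window vanish or coincide, and only for $r\in\{0,\,k-m+1,\,k+1,\,2k-m+1\}$ is the matrix block-triangular. For those $r$ the determinant splits as
\[
(\text{$k\times k$ block at the roots of $p_k$})\times(\text{$m\times m$ block at $x=0$}).
\]
\begin{itemize}
\item The first block, divided by its Vandermonde, is a signed constant computable from the $n=0$ case. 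This yields the signs $(-1)^{\binom{k-m+1}{2}}$, $(-1)^{\binom{k+1}{2}}$, and so on.
\item The second block is a confluent Wronskian-type matrix of the sequence $j\mapsto p_{N+j}$ and its derivatives at $x=0$, that is at $t=-c$. Because $p_j(c+t)=F_{j+1}(t)-F_j(t)$ and $p_j(x)$ satisfies a three-term recurrence, row operations reduce this block to triangular form with diagonal entries equal to a single quantity. For $r=0$ that quantity is $\tfrac{F_{(2k+1)n}(c)+F_{(2k+1)(n+1)}(c)}{F_{2k+1}(c)}$; for $r=k+1$ it is $(L_{k+1}+L_k)\tfrac{F_{(2k+1)(n+1)}}{F_{2k+1}}$, using $F_{a}(-c)=\pm F_a(c)$. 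This gives the $m$-th power.
\end{itemize}

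\textbf{Main obstacle.} The hardest part is this last block. The $m$-fold root at $0$ makes the matrix confluent, and one must show it is genuinely triangular rather than merely of the right degree. This is exactly where the hypotheses $k\ge m-1$ and $k\ge m$ enter: they keep the derivative rows from interacting with the $p_k$-root block. My first attempt would avoid derivatives altogether. I would replace $x^m$ by $\prod_{s=1}^m (x-\epsilon_s)$, choosing $\epsilon_s$ at distinct points where $t$ is a $(2k+1)$-th-root-type angle, so that periodicity still applies. I would evaluate that case and then let all $\epsilon_s\to0$, noting that the answer is a constant power and therefore survives the limit. Tracking the signs across the four residue classes is the remaining tedious bookkeeping.
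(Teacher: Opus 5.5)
Your Steps 1--2, and the use of periodicity to split off a $k\times k$ block at the roots of $p_k$, are the paper's framework: the functional $\Psi$ with $Q_n(x)=F_{n+1}(x-c)-F_n(x-c)$, together with Lemma~\ref{le:Kra} and Corollary~\ref{coro:Kra-2}. The gap is in the step you flag as the main obstacle, and as you describe it, it would not go through.

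The surviving $m\times m$ block is not a confluent Wronskian of the raw $p_{N+j}$. Nor is block-triangularity what singles out the four residue classes. For $m=2$, the classes $r=k$ and $r=2k$ also block-triangularize (Theorems~\ref{th:b1-rk} and \ref{th:b1-r2k}), but they give no perfect power. For $b=0$, $m=2$, $r=k$, the surviving block is the Wronskian at $0$ of $P_{(k+1)n+k}$ and $P_{(k+1)n+2k+1}$; by Proposition~\ref{prop:det-Fst} it is a sum of squares, not a square.

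The missing idea is a factorization, obtained as follows.
\begin{enumerate}
\item Besides \eqref{eq:Q-1}, use the reflection $p_{k+s}(\rho_\ell)=-p_{k-s}(\rho_\ell)$ for $1\le s\le k$, which is \eqref{eq:Q-3}.
\item Pair each of the last $m$ rows with one of the first $k$ rows and subtract, so the root columns vanish in those rows. The derivative block then has rows $p_{M+i}+p_{M-i}$ with $M=(2k+1)n+k$ in the cases $r=0$ and $r=k-m+1$. In the cases $r=k+1$ and $r=2k-m+1$ the rows are, up to sign, $p_{M'+1+i}-p_{M'-i}$ with $M'=(2k+1)n+2k$.
\item The hypotheses $k\ge m-1$, respectively $k\ge m$, are exactly what place the partner rows among the first $k$ rows.
\item These combinations are a common factor times monic polynomials of degree $i$. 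By \eqref{eq:F_n+-i}, $p_{M+i}+p_{M-i}=p_M(x)\,L_i(x-c)$. In the second family the rows equal $\big(p_{M'+1}-p_{M'}\big)(x)\,\big(F_{i+1}+F_i\big)(x-c)$. This is where the three-term recurrence genuinely enters: it applies to these combinations, not to consecutive $p_{N+j}$.
\item Only now does Lemma~\ref{le:det-derivative} make the Taylor-coefficient matrix triangular, giving $w(0)^m$. The value $w(0)$ is then computed from \eqref{eq:D-1}, respectively \eqref{eq:D-3}, together with \eqref{eq:F(-x)} and \eqref{eq:L(-x)}.
\end{enumerate}
Your claim that only the four listed $r$ give block-triangular form should be replaced by this factorization statement.

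Your fallback does not work either. The points where periodicity holds have the form $c+2\cos\theta$ with $\theta$ a fixed rational multiple of $\pi$. For indeterminate $c$ they cannot be sent to $0$. For generic $\epsilon_s$ near $0$ no periodicity is available, and the determinant genuinely depends on the $\epsilon_s$. So there is no ``constant power'' that survives the limit, and the confluent block has to be handled directly as above.
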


\begin{remark}
	Several cases of \cite[Conjecture~9]{Cig2023} were stated in a different form, and they can be recovered by the relation \eqref{eq:F-sum=L-diff}. These changes are made to maintain the consistency between Theorem~\ref{th:b1-m2} and Theorem~\ref{th:b1-m}.
\end{remark}

As an application, we establish a binomial determinant evaluation conjectured by Cigler and Krattenthaler~\cite[p.~171, Conjecture~24]{CK2011}.

\begin{theorem}[Cigler--Krattenthaler~\cite{CK2011}, Conjecture~24]\label{th:CK}
	Let $k\ge 2$ and write $N=(2k-1)n+r$ with $0\le r\le 2k-2$. Then,
	\begin{align}\label{eq:CK-k}
		\det_{0\leq i,j \leq N-1} \binom{2i+2j+3}{i+j+k+1} = \begin{cases}
			2n+1, & \text{if $r=0$},\\
			(-1)^{\binom{k-1}{2}}(2n+1), & \text{if $r=k-1$},\\
			(-1)^{\binom{k}{2}}(4n+4), & \text{if $r=k$},\\
			(-1)^{k-1}(4n+4), & \text{if $r=2k-2$},\\
			0, & \text{otherwise}.
		\end{cases}
	\end{align}
	In addition, the $r=0$ case also holds when $k=1$. Namely,
	\begin{align}\label{eq:CK-1}
		\det_{0\leq i,j \leq n-1} \binom{2i+2j+3}{i+j+2} = 2n+1.
	\end{align}
\end{theorem}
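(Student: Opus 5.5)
The plan is to recognize the binomial coefficients as a specialization of the Catalan-like numbers $\Cat_{n,k}^{(1,c)}$ at $c=2$, and then read off the result from Theorem~\ref{th:b1-m1} with $k$ replaced by $k-1$.

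\textbf{Step 1 (closed form at $c=2$).} For $\bs=(3,2,2,\ldots)$, i.e.\ $b=1$, $c=2$, I claim
\begin{align*}
	\Cat_{n,k}^{(1,2)}=\binom{2n+1}{n-k}\qquad(n,k\ge 0).
\end{align*}
I will prove this by checking that the right-hand side satisfies \eqref{eq:a-rec} and \eqref{eq:a-boundary}, extended by $0$ for $k<0$.
\begin{itemize}
	\item Boundary values: at $n=0$ the right-hand side is $\binom{1}{-k}$, which equals $1$ for $k=0$ and $0$ for $k\ge1$.
	\item Interior $k\ge1$: the recurrence reads $\binom{2n+1}{n-k+1}+2\binom{2n+1}{n-k}+\binom{2n+1}{n-k-1}=\binom{2n+3}{n+1-k}$. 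This follows from applying Pascal's rule twice, in the form $\binom{2n+3}{j}=\binom{2n+1}{j}+2\binom{2n+1}{j-1}+\binom{2n+1}{j-2}$ with $j=n+1-k$.
	\item Boundary row $k=0$: we need $3\binom{2n+1}{n}+\binom{2n+1}{n-1}=\binom{2n+3}{n+1}$. The same expansion gives $\binom{2n+1}{n+1}+2\binom{2n+1}{n}+\binom{2n+1}{n-1}$, and this agrees because $\binom{2n+1}{n+1}=\binom{2n+1}{n}$.
\end{itemize}

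\textbf{Step 2 (matching the matrix).} Write $k'=k-1\ge0$. Step 1 gives
\begin{align*}
	\Cat_{i+j+1,k'}^{(1,2)}=\binom{2i+2j+3}{i+j+1-k'}=\binom{2i+2j+3}{i+j+k+1}.
\end{align*}
Hence the determinant in \eqref{eq:CK-k} equals $\det_{0\le i,j\le N-1}\big(\Cat_{i+j+1,k'}^{(1,c)}\big)\big|_{c=2}$ with $N=(2k'+1)n+r$ and $0\le r\le 2k'$. This is exactly the setting of Theorem~\ref{th:b1-m1}: $k\ge2$ corresponds to $k'\ge1$, and $k=1$ corresponds to the supplementary $k'=0$ case, i.e.\ \eqref{eq:CK-1}.

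\textbf{Step 3 (specialization).} From \eqref{eq:F-rec} and \eqref{eq:L-rec} we get $F_n(2)=n$ and $L_n(2)=2$. Therefore
\begin{align*}
	\frac{F_{(2k'+1)n}(2)+F_{(2k'+1)(n+1)}(2)}{F_{2k'+1}(2)}=2n+1,\qquad \big(L_{k'}(2)+L_{k'+1}(2)\big)\frac{F_{(2k'+1)(n+1)}(2)}{F_{2k'+1}(2)}=4(n+1).
\end{align*}
The residue classes and signs then translate as follows.
\begin{itemize}
	\item $r=0$ gives $2n+1$.
	\item $r=k'=k-1$ gives $(-1)^{\binom{k-1}{2}}(2n+1)$.
	\item $r=k'+1=k$ gives $(-1)^{\binom{k}{2}}(4n+4)$.
	\item $r=2k'=2k-2$ gives $(-1)^{k-1}(4n+4)$.
	\item All other $r$ give $0$.
\end{itemize}
For $k=1$, the $k'=0$ supplement gives $F_n(2)+F_{n+1}(2)=2n+1$.

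\textbf{Main point requiring care.} Theorem~\ref{th:b1-m1} is stated for $c$ an indeterminate, and its right-hand side involves the quotients $F_{(2k'+1)j}(c)/F_{2k'+1}(c)$. To justify setting $c=2$, I will note that $F_{d}\mid F_{dj}$ as polynomials. This holds because $F_n(2\cos\theta)=\sin(n\theta)/\sin\theta$, or equivalently by the standard divisibility property of these Chebyshev-type polynomials. Consequently each quotient is a polynomial in $c$, both sides of the identity are polynomials in $c$, and evaluating at $c=2$ is legitimate. Beyond this point, and the Pascal-rule verification in Step 1, the argument is pure bookkeeping.
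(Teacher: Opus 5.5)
Your proposal is correct and follows the paper's proof. The paper also identifies $\Cat_{n,k}^{(1,2)}$ with $\binom{2n+1}{n+k+1}$ (which equals your $\binom{2n+1}{n-k}$ by symmetry) via Pascal's rule, and then specializes Theorem~\ref{th:b1-m1} at $c=2$ with $k$ replaced by $k-1$, using $F_n(2)=n$ and $L_n(2)=2$; your justification of the evaluation at $c=2$ (polynomiality of the quotients via \eqref{eq:F-L}, or simply $F_{2k-1}(2)=2k-1\neq 0$) is a harmless detail the paper leaves implicit.
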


\begin{remark}
	The evaluation \eqref{eq:CK-1} was proven by Krattenthaler and Yaqubi in \cite[p.~263, Corollary~18]{KY2018}.
\end{remark}

\textbf{Outline of the paper.} In Section~\ref{sec:comb}, we interpret the (shifted) Hankel determinants of Catalan-like sequences as the weighted enumerations of nonintersecting Motzkin meanders, and explain the two results of Cigler and Krattenthaler in Theorems~\ref{th:b0-m0} and \ref{th:b0-m1} in this combinatorial context. In Section~\ref{sec:orthogonal}, we provide some preliminary techniques from the theory of orthogonal polynomials for Hankel determinant evaluations. In Section~\ref{sec:F-L}, we collect some basic properties of the polynomials $F_n(x)$ and $L_n(x)$. With these preparations, we prove our Hankel determinant expressions for $b=0$ and $1$ in Sections~\ref{sec:b0} and \ref{sec:b1}, respectively. Finally, we confirm the binomial determinant conjecture of Cigler and Krattenthaler in Section~\ref{sec:binomial}.

\section{Motzkin meanders, a combinatorial perspective}\label{sec:comb}

A \emph{Motzkin meander} $\cM$ is a lattice path starting from a certain point $(a_1,0)$ on the horizontal axis, ending at another point $(b_1,b_2)$ in the upper half plane, and never falling below the horizontal axis such that only \emph{up} steps $\rU:=(1,1)$, \emph{down} steps $\rD:=(1,-1)$, and \emph{level} steps $\rL:=(1,0)$ are used.\footnote{Strictly speaking, it is usually required that a Motzkin meander starts from the origin $(0,0)$. Here we have loosened this condition for a more general consideration.} For every level step $\rL$, we define its \emph{height} $h(\rL)$ as the vertical coordinate of this step.

Let $\bs=(s_k)_{k\geq0}$ be a fixed sequence. For every Motzkin meander $\cM$, we assign a \emph{weight}
\begin{align*}
	w^{(\bs)}(\cM) := \prod_{\rL\in \cM} s_{h(\rL)},
\end{align*}
where the product runs over all level steps in $\cM$. Summing over all Motzkin meanders from a certain point $(a_1,0)$ to another point $(b_1,b_2)$ such that $b_1\ge a_1$, we have
\begin{align}\label{eq:M-gf}
	\sum_{\cM:(a_1,0)\to (b_1,b_2)} w^{(\bs)}(\cM) = \Cat_{b_1-a_1,b_2}^{(\bs)},
\end{align}
by recalling the recurrence \eqref{eq:a-rec}.

Now we say a family of Motzkin meanders is \emph{nonintersecting} if no two paths in the family intersect at a lattice point. Assign $N$ starting points
\begin{align*}
	A_i := (-i,0),\qquad 0\le i\le N-1,
\end{align*}
and $N$ ending points
\begin{align*}
	B_j := (j+m,k),\qquad 0\le j\le N-1.
\end{align*}
The following result follows immediately from the well-known \emph{Lindstr\"om--Gessel--Viennot lemma}~\cite{Lin1973,GV1985} with \eqref{eq:M-gf} utilized.

\begin{proposition}\label{prop:M-nonintersecting}
	Let $k\ge 0$. Then,
	\begin{align}
		\sum_{\sigma\in \mathfrak{S}_N}\sgn(\sigma)
		\sum_{\substack{\dcP:=(\cM_0,\ldots,\cM_{N-1})\\
				\cM_i:A_i\to B_{\sigma(i)}\\
				\text{$\dcP$ nonintersecting}}} \prod_{i=0}^{N-1} w^{(\bs)}(\cM_i) = \det_{0\leq i,j \leq N-1}\big(\Cat_{i+j+m,k}^{(\bs)}\big),
	\end{align}
	where $\mathfrak{S}_N$ is the symmetric group and $\sgn(\sigma)$ is the signature of $\sigma\in \mathfrak{S}_N$.
\end{proposition}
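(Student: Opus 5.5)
The plan is a direct application of the Lindstr\"om--Gessel--Viennot lemma to the planar lattice on which Motzkin meanders live. Take the directed graph whose vertices are the lattice points $(x,y)$ with $y\ge 0$, with edges $(x,y)\to(x+1,y+1)$ of weight $1$, $(x,y)\to(x+1,y-1)$ of weight $1$ (present only when $y\ge 1$, so that paths never fall below the horizontal axis), and $(x,y)\to(x+1,y)$ of weight $s_y$. A level step at height $y$ then carries weight $s_{h(\rL)}$, and the weight of a path is the product of its edge weights, which is exactly $w^{(\bs)}(\cM)$. The graph is acyclic because every edge increases the horizontal coordinate by one, and each vertex has only finitely many outgoing edges.

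First, I will identify the path generating function between the chosen sources and sinks. A path from $A_i=(-i,0)$ to $B_j=(j+m,k)$ uses exactly $i+j+m$ steps. By \eqref{eq:M-gf}, which follows by induction on the number of steps from the recurrence \eqref{eq:a-rec} and the boundary conditions \eqref{eq:a-boundary}, the total weight of such paths is $\Cat_{i+j+m,k}^{(\bs)}$. Hence the LGV path matrix is precisely $\big(\Cat_{i+j+m,k}^{(\bs)}\big)_{0\le i,j\le N-1}$.

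Second, I will invoke the LGV lemma in its general form. For an acyclic directed graph with sources $A_0,\dots,A_{N-1}$ and sinks $B_0,\dots,B_{N-1}$, the determinant of the path matrix equals the signed sum over all permutations $\sigma$ and all vertex-disjoint path families with $A_i\to B_{\sigma(i)}$, each family weighted by $\sgn(\sigma)$ times the product of its path weights. This is exactly the left-hand side of the proposition, so no compatibility condition is needed. The proof of the lemma is the usual sign-reversing involution: in an intersecting family, locate the first intersection point, say the smallest index $i$ whose path meets another and the first vertex along $\cM_i$ shared with some path, and swap the tails there. Swapping changes $\sgn(\sigma)$ but preserves the multiset of edges and hence the weight, so these contributions cancel.

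The only point needing care is that this tail-swap be a well-defined involution, which I regard as standard. Because we keep all permutations together with their signs rather than claiming only the identity survives, nothing more is required. In particular, $m\ge 0$ guarantees $i+j+m\ge 0$, so every entry is a genuine path count.
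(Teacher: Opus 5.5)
Your proposal is correct and follows the same route as the paper, which simply invokes the Lindstr\"om--Gessel--Viennot lemma, with \eqref{eq:M-gf} identifying the path-matrix entries as $\Cat_{i+j+m,k}^{(\bs)}$. Your extra details (weight $s_y$ on level edges, acyclicity, and using the general signed form of the lemma so that no compatibility condition is needed) only make explicit what the paper leaves implicit.
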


In \cite{CK2011}, Cigler and Krattenthaler considered another weight for Motzkin meanders, given by
\begin{align*}
	w_{\mathrm{CK}}(\cM) := (x+y)^{\#_{\cM}(\rL)} (xy)^{\#_{\cM}(\rD)},
\end{align*}
where $\#_{\cM}(\rL)$ and $\#_{\cM}(\rD)$ denote the numbers of level and down steps in $\cM$, respectively. Now if we make the specialization
\begin{align*}
	y = x^{-1},
\end{align*}
then $w_{\mathrm{CK}}$ reduces to our weight $w^{(\bs)}$ with
\begin{align*}
	\bs = (c,c,c,\ldots),
\end{align*}
where
\begin{align*}
	c = x+x^{-1}.
\end{align*}

Write $N=(k+1)n+r$ with $0\le r\le k$. By virtue of Proposition~\ref{prop:M-nonintersecting}, it follows from \cite[p.~146, Theorem~1]{CK2011} that
\begin{align*}
	\det_{0\leq i,j \leq N-1}\big(\Cat_{i+j,k}^{(0,c)}\big) = \begin{cases}
		(-1)^{\binom{k+1}{2}n}, & \text{if $r=0$},\\
		0, & \text{otherwise},
	\end{cases}
\end{align*}
thereby giving Theorem~\ref{th:b0-m0}. In a similar vein, we know from \cite[p.~146, Theorem~2]{CK2011} that
\begin{align*}
	\det_{0\leq i,j \leq N-1}\big(\Cat_{i+j+1,k}^{(0,c)}\big) = \begin{cases}
		(-1)^{\binom{k+1}{2}n}\, \tfrac{x^{(k+1)(n+1)}-x^{-(k+1)(n+1)}}{x^{k+1}-x^{-(k+1)}}, & \text{if $r=0$},\\[6pt]
		(-1)^{\binom{k+1}{2}n + \binom{k}{2}}\, \tfrac{x^{(k+1)(n+1)}-x^{-(k+1)(n+1)}}{x^{k+1}-x^{-(k+1)}}, & \text{if $r=k$},\\[6pt]
		0, & \text{otherwise}.
	\end{cases}
\end{align*}
Finally, according to the Binet-type formula \eqref{eq:Binet-F} for the polynomials $F_n$,
\begin{align*}
	F_n(c) = F_n(x+x^{-1}) = \frac{x^n - x^{-n}}{x - x^{-1}}.
\end{align*}
Thus, Theorem~\ref{th:b0-m1} is also true.

\section{Hankel determinants and orthogonal polynomials}\label{sec:orthogonal}

The evaluation of Hankel determinants usually builds on techniques from the theory of orthogonal polynomials. We say a family of \emph{monic} polynomials $(p_n(x))_{n\ge 0}$ with $p_n(x)$ of degree $n$ is \emph{orthogonal} if there is a linear functional $L$ on $\mathbb{C}[x]$ such that $L\big(p_m(x)p_n(x)\big) = \delta_{m,n} \kappa_n$ where $\delta_{m,n}$ is the Kronecker delta and $(\kappa_n)_{n\ge 0}$ is a fixed \emph{nonzero} sequence.

By a standard result of Stieltjes~\cite{Sti1894}, also more commonly referred to as the \emph{Favard theorem}~\cite{Fav1935}, we know that polynomials $(p_n(x))_{n\ge 0}$ are orthogonal if and only if they satisfy a three-term recursive relation. Namely, there exist sequences $(a_n)_{n\ge 0}$ and $(b_n)_{n\ge 1}$ with $b_n\ne 0$ such that $p_0(x)=1$, $p_1(x)=x+a_0$, and for $n\ge 1$,
\begin{align}\label{eq:3TermRec}
	p_{n+1}(x) = (x + a_n)p_n(x) - b_n p_{n-1}(x).
\end{align}
See \cite[p.~21, Theorem~12]{Kra1999}. Now we may combine the Jacobi continued fraction expression for the generating function of the moments $L(x^n)$ (see \cite[p.~20, Theorem~11]{Kra1999}) and a classic result of Heilermann~\cite{Hei1846} (see also \cite[p.~21, Theorem~13]{Kra1999}) to get the following result.

\begin{lemma}\label{le:det-general}
	Let the monic polynomials $(p_n(x))_{n\ge 0}$ be orthogonal associated with the linear functional $L$. Then,
	\begin{align}\label{eq:det-general}
		\underset{{0\le i,j\le N-1}}{\det}\big(L(x^{i+j})\big) = L(x^0)^{N} b_1^{N-1} b_2^{N-2} \cdots b_{N-2}^2 b_{N-1},
	\end{align}
	where the $b$'s are as in \eqref{eq:3TermRec}.
\end{lemma}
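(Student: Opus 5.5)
The plan is to reduce the Hankel determinant to the LU (or Cholesky-type) factorization of the moment matrix that orthogonality provides. Write $\mu_n := L(x^n)$ and $H_N := (\mu_{i+j})_{0\le i,j\le N-1}$. For $n\le N-1$ put $p_n(x)=\sum_{j\le n} c_{n,j}x^j$ with $c_{n,n}=1$, and collect these coefficients in the unit lower triangular matrix $C=(c_{n,j})_{0\le n,j\le N-1}$. Then
\begin{align*}
(C H_N C^{T})_{m,n}=\sum_{i,j} c_{m,i}\,\mu_{i+j}\,c_{n,j} = L\big(p_m(x)p_n(x)\big)=\delta_{m,n}\kappa_n .
\end{align*}
Since $\det C=1$, taking determinants gives $\det H_N=\kappa_0\kappa_1\cdots\kappa_{N-1}$. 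This step is a routine use of linearity of $L$.

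The remaining task is to express $\kappa_n$ through the recurrence coefficients. Since $p_0=1$, we have $\kappa_0=L(1)=L(x^0)$. For $n\ge1$, multiply \eqref{eq:3TermRec} (with index $n$ in place of $n+1$) by $p_{n-1}$... more conveniently, multiply the relation $p_{n+1}=(x+a_n)p_n-b_np_{n-1}$ by $p_{n-1}$ and apply $L$. Orthogonality gives $0=L(xp_np_{n-1})-b_n\kappa_{n-1}$, so $L(xp_np_{n-1})=b_n\kappa_{n-1}$.

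On the other hand, $xp_{n-1}=p_n+(\text{a combination of }p_0,\dots,p_{n-1})$, because the $p_j$ are monic of exact degree $j$ and so form a basis. Hence $L(xp_{n-1}p_n)=\kappa_n$. Comparing the two expressions yields $\kappa_n=b_n\kappa_{n-1}$, and therefore $\kappa_n=L(x^0)\,b_1b_2\cdots b_n$.

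Substituting into $\det H_N=\prod_{n=0}^{N-1}\kappa_n$ gives $L(x^0)^N\prod_{n=1}^{N-1}b_n^{N-n}$, which is exactly \eqref{eq:det-general}.

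There is no real obstacle here. The only point needing care is that the argument uses only the formal orthogonality $L(p_mp_n)=\delta_{m,n}\kappa_n$, with no positivity, and that $\kappa_n$ here is the same as in the definition. Equivalently, one could cite the Jacobi continued fraction together with Heilermann's theorem, as the paper indicates, but the direct factorization above is self-contained.
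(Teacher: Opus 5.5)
Your proof is correct, but it follows a different route from the paper. The paper gives no independent argument. It combines the Jacobi continued fraction expansion of the moment generating function $\sum_{n\ge0}L(x^n)t^n$, whose coefficients are the $a_n$ and $b_n$ of the three-term recurrence, with Heilermann's classical evaluation of Hankel determinants in terms of J-fraction coefficients. Both results are quoted from Krattenthaler's survey.

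You work directly with the functional instead. The unit lower-triangular coefficient matrix $C$ diagonalizes the moment matrix, $CH_NC^{\mathsf{T}}=\operatorname{diag}(\kappa_0,\dots,\kappa_{N-1})$, which gives $\det H_N=\kappa_0\cdots\kappa_{N-1}$. The norm ratio $\kappa_n=b_n\kappa_{n-1}$ then comes from pairing the recurrence with $p_{n-1}$ and noting that $L(xp_{n-1}p_n)=\kappa_n$. Both steps are sound, and the exponent count $b_j^{N-j}$ and the sign convention of the recurrence come out right.

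Your route is self-contained and uses very little: only linearity of $L$ and formal orthogonality, with no positivity and no continued fractions or formal power series. Once the recurrence is in hand, you never divide by any $\kappa_n$ or $b_n$. The paper's citation is a one-liner that places the lemma in the classical J-fraction and weighted-Motzkin-path framework, which fits the combinatorial viewpoint of Section 2. The cost is that it imports two external theorems rather than proving the identity.
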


To move on from the Hankel determinants for the moments $L(x^n)$ to the shifted ones, or to even more general cases, we need a result first discovered by Br\'ezin and Hikami~\cite{BH2000}, the history of which was surveyed by Krattenthaler~\cite{Kra2023}.

Let $x_1,\ldots,x_d$ be indeterminates. Then \cite[p.~598, Theorem~1]{Kra2023} asserts that
\begin{align}
	\frac{\underset{{0\le i,j\le N-1}}{\det}\big(L(x^{i+j}\prod_{l=1}^d (x-x_i))\big)}{\underset{{0\le i,j\le N-1}}{\det}\big(L(x^{i+j})\big)} = (-1)^{dN}\, \frac{\underset{{1\le i,j\le d}}{\det} \big(p_{N+i-1}(x_j)\big)}{\prod\limits_{1\le i<j\le d}(x_j-x_i)}.
\end{align}
Although it seems that the right-hand side of the above relation has a zero denominator if two of the $x_i$'s are identical, Krattenthaler~\cite[p.~611]{Kra2023} argued that it is indeed a \emph{polynomial} in $x_1,\ldots,x_d$ because the denominator divides the numerator. Furthermore, with repeated $x_i$'s, this ratio can be evaluated by \cite[p.~611, Proposition~5]{Kra2023}. In our context, we need the following specialization.

\begin{lemma}[\cite{Kra2023}, Theorem~1 + Proposition~5]\label{le:Kra}
	Let the monic polynomials $(p_n(x))_{n\ge 0}$ be orthogonal associated with the linear functional $L$. Let $x_1,\ldots,x_k$ be nonzero and pairwise distinct and put
	\begin{align*}
		u(x) := (x-x_1)\cdots (x-x_k).
	\end{align*}
	Then,
	\begin{align}\label{eq:Kra}
		\frac{\underset{{0\le i,j\le N-1}}{\det}\big(L(x^{i+j+m}u(x))\big)}{\underset{{0\le i,j\le N-1}}{\det}\big(L(x^{i+j})\big)} = (-1)^{(k+m)N}\, \frac{\det(P,P')}{u(0)^m\prod\limits_{1\le i<j\le k}(x_j-x_i)},
	\end{align}
	where the block matrix $(P,P')$ is $(k+m)\times (k+m)$, given by
	\begin{align*}
		P := \big(p_{N+i}(x_j)\big)_{\substack{0\le i\le k+m-1\\1\le j\le k}},\qquad P' := \big(\tfrac{\ddd^l}{\ddd x^l}\big\vert_{x\mapsto 0} \tfrac{p_{N+i}(x)}{l!}\big)_{\substack{0\le i\le k+m-1\\0\le l\le m-1}}.
	\end{align*}
\end{lemma}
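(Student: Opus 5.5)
The plan is to obtain \eqref{eq:Kra} as a confluent limit of the Brézin--Hikami identity quoted just before the lemma (\cite[Theorem~1]{Kra2023}), taken with $d=k+m$ variables. Introduce auxiliary indeterminates $y_1,\ldots,y_{k+m}$. Set $y_j=x_j$ for $1\le j\le k$, and let $y_{k+1},\ldots,y_{k+m}$ be free for now; at the end they will all be sent to $0$. Since
\[
x^m u(x)=\prod_{l=1}^{m}(x-0)\prod_{j=1}^{k}(x-x_j),
\]
we have $L\big(x^{i+j+m}u(x)\big)=\lim L\big(x^{i+j}\prod_{l=1}^{k+m}(x-y_l)\big)$ as $y_{k+1},\ldots,y_{k+m}\to 0$. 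Both sides of the Brézin--Hikami identity are polynomials in the $y$'s; for the right-hand side this is the divisibility remark from \cite{Kra2023}. So we may pass to the limit on each side separately.

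On the right-hand side, the sign $(-1)^{dN}$ is already $(-1)^{(k+m)N}$. Reindexing $i\mapsto i+1$ turns the numerator matrix into $\big(p_{N+i}(y_j)\big)_{0\le i\le k+m-1,\,1\le j\le k+m}$. Split the Vandermonde product $\prod_{i<j}(y_j-y_i)$ into three factors:
\begin{itemize}
\item the product over pairs with both indices at most $k$, which is $\prod_{1\le i<j\le k}(x_j-x_i)$;
\item the cross factor $\prod_{j\le k}\prod_{l\le m}(y_{k+l}-x_j)$, which tends to $\prod_{j}(-x_j)^m=u(0)^m$;
\item the factor $V_m:=\prod_{1\le l<l'\le m}(y_{k+l'}-y_{k+l})$.
\end{itemize}
The cross factor has the right sign precisely because the $y_{k+l}$ are ordered after the $x_j$. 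The hypothesis that the $x_j$ are nonzero guarantees $u(0)\neq 0$, so it can be divided out. The hypothesis that the $x_j$ are pairwise distinct guarantees that the first factor is nonzero.

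It remains to show that
\[
\frac{\det\big(p_{N+i}(y_j)\big)}{V_m}\;\longrightarrow\;\det(P,P')
\]
as $y_{k+1},\ldots,y_{k+m}\to0$, with the first $k$ columns fixed. This is the only real step and the one I expect to need care. I would use the standard divided-difference argument. Column operations (subtracting combinations of columns with Lagrange-type coefficients) replace the columns $f(y_{k+1}),\ldots,f(y_{k+m})$ by the successive divided differences $f[y_{k+1}],\,f[y_{k+1},y_{k+2}],\ldots,f[y_{k+1},\ldots,y_{k+m}]$, where $f$ denotes the column vector $\big(p_{N+i}\big)_i$. These operations extract exactly the factor $V_m$. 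Since each entry is a polynomial, the divided difference $f[y_{k+1},\ldots,y_{k+l+1}]$ tends to $f^{(l)}(0)/l!$ as all its nodes tend to $0$. This yields exactly the block $P'$.

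Equivalently, one can simply invoke \cite[Proposition~5]{Kra2023}, which states this confluent evaluation. Because everything is polynomial, the limit is legitimate, and combining the three factors gives \eqref{eq:Kra}.
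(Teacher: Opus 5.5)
Your proposal is correct and is essentially the paper's argument. The paper proves the lemma in one line by specializing \cite[Proposition~5]{Kra2023}, taking $y_i=-x_i$ and $m_i=1$ for $1\le i\le k$, and $y_{k+1}=0$ with $m_{k+1}=m$. You instead derive that confluent case yourself from Theorem~1 via the divided-difference limit, which correctly supplies both the factor $u(0)^m$ and the block $P'$.
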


\begin{proof}
	In \cite[p.~611, Proposition~5]{Kra2023}, we put $y_i = -x_i$ and $m_i = 1$ for $1\le i\le k$, followed by $y_{k+1} = 0$ and $m_{k+1} = m$.
\end{proof}

We record two corollaries to simplify the analysis in later sections. In what follows, we write $[a,b] := \{n\in \mathbb{Z}: a\le n\le b\}$.

\begin{corollary}\label{coro:Kra-1}
	Under the assumptions in Lemma~\ref{le:Kra}, suppose there is a subset $S \subset [0,k+m-1]$ of cardinality $m+1$ such that for every $s\in S$, we can find a certain $\tau_s\in [0,k+m-1]\backslash S$ and a constant $c_s \in \mathbb{C}$ with
	\begin{align*}
		p_{N+s}(x_j) = c_s p_{N+\tau_s}(x_j)
	\end{align*}
	for all $j\in [1,k]$. Then,
	\begin{align}
		\underset{{0\le i,j\le N-1}}{\det}\big(L(x^{i+j+m}u(x))\big) = 0.
	\end{align}
\end{corollary}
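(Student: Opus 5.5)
The plan is to apply Lemma~\ref{le:Kra} and show that the determinant $\det(P,P')$ on its right-hand side vanishes. We may assume $\det_{0\le i,j\le N-1}(L(x^{i+j}))$ is finite; by Lemma~\ref{le:det-general} and $b_n\ne 0$ it is nonzero, though we only need the identity in Lemma~\ref{le:Kra}. It then suffices to show that the $(k+m)\times(k+m)$ block matrix $(P,P')$ has rank at most $k+m-1$. Its rows are indexed by $i\in[0,k+m-1]$. Row $i$ consists of the $k$ entries $p_{N+i}(x_1),\ldots,p_{N+i}(x_k)$ in the $P$ part, followed by the $m$ Taylor coefficients of $p_{N+i}$ at $0$ in the $P'$ part.

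Split the columns into the $P$-block ($k$ columns) and the $P'$-block ($m$ columns). For each $s\in S$, subtract $c_s$ times row $\tau_s$ from row $s$. Since $\tau_s\notin S$, the rows $\tau_s$ being subtracted are never themselves modified. These operations are therefore a product of elementary operations and leave the determinant unchanged. By hypothesis, every modified row $s\in S$ now has zeros in all $k$ columns of the $P$-block. Its only possibly nonzero entries lie in the $m$ columns of the $P'$-block.

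We now have $|S|=m+1$ rows supported on only $m$ columns. These $m+1$ rows span a space of dimension at most $m$, so they are linearly dependent and the determinant is zero. Equivalently, expand by a generalized Laplace expansion along those $m+1$ rows: every $(m+1)\times(m+1)$ minor taken from them must use at least one $P$-column, which is zero. Hence $\det(P,P')=0$. Multiplying by the nonzero-denominator prefactor and by $\det_{0\le i,j\le N-1}(L(x^{i+j}))$ in \eqref{eq:Kra} gives the claimed vanishing.

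The only delicate point is the requirement $\tau_s\notin S$, which guarantees that the row operations are legitimate and do not interfere with one another. It is already built into the hypothesis, so I expect no real obstacle. One should also note that $u(0)\ne0$ because the $x_j$ are nonzero, and that the Vandermonde factor is nonzero because the $x_j$ are distinct, so the right-hand side of \eqref{eq:Kra} is a genuine finite multiple of $\det(P,P')$.
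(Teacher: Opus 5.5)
Your proof is correct and is essentially the paper's argument. The paper notes that the hypothesis makes the $m+1$ rows of $P$ indexed by $S$ multiples of rows outside $S$, so $P$ has rank at most $k-1$ and $(P,P')$ has rank at most $k+m-1$. Your row reduction, which leaves $m+1$ rows supported on only the $m$ columns of $P'$, establishes the same rank bound explicitly, and the vanishing then follows from \eqref{eq:Kra} exactly as you say.
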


\begin{remark}
	By taking $c_s = 0$ and allowing $\tau_s$ to be arbitrary (even if the complement $[0,k+m-1]\backslash S$ is empty), the condition in Corollary~\ref{coro:Kra-1} covers the case where $p_{N+s}(x_j) = 0$ for all $j\in [1,k]$.
\end{remark}

\begin{proof}
	Note that in the assumed context, the matrix $P$ in Lemma~\ref{le:Kra} has rank at most $k-1$, and hence the block matrix $(P,P')$ has rank at most $k+m-1$. This implies that $\det(P,P')$ vanishes, and so does our desired determinant by invoking the master relation \eqref{eq:Kra}.
\end{proof}

\begin{corollary}\label{coro:Kra-2}
	Under the assumptions in Lemma~\ref{le:Kra}, suppose there is a permutation $\sigma\in \mathfrak{S}_{k+m}$ such that we can find mappings $f:[0,k-1]\to \mathbb{C}$ and $g:[1,k]\to \mathbb{C}$ with
	\begin{align*}
		p_{N+\sigma(i)}(x_j) = f(i)g(j) p_{i}(x_j)
	\end{align*}
	for all $i\in [0,k-1]$ and $j\in [1,k]$, and such that for every $s\in [k,k+m-1]$, we can find a certain $\tau_s\in [0,k-1]$ and a constant $c_s \in \mathbb{C}$ with
	\begin{align*}
		p_{N+\sigma(s)}(x_j) = c_s p_{N+\sigma(\tau_s)}(x_j)
	\end{align*}
	for all $j\in [1,k]$. Then,
	\begin{align}\label{eq:Kra-2}
		\frac{\underset{{0\le i,j\le N-1}}{\det}\big(L(x^{i+j+m}u(x))\big)}{\underset{{0\le i,j\le N-1}}{\det}\big(L(x^{i+j})\big)}& = \sgn(\sigma) (-1)^{(k+m)N} \frac{1}{u(0)^m} \prod_{i=0}^{k-1}f(i) \prod_{j=1}^k g(j)\notag\\
		&\times \underset{{\substack{k\le s\le k+m-1\\0\le l\le m-1}}}{\det} \left(\frac{\ddd^l}{\ddd x^l}\bigg\vert_{x\mapsto 0} \!\!\frac{p_{N+\sigma(s)}(x) - c_s p_{N+\sigma(\tau_s)}(x)}{l!}\right).
	\end{align}
\end{corollary}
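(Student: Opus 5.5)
The plan is to start from the master relation \eqref{eq:Kra} of Lemma~\ref{le:Kra}. This reduces the statement to evaluating the $(k+m)\times(k+m)$ determinant $\det(P,P')$.

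\textbf{Step 1: permute the rows.} First reorder the rows of $(P,P')$ by $\sigma$, so that row $i$ becomes row $\sigma(i)$. This multiplies the determinant by $\sgn(\sigma)$. The rows are now indexed by $s\in[0,k+m-1]$ and carry the polynomials $p_{N+\sigma(s)}$.

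\textbf{Step 2: clear the lower part of $P$.} For each $s\in[k,k+m-1]$, subtract $c_s$ times row $\tau_s$ from row $s$. Since $\tau_s\in[0,k-1]$, we only subtract multiples of rows lying in the top block. So these operations do not interfere with one another, and the determinant is unchanged.

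After this, the entries of row $s$ in the $P$-columns are
\[
p_{N+\sigma(s)}(x_j)-c_s p_{N+\sigma(\tau_s)}(x_j)=0
\]
by hypothesis. In the $P'$-columns, the entries become the $l$-th Taylor coefficients at $0$ of $p_{N+\sigma(s)}(x)-c_s p_{N+\sigma(\tau_s)}(x)$, because taking Taylor coefficients is linear.

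\textbf{Step 3: use the block triangular form.} The matrix is now block lower triangular: its top-left block is $k\times k$ and its bottom-right block is $m\times m$. The determinant is therefore the product of
\begin{itemize}
\item the top-left block $\big(p_{N+\sigma(i)}(x_j)\big)_{0\le i\le k-1,\,1\le j\le k}$, and
\item the bottom-right block, which is exactly the $m\times m$ determinant appearing in \eqref{eq:Kra-2}.
\end{itemize}

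\textbf{Step 4: evaluate the top-left block.} By hypothesis its $(i,j)$ entry is $f(i)g(j)p_i(x_j)$. Pulling $f(i)$ out of each row and $g(j)$ out of each column leaves $\det\big(p_i(x_j)\big)_{0\le i\le k-1,\,1\le j\le k}$.

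The $p_i$ are monic of degree $i$. Row operations turn $p_i(x_j)$ into $x_j^i$, so this determinant equals the Vandermonde product $\prod_{1\le i<j\le k}(x_j-x_i)$. That product cancels the same product in the denominator of \eqref{eq:Kra}.

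\textbf{Step 5: assemble.} Collecting the factors $(-1)^{(k+m)N}$, $\sgn(\sigma)$, $u(0)^{-m}$, $\prod_i f(i)$ and $\prod_j g(j)$ gives \eqref{eq:Kra-2}.

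The only delicate point is the sign and orientation in the Vandermonde step, namely confirming that $\det(x_j^{i})$ with rows indexed by $i$ equals $\prod_{i<j}(x_j-x_i)$ and not its negative. This is the standard convention, and it matches the denominator in \eqref{eq:Kra}. It is also worth checking that the rows of $(P,P')$ are indexed exactly as the rows we permute, so that $\sgn(\sigma)$ (rather than $\sgn(\sigma^{-1})$, which is the same anyway) is the correct sign.
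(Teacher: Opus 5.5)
Your proposal is correct and follows the paper's proof essentially step for step: permute the rows by $\sigma$, subtract $c_s$ times row $\tau_s$ from each row $s\in[k,k+m-1]$, factor the block-triangular determinant, and evaluate the top-left block by pulling out $f(i)$ and $g(j)$ and reducing $\det\big(p_i(x_j)\big)$ to the Vandermonde product. One cosmetic slip: after Step~2 the zero block is the bottom-left one, so the matrix is block \emph{upper} triangular rather than lower, which does not affect the determinant.
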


\begin{proof}
	Interchanging the rows of the block matrix $(P,P')$ in Lemma~\ref{le:Kra}, we have
	\begin{align*}
		\det(P,P') = \sgn(\sigma) \det(P_\sigma,P'_\sigma),
	\end{align*}
	where
	\begin{align*}
		P_\sigma := \big(p_{N+\sigma(i)}(x_j)\big)_{\substack{0\le i\le k+m-1\\1\le j\le k}},\qquad P'_\sigma := \big(\tfrac{\ddd^l}{\ddd x^l}\big\vert_{x\mapsto 0} \tfrac{p_{N+\sigma(i)}(x)}{l!}\big)_{\substack{0\le i\le k+m-1\\0\le l\le m-1}}.
	\end{align*}
	Now for every $s\in [k,k+m-1]$, we subtract $c_s$ times the $\tau_s$-th row of $(P_\sigma,P'_\sigma)$ from the $s$-th row. Then we arrive at a block matrix
	\begin{align*}
		\begin{pmatrix}
			A & B\\
			0 & D
		\end{pmatrix},
	\end{align*}
	where in particular, $A$ is a $k\times k$ square matrix
	\begin{align*}
		A = \big(f(i)g(j)p_i(x_j)\big)_{\substack{0\le i\le k-1\\1\le j\le k}},
	\end{align*}
	and $D$ is an $m\times m$ square matrix
	\begin{align*}
		D = \left(\frac{\ddd^l}{\ddd x^l}\bigg\vert_{x\mapsto 0} \!\!\frac{p_{N+\sigma(s)}(x) - c_s p_{N+\sigma(\tau_s)}(x)}{l!}\right)_{\substack{k\le s\le k+m-1\\0\le l\le m-1}}.
	\end{align*}
	Moreover,
	\begin{align*}
		\det(P,P') = \sgn(\sigma) \det(P_\sigma,P'_\sigma) = \sgn(\sigma) \det\begin{pmatrix}
			A & B\\
			0 & D
		\end{pmatrix} = \sgn(\sigma) \det A \det D.
	\end{align*}
	Note that $\det D$ is exactly the determinant on the right-hand side of \eqref{eq:Kra-2}. For $\det A$, we have
	\begin{align*}
		\det A = \prod_{i=0}^{k-1}f(i) \prod_{j=1}^k g(j) \det_{\substack{0\le i\le k-1\\1\le j\le k}} \big(p_i(x_j)\big).
	\end{align*}
	Since $p_i(x)$ is monic and of degree $i$ for every $i$, we write
	\begin{align*}
		p_i(x) := p_{i,0} + p_{i,1}x + \cdots + p_{i,i-1}x^{i-1} + x^i.
	\end{align*}
	Thus,
	\begin{align*}
		&\begin{pmatrix}
			p_0(x_1) & p_0(x_2) & \cdots & p_0(x_k)\\
			p_1(x_1) & p_1(x_2) & \cdots & p_1(x_k)\\
			\vdots & \vdots & \ddots & \vdots\\
			p_{k-1}(x_1) & p_{k-1}(x_2) & \cdots & p_{k-1}(x_k)\\
		\end{pmatrix}\\
		&\qquad\qquad =
		\begin{pmatrix}
		1 & 0 & \cdots & 0\\
		p_{1,0} & 1 & \cdots & 0\\
		\vdots & \vdots & \ddots & \vdots\\
		p_{k-1,0} & p_{k-1,1} & \cdots & 1\\
		\end{pmatrix}
		\begin{pmatrix}
			1 & 1 & \cdots & 1\\
			x_1 & x_2 & \cdots & x_k\\
			\vdots & \vdots & \ddots & \vdots\\
			x_1^{k-1} & x_2^{k-1} & \cdots & x_k^{k-1}\\
		\end{pmatrix}.
	\end{align*}
	The determinant of the lower triangular matrix in the above is $1$, and the determinant of the Vandermonde matrix, according to the standard evaluation \cite[p.~5, eq.~(2.1)]{Kra1999}, equals $\prod_{1\le i<j\le k}(x_j-x_i)$. Hence,
	\begin{align*}
		\det A = \prod_{1\le i<j\le k}(x_j-x_i).
	\end{align*}
	Finally, our claimed result follows by recalling \eqref{eq:Kra}.
\end{proof}

\section{The polynomials $F_n(x)$ and $L_n(x)$}\label{sec:F-L}

Before proceeding with the evaluation of our Hankel determinants, we collect some basic properties of the polynomials $F_n(x)$ and $L_n(x)$ for later use.

First, the following \emph{Binet-type formulas} are well-known.

\begin{lemma}
	We have
	\begin{align}\label{eq:Binet-F}
		F_n(z+z^{-1}) = z^{-n+1} + z^{-n+3} + \cdots + z^{n-3} + z^{n-1} = \frac{z^n - z^{-n}}{z - z^{-1}},
	\end{align}
	and
	\begin{align}\label{eq:Binet-L}
		L_n(z+z^{-1}) = z^n + z^{-n}.
	\end{align}
\end{lemma}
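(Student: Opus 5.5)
We argue by induction on $n$ directly from the recurrence \eqref{eq:F-rec} and \eqref{eq:L-rec}, with $x=z+z^{-1}$. This is legitimate because both sides are Laurent polynomials in $z$, so an identity of rational functions in $z$ is enough.

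For \eqref{eq:Binet-F}, write $G_n(z):=z^{-n+1}+z^{-n+3}+\cdots+z^{n-1}$ for $n\ge1$ and set $G_0:=0$. The geometric-sum identity $G_n=(z^n-z^{-n})/(z-z^{-1})$ is immediate: multiply $G_n$ by $z-z^{-1}$ and the sum telescopes. The initial values are $G_0=0=F_0$ and $G_1=1=F_1$. It then suffices to check
\begin{align*}
	(z+z^{-1})\,(z^{n-1}-z^{-n+1}) - (z^{n-2}-z^{-n+2}) = z^n - z^{-n}.
\end{align*}
To see this, expand the product on the left to get $z^n+z^{n-2}-z^{-n+2}-z^{-n}$, and the middle terms cancel against the subtracted term. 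Dividing by $z-z^{-1}$ shows that $G_n$ satisfies the same three-term recurrence as $F_n(z+z^{-1})$, so induction gives $F_n(z+z^{-1})=G_n$.

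For \eqref{eq:Binet-L}, the initial values are $L_0=2=z^0+z^0$ and $L_1=z+z^{-1}$. The same expansion gives
\begin{align*}
	(z+z^{-1})(z^{n-1}+z^{1-n})-(z^{n-2}+z^{2-n})=z^n+z^{-n},
\end{align*}
so $z^n+z^{-n}$ satisfies the recurrence \eqref{eq:L-rec} and induction closes the argument. There is no real obstacle: the only step with content is this one-line cancellation in the recurrence check, and everything else is bookkeeping.
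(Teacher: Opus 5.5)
Your induction is correct: the cancellation you display for $F_n$ and the one for $L_n$ both check out, and dividing by the nonzero Laurent polynomial $z-z^{-1}$ is legitimate. The paper gives no proof of this lemma at all (it only calls the Binet-type formulas well known), and your verification from the defining recurrences \eqref{eq:F-rec} and \eqref{eq:L-rec} is the standard argument one would supply.
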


The two Binet-type formulas have several implications that play an important role in our analysis. Here we always make the parametrization $x:=z+z^{-1}$.

\begin{proposition}
	We have
	\begin{align}
		F_n(-x) &= (-1)^{n-1} F(x),\label{eq:F(-x)}\\
		L_n(-x) &= (-1)^{n} L(x).\label{eq:L(-x)}
	\end{align}
\end{proposition}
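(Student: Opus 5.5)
Both identities follow directly from the Binet-type formulas \eqref{eq:Binet-F} and \eqref{eq:Binet-L} under the parametrization $x=z+z^{-1}$. The point is that replacing $z$ by $-z$ sends $x=z+z^{-1}$ to $-z-z^{-1}=-x$. Substituting $-z$ into the Binet formulas therefore evaluates $F_n$ and $L_n$ at $-x$.

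For \eqref{eq:F(-x)}, I would write
\begin{align*}
	F_n(-x) = F_n\big((-z)+(-z)^{-1}\big) = \frac{(-z)^n-(-z)^{-n}}{(-z)-(-z)^{-1}} = \frac{(-1)^n\,(z^n-z^{-n})}{-(z-z^{-1})} = (-1)^{n-1}F_n(x).
\end{align*}
For \eqref{eq:L(-x)}, similarly,
\begin{align*}
	L_n(-x)=(-z)^n+(-z)^{-n}=(-1)^n(z^n+z^{-n})=(-1)^nL_n(x).
\end{align*}
The statement writes $F(x)$ and $L(x)$ on the right-hand sides; these are to be read as $F_n(x)$ and $L_n(x)$.

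The only point needing justification is that an identity proved in the variable $z$ is in fact a polynomial identity in $x$. Both sides are polynomials in $x$, and they agree for all $x$ of the form $z+z^{-1}$ with $z\ne0,\pm1$, where the Binet denominator does not vanish. Every complex $x\neq\pm2$ has this form, so the two polynomials agree on an infinite set and hence coincide.

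Alternatively, I would give a direct induction on $n$ from the recurrences \eqref{eq:F-rec} and \eqref{eq:L-rec}. The base cases hold: $F_0=0$, $F_1=1$, and $L_0=2$, $L_1(-x)=-x$. For the inductive step, $F_n(-x)=-xF_{n-1}(-x)-F_{n-2}(-x)$, and the signs $(-1)^{n-2}$ and $(-1)^{n-3}$ coming from the hypothesis combine to give $(-1)^{n-1}F_n(x)$. The Lucas case is identical. I do not expect any real obstacle; the only care required is the sign bookkeeping.
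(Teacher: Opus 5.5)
Your proposal is correct and follows the paper's proof: the paper likewise just replaces $z$ by $-z$ in the Binet-type formulas \eqref{eq:Binet-F} and \eqref{eq:Binet-L}. Your extra remarks are valid details the paper leaves implicit, namely reading $F(x)$, $L(x)$ as $F_n(x)$, $L_n(x)$, and the infinite-set argument showing that the identity in $z$ is a polynomial identity in $x$.
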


\begin{proof}
	We replace $z$ with $-z$ in \eqref{eq:Binet-F} and \eqref{eq:Binet-L}.
\end{proof}

\begin{proposition}
	We have, for $0\le i\le n$,
	\begin{align}\label{eq:F_n+-i}
		F_{n+i}(x) + F_{n-i}(x) = F_n(x) L_i(x).
	\end{align}
\end{proposition}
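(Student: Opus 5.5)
The plan is to verify the identity $F_{n+i}(x)+F_{n-i}(x)=F_n(x)L_i(x)$ through the parametrization $x=z+z^{-1}$. Both sides are polynomials in $x$. The map $z\mapsto z+z^{-1}$ takes infinitely many values as $z$ ranges over, say, the reals greater than $1$. So it is enough to check the identity after substituting $x=z+z^{-1}$, treating it as an identity of Laurent polynomials in $z$ (or of rational functions in $z$ with $z\neq\pm1$).

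By the Binet-type formula \eqref{eq:Binet-F}, the left-hand side multiplied by $z-z^{-1}$ equals
\begin{align*}
\big(z^{n+i}-z^{-n-i}\big)+\big(z^{n-i}-z^{-n+i}\big).
\end{align*}
By \eqref{eq:Binet-F} and \eqref{eq:Binet-L}, the right-hand side multiplied by $z-z^{-1}$ equals
\begin{align*}
\big(z^{n}-z^{-n}\big)\big(z^{i}+z^{-i}\big)=z^{n+i}+z^{n-i}-z^{-n+i}-z^{-n-i}.
\end{align*}
These two expressions agree term by term. Dividing by the nonzero factor $z-z^{-1}$ then gives the identity.

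The hypothesis $0\le i\le n$ is needed only so that $F_{n-i}$ has a nonnegative index, since \eqref{eq:F-rec} defines the sequence only for nonnegative indices. The Binet formula \eqref{eq:Binet-F} holds for every $n\ge0$, including the case $F_0=0$, which corresponds to $i=n$.

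There is no real obstacle in this argument. The one point to state carefully is the passage from an identity in $z$ back to a polynomial identity in $x$, and the infinitude argument above handles it. As an alternative route that avoids the parametrization entirely, one could induct on $i$ using \eqref{eq:F-rec} and \eqref{eq:L-rec}.
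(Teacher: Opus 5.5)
Your proof is correct and matches the paper's argument: substitute $x=z+z^{-1}$, apply the Binet-type formulas \eqref{eq:Binet-F} and \eqref{eq:Binet-L}, and use the factorization $(z^{n+i}-z^{-n-i})+(z^{n-i}-z^{-n+i})=(z^n-z^{-n})(z^i+z^{-i})$. Your extra remark justifying the passage from the identity in $z$ back to a polynomial identity in $x$ is a detail the paper leaves implicit.
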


\begin{proof}
	We have
	\begin{align*}
		F_{n+i}(x) + F_{n-i}(x) &= \frac{z^{n+i} - z^{-n-i}}{z - z^{-1}} + \frac{z^{n-i} - z^{-n+i}}{z - z^{-1}}\\
		& = \frac{z^n - z^{-n}}{z - z^{-1}} (z^i + z^{-i})\\
		& = F_n(x) L_i(x),
	\end{align*}
	as claimed.
\end{proof}

\begin{proposition}
	We have
	\begin{align}\label{eq:F-sum=L-diff}
		(x-2)\big(F_{n+1}(x) + F_n(x)\big) = L_{n+1}(x) - L_n(x).
	\end{align}
\end{proposition}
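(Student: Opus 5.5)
We will verify the identity through the Binet-type parametrization $x=z+z^{-1}$, exactly as in the proof of \eqref{eq:F_n+-i}. Both sides are polynomials in $x$, so it suffices to check equality after the substitution. This holds as an identity of Laurent polynomials in $z$, valid for all $z\neq 0,\pm1$, which is enough.

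By \eqref{eq:Binet-F}, the left-hand side becomes
\begin{align*}
(z+z^{-1}-2)\,\frac{z^{n+1}-z^{-n-1}+z^{n}-z^{-n}}{z-z^{-1}}.
\end{align*}
We simplify using the factorizations
\begin{align*}
z+z^{-1}-2 &= z^{-1}(z-1)^2, &
z-z^{-1} &= z^{-1}(z-1)(z+1),
\end{align*}
which give
\begin{align*}
\frac{z+z^{-1}-2}{z-z^{-1}}=\frac{z-1}{z+1}.
\end{align*}
The numerator factors as
\begin{align*}
z^{n}(z+1)-z^{-n-1}(1+z)=(z+1)\big(z^n-z^{-n-1}\big).
\end{align*}
Hence the left-hand side equals
\begin{align*}
(z-1)\big(z^n-z^{-n-1}\big)=z^{n+1}-z^n-z^{-n}+z^{-n-1}.
\end{align*}

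By \eqref{eq:Binet-L}, the right-hand side is
\begin{align*}
(z^{n+1}+z^{-n-1})-(z^n+z^{-n}),
\end{align*}
which is the same expression, so the identity holds.

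There is no real obstacle here. The only step needing care is the factor cancellation through $z\pm1$, which is legitimate because we compare rational functions of $z$. As an alternative, one could argue by induction on $n$: the base cases $n=0$ and $n=1$ check directly, and both sides satisfy the recurrence $Y_{n+1}=xY_n-Y_{n-1}$, so linearity of that recurrence carries the identity to all $n$.
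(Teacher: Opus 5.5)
Your proof is correct and follows the same route as the paper: substitute $x=z+z^{-1}$, apply the Binet-type formulas \eqref{eq:Binet-F} and \eqref{eq:Binet-L}, and simplify both sides to $(z^{n+1}+z^{-n-1})-(z^n+z^{-n})$. Your explicit factorization through $(z-1)/(z+1)$ spells out the cancellation that the paper leaves implicit.
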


\begin{proof}
	We have
	\begin{align*}
		(x-2)\big(F_{n+1}(x) + F_n(x)\big) &= (z+z^{-1}-2)\left(\frac{z^{n+1} - z^{-n-1}}{z - z^{-1}}+\frac{z^n - z^{-n}}{z - z^{-1}}\right)\\
		&= (z^{n+1} + z^{-n-1}) - (z^{n} + z^{-n})\\
		&= L_{n+1}(x) - L_n(x),
	\end{align*}
	as claimed.
\end{proof}

\begin{proposition}
	We have
	\begin{align}\label{eq:F-L}
		F_{st}(x) = F_s(x) F_t\big(L_s(x)\big).
	\end{align}
\end{proposition}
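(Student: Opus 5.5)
The plan is to verify the identity through the Binet-type parametrization $x = z+z^{-1}$, which turns both sides into explicit Laurent expressions in $z$.

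\emph{Step 1.} By \eqref{eq:Binet-L}, $L_s(x) = z^s + z^{-s}$. Setting $w := z^s$, this reads $L_s(x) = w + w^{-1}$. So $L_s(x)$ is itself of the form required to apply \eqref{eq:Binet-F} with the variable $w$.

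\emph{Step 2.} Applying \eqref{eq:Binet-F} in the variable $w$ gives
\begin{align*}
F_t\big(L_s(x)\big) = \frac{w^t - w^{-t}}{w - w^{-1}} = \frac{z^{st}-z^{-st}}{z^s - z^{-s}}.
\end{align*}
Multiplying by $F_s(x) = (z^s - z^{-s})/(z-z^{-1})$, the factor $z^s - z^{-s}$ cancels. The product is $(z^{st}-z^{-st})/(z-z^{-1})$, which equals $F_{st}(x)$ by \eqref{eq:Binet-F} once more.

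\emph{Step 3 (rigor).} The computation is an identity of rational functions in $z$, valid for generic $z$. Both sides of \eqref{eq:F-L} are polynomials in $x$, and every complex $x$ is attained as $z+z^{-1}$. Hence two polynomials in $x$ that agree as functions of $z$ agree identically. The only point needing care is the division by $z^s - z^{-s}$ and $z - z^{-1}$. These vanish only at finitely many $z$, so the identity holds off a finite set and therefore everywhere by polynomiality.

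Alternatively, one can avoid division altogether by using the first (sum) form of \eqref{eq:Binet-F}, namely $F_t(w+w^{-1}) = \sum_{j} w^{t-1-2j}$. The product $F_s(x)F_t(L_s(x))$ then becomes a product of two geometric-type sums in $z$, which concatenate to $\sum_{i=0}^{st-1} z^{st-1-2i}$. I do not expect a genuine obstacle; the edge cases $s=0$ or $t=0$ give $0=0$ trivially.
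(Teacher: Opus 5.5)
Your proposal is correct and is essentially the paper's proof: substitute $x=z+z^{-1}$, use $L_s(x)=z^s+z^{-s}$ as the argument of the Binet formula for $F_t$, and cancel the factor $z^s-z^{-s}$. Your Step 3 remarks (agreement off a finite set implies polynomial identity, the $s=0$ or $t=0$ cases) and the division-free version via $i+sj$ running over $[0,st-1]$ are valid; they make explicit a justification the paper leaves unstated.
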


\begin{proof}
	We have
	\begin{align*}
		F_s(x) F_t\big(L_s(x)\big) = F_s(z+z^{-1}) F_t(z^s+z^{-s}) = \frac{z^s - z^{-s}}{z - z^{-1}}\frac{z^{st} - z^{-st}}{z^s - z^{-s}} = F_{st}(x),
	\end{align*}
	as claimed.
\end{proof}

\begin{proposition}
	Define
	\begin{align}\label{eq:D-def}
		D_n(x) := F_{n+1}(x) - F_n(x).
	\end{align}
	We have
	\begin{align}\label{eq:D-1}
		D_{(2s+1)t+s}(x) = D_s(x) D_{t}\big(L_{2s+1}(x)\big).
	\end{align}
	Also, for $0\le i\le (2s+1)t+2s$,
	\begin{align}\label{eq:D-3}
		&D_{(2s+1)t+2s+1+i}(x) - D_{(2s+1)t+2s-i}(x)\notag\\
		&\qquad = D_s(x) \big(L_{s+1}(x) - L_s(x)\big) F_{t+1}\big(L_{2s+1}(x)\big) \big(F_{i+1}(x)+F_i(x)\big).
	\end{align}
\end{proposition}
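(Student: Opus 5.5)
We use the Binet parametrization $x=z+z^{-1}$ throughout, so that by \eqref{eq:Binet-F} and \eqref{eq:Binet-L} both identities become identities between Laurent polynomials in $z$. The first step is to put $D_n$ in closed form:
\begin{align*}
	D_n(x)=\frac{z^{n+1}-z^{-n-1}-z^{n}+z^{-n}}{z-z^{-1}}
	=\frac{(z^{1/2})^{2n+1}+(z^{-1/2})^{2n+1}}{z^{1/2}+z^{-1/2}},
\end{align*}
where we used $z-z^{-1}=(z^{1/2}-z^{-1/2})(z^{1/2}+z^{-1/2})$ and $z^{n+1}-z^{n}+z^{-n}-z^{-n-1}=(z^{1/2}-z^{-1/2})\big(z^{n+1/2}+z^{-n-1/2}\big)$. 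To avoid half-integer powers we will instead set $z=w^2$. Then
\begin{align*}
	D_n(x)=\frac{w^{2n+1}+w^{-2n-1}}{w+w^{-1}}.
\end{align*}

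For \eqref{eq:D-1}, set $y=L_{2s+1}(x)=z^{2s+1}+z^{-(2s+1)}$ and $v=w^{2s+1}$, so that $y=v^2+v^{-2}$. The same closed form then gives
\begin{align*}
	D_t(y)=\frac{v^{2t+1}+v^{-2t-1}}{v+v^{-1}}.
\end{align*}
Multiplying by $D_s(x)$, the denominator $v+v^{-1}=w^{2s+1}+w^{-2s-1}$ cancels the numerator of $D_s(x)$. What remains is
\begin{align*}
	\frac{w^{(2s+1)(2t+1)}+w^{-(2s+1)(2t+1)}}{w+w^{-1}}=D_{(2s+1)t+s}(x),
\end{align*}
because $2\big((2s+1)t+s\big)+1=(2s+1)(2t+1)$. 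This proves \eqref{eq:D-1}.

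For \eqref{eq:D-3}, write $M=(2s+1)t+2s$. The indices $M+1+i$ and $M-i$ sum to $2M+1$, so the left side is
\begin{align*}
	\frac{w^{2M+3+2i}+w^{-2M-3-2i}-w^{2M+1-2i}-w^{-2M-1+2i}}{w+w^{-1}}.
\end{align*}
Factor the numerator as a product of differences: with $P=w^{2M+2}$ and $Q=w^{2i+1}$ it equals $(P-P^{-1})(Q-Q^{-1})$. Now identify each factor on the right of \eqref{eq:D-3}:
\begin{itemize}
	\item $F_{i+1}(x)+F_i(x)=\dfrac{Q-Q^{-1}}{w-w^{-1}}$, by the same half-angle computation as for $D_n$ but with a plus sign.
	\item $2M+2=(2s+1)(2t+2)$, so $P-P^{-1}=v^{2t+2}-v^{-2t-2}$, and therefore $F_{t+1}(y)=\dfrac{P-P^{-1}}{v^2-v^{-2}}$.
	\item $L_{s+1}(x)-L_s(x)=w^{2s+2}+w^{-2s-2}-w^{2s}-w^{-2s}=(w-w^{-1})(w^{2s+1}-w^{-2s-1})$.
	\item $D_s(x)=\dfrac{v+v^{-1}}{w+w^{-1}}$.
\end{itemize}
Multiplying these four factors gives
\begin{align*}
	\frac{v+v^{-1}}{w+w^{-1}}\,(w-w^{-1})(v-v^{-1})\,\frac{P-P^{-1}}{v^2-v^{-2}}\,\frac{Q-Q^{-1}}{w-w^{-1}}
	=\frac{(P-P^{-1})(Q-Q^{-1})}{w+w^{-1}},
\end{align*}
since $(v+v^{-1})(v-v^{-1})=v^2-v^{-2}$. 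This matches the left side and proves \eqref{eq:D-3}.

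The only real obstacle is the bookkeeping of the half-angle factorizations. To control it, we expand every quantity fully as a Laurent polynomial in $w$ before cancelling anything. The range condition $0\le i\le M$ is not needed for the identity itself; it only guarantees that the index $M-i$ is nonnegative, so the range is harmless.

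Finally, both sides are polynomials in $x$, and the identities hold for all $z$ of the form $z=w^2$. Every $x\in\mathbb{C}$ can be written as $w^2+w^{-2}$, so the identities hold as polynomial identities in $x$.
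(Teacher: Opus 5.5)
Your proof is correct and follows essentially the same route as the paper. Both arguments pass to the Binet parametrization $x=z+z^{-1}$ and write $D_n$ and the difference in \eqref{eq:D-3} as explicit Laurent expressions: the paper keeps $z$, with $D_n=\frac{1+z^{2n+1}}{z^n(1+z)}$, while your substitution $z=w^2$ only symmetrizes the same formulas. Both then match factors, and your direct evaluation of $D_t$ and $F_{t+1}$ at $v^2+v^{-2}$ does the job of the paper's appeal to \eqref{eq:F-L}.
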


\begin{proof}
	We have
	\begin{align*}
		D_{(2s+1)t+s}(x) = \frac{1+z^{(1+2s)(1+2t)}}{z^{2st+s+t}(1+z)} = D_s(x)\left(\frac{F_{(2s+1)(t+1)}(x)}{F_{2s+1}(x)} - \frac{F_{(2s+1)t}(x)}{F_{2s+1}(x)}\right).
	\end{align*}
	Therefore, \eqref{eq:D-1} follows by applying \eqref{eq:F-L}. For \eqref{eq:D-3}, we have
	\begin{align*}
		D_{(2s+1)t+2s+1+i}(x) &- D_{(2s+1)t+2s-i}(x)\\
		& = \frac{(1-z^{2(1+2s)(1+t)})(1-z^{1+2i})}{z^{(1+2s)(1+t)+i}(1+z)}\\
		& = D_s(x) \big(L_{s+1}(x) - L_s(x)\big) F_{t+1}\big(L_{2s+1}(x)\big) \big(F_{i+1}(x)+F_i(x)\big),
	\end{align*}
	where we have applied \eqref{eq:F-L} to $F_{t+1}\big(L_{2s+1}(x)\big)$ in the last equality.
\end{proof}

\begin{proposition}
	We have
	\begin{align}
		\frac{\ddd}{\ddd x}\bigg\vert_{x\mapsto -x}F_n(x) &= (-1)^{n} \frac{\ddd}{\ddd x}F_n(x),\label{eq:F'(-x)}\\
		\frac{\ddd}{\ddd x}\bigg\vert_{x\mapsto -x}L_n(x) &= (-1)^{n-1} \frac{\ddd}{\ddd x}L_n(x).\label{eq:L'(-x)}
	\end{align}
\end{proposition}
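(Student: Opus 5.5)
The plan is to differentiate the parity relations \eqref{eq:F(-x)} and \eqref{eq:L(-x)} with the chain rule; no new input is needed. First, \eqref{eq:F(-x)} and \eqref{eq:L(-x)} are polynomial identities in $x$. They were derived through the substitution $x=z+z^{-1}$, but $z\mapsto z+z^{-1}$ takes infinitely many values, so the identities hold identically in $\mathbb{C}[x]$. (Alternatively, check them by induction from the recurrences \eqref{eq:F-rec} and \eqref{eq:L-rec}: replacing $x$ by $-x$ flips the sign of the coefficient $x$, which matches the alternating signs $(-1)^{n-1}$ and $(-1)^n$.) Because they are polynomial identities, we may differentiate both sides formally.

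Write $F_n'$ for the derivative of $F_n$, so that the left-hand side of \eqref{eq:F'(-x)} is $F_n'(-x)$. Differentiating $F_n(-x)=(-1)^{n-1}F_n(x)$ with respect to $x$, the chain rule gives
\begin{align*}
	-F_n'(-x) = (-1)^{n-1}F_n'(x).
\end{align*}
Multiplying by $-1$ gives $F_n'(-x)=(-1)^nF_n'(x)$, which is \eqref{eq:F'(-x)}.

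In the same way, differentiating $L_n(-x)=(-1)^nL_n(x)$ gives $-L_n'(-x)=(-1)^nL_n'(x)$, that is, $L_n'(-x)=(-1)^{n-1}L_n'(x)$, which is \eqref{eq:L'(-x)}.

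There is no real obstacle. The only point to handle carefully is the interpretation of the notation $\frac{\ddd}{\ddd x}\big\vert_{x\mapsto -x}$: it means the derivative is taken first and then evaluated at $-x$, not the derivative of the composite function $x\mapsto F_n(-x)$. This is exactly why the sign from the chain rule appears and the parities are reversed compared with \eqref{eq:F(-x)} and \eqref{eq:L(-x)}.
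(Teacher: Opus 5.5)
Your proof is correct and is essentially the paper's argument. The paper reads \eqref{eq:F(-x)} and \eqref{eq:L(-x)} as saying that $F_n$ and $L_n$ have definite parity, and notes that differentiation switches parity. Your chain-rule differentiation of $F_n(-x)=(-1)^{n-1}F_n(x)$ and $L_n(-x)=(-1)^nL_n(x)$ spells out exactly that step.
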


\begin{proof}
	By \eqref{eq:F(-x)}, we know that the polynomial $F_n(x)$ is an odd function if $n$ is even, and an even function if $n$ is odd. Since taking the derivative switches the parity, the relation \eqref{eq:F'(-x)} immediately follows. Finally, a similar argument justifies \eqref{eq:L'(-x)}.
\end{proof}

\begin{proposition}
	We have
	\begin{align}\label{eq:L'}
		\frac{\ddd}{\ddd x} L_n(x) = n F_n(x).
	\end{align}
\end{proposition}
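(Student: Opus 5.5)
We use the parametrization $x=z+z^{-1}$ and compare derivatives of both sides with respect to $z$. By the chain rule, $\frac{\ddd}{\ddd z} = (1-z^{-2})\frac{\ddd}{\ddd x}$. Differentiating the Binet-type formula \eqref{eq:Binet-L} gives
\begin{align*}
	(1-z^{-2})\,\frac{\ddd}{\ddd x}L_n(x) = \frac{\ddd}{\ddd z}\big(z^n+z^{-n}\big) = n\big(z^{n-1}-z^{-n-1}\big) = n z^{-1}\big(z^{n}-z^{-n}\big).
\end{align*}
Since $1-z^{-2} = z^{-1}(z-z^{-1})$, dividing by this factor yields
\begin{align*}
	\frac{\ddd}{\ddd x}L_n(x) = n\,\frac{z^n-z^{-n}}{z-z^{-1}} = nF_n(x),
\end{align*}
where the last step is \eqref{eq:Binet-F}.

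It remains to pass from an identity in $z$ to an identity of polynomials in $x$. Both sides of \eqref{eq:L'} are polynomials in $x$. The argument above shows they agree for every $x=z+z^{-1}$ with $z\ne 0,\pm1$. That set of values of $x$ is infinite, so the two polynomials coincide.

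This final step is the only point needing care, and it is routine.

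If one prefers to avoid the Binet-type formulas, an induction works instead. Differentiate the recurrence \eqref{eq:L-rec} to get $L_n' = L_{n-1} + xL_{n-1}' - L_{n-2}'$. Then verify that $nF_n$ satisfies the same relation, using $L_{n-1} = F_n - F_{n-2}$. This last identity is \eqref{eq:F_n+-i} with $i=1$ applied to $F_{n-1}$, combined with \eqref{eq:F-rec}. The Binet route is shorter, so I would present that one.
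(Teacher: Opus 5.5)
Your main argument is the same as the paper's proof: you differentiate $L_n(z+z^{-1})=z^n+z^{-n}$ in $z$, divide by $\frac{\ddd}{\ddd z}(z+z^{-1})=1-z^{-2}$, and recognize $n\frac{z^n-z^{-n}}{z-z^{-1}}=nF_n(x)$ via \eqref{eq:Binet-F}, and your remark about agreement on infinitely many values of $x$ makes explicit what the paper leaves implicit. One correction to your optional induction aside: \eqref{eq:F_n+-i} with $i=1$ at index $n-1$ only reproduces the recurrence $F_n+F_{n-2}=xF_{n-1}$, so it cannot give $L_{n-1}=F_n-F_{n-2}$; that identity instead follows from the Binet formulas, since $(z^{n-1}+z^{1-n})(z-z^{-1})=(z^n-z^{-n})-(z^{n-2}-z^{2-n})$.
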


\begin{proof}
	Recall that $L_n(z+z^{-1}) = z^n + z^{-n}$. By the chain rule,
	\begin{align*}
		\frac{\ddd}{\ddd x}\bigg\vert_{x\mapsto z+z^{-1}} L_n(x) = \frac{\frac{\ddd}{\ddd z}(z^n + z^{-n})}{\frac{\ddd}{\ddd z}(z+z^{-1})} = n \frac{z^n - z^{-n}}{z - z^{-1}} = n F(x),
	\end{align*}
	as claimed.
\end{proof}

\begin{proposition}
	We have
	\begin{align}\label{eq:FF'}
		F_n(x) \frac{\ddd}{\ddd x} F_{n+1}(x) - F_{n+1}(x) \frac{\ddd}{\ddd x} F_{n}(x) = \sum_{j=1}^n F_j(x)^2.
	\end{align}
\end{proposition}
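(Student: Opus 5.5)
We prove \eqref{eq:FF'} by induction on $n$, writing $W_n(x) := F_n(x)F_{n+1}'(x) - F_{n+1}(x)F_n'(x)$, where the prime denotes $\frac{\ddd}{\ddd x}$. The base case is $n=1$: since $F_1=1$ and $F_2=x$, we get $W_1 = 1\cdot 1 - x\cdot 0 = 1 = F_1^2$. (With $F_0=0$ one also has $W_0=0$, the empty sum, so we may equally start from $n=0$.)

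For the inductive step we differentiate the recurrence \eqref{eq:F-rec}. From $F_{n+2} = xF_{n+1} - F_n$ we obtain $F_{n+2}' = F_{n+1} + xF_{n+1}' - F_n'$. Substituting both relations into
\begin{align*}
	W_{n+1} = F_{n+1}F_{n+2}' - F_{n+2}F_{n+1}'
\end{align*}
gives
\begin{align*}
	W_{n+1} = F_{n+1}^2 + xF_{n+1}F_{n+1}' - F_{n+1}F_n' - xF_{n+1}F_{n+1}' + F_nF_{n+1}'.
\end{align*}
The two terms containing $x$ cancel, leaving
\begin{align*}
	W_{n+1} = F_{n+1}^2 + \big(F_nF_{n+1}' - F_{n+1}F_n'\big) = F_{n+1}^2 + W_n.
\end{align*}
Hence $W_n = \sum_{j=1}^n F_j^2$ follows by induction.

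This is a purely mechanical Wronskian-type telescoping, so no step is a real obstacle. The only point to check is that the $x F_{n+1}F_{n+1}'$ terms cancel exactly. As a cross-check one could verify the identity through the Binet formula \eqref{eq:Binet-F} with $x=z+z^{-1}$, but the induction is cleaner and I would present only that.
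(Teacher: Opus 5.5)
Your induction is correct: differentiating $F_{n+2}=xF_{n+1}-F_n$ and substituting does give $W_{n+1}=W_n+F_{n+1}^2$, and the base case checks out.

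The paper takes a different route. It divides by $F_n(x)^2$ and recasts the claim as $\frac{\ddd}{\ddd x}\big(F_{n+1}(x)/F_n(x)\big)=\sum_{j=1}^n\big(F_j(x)/F_n(x)\big)^2$. It then evaluates the left side at $x=z+z^{-1}$ via the Binet formula \eqref{eq:Binet-F} and the chain rule. This produces an explicit rational function of $z$, which it identifies with $\sum_{j=1}^n (z^j-z^{-j})^2/(z^n-z^{-n})^2$.

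Your argument is more elementary and self-contained. It uses only the recurrence and $F_0=0$, $F_1=1$, and it involves no division. So there is no need to avoid zeros of $F_n$ or $z=\pm1$ and then extend by polynomial identity. It is essentially the confluent Christoffel--Darboux telescoping, and it adapts to any three-term recurrence $p_{n+1}=(x-a_n)p_n-b_np_{n-1}$ in the weighted form $W_n=p_n^2+b_nW_{n-1}$.

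The paper's computation, by contrast, stays within the Binet-formula toolkit used throughout that section. Its intermediate expression also amounts to the closed form $\sum_{j=1}^n F_j(x)^2=\frac{F_{2n+1}(x)-(2n+1)}{x^2-4}$, which your telescoping does not produce.
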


\begin{proof}
	It is equivalent to show
	\begin{align*}
		\frac{\ddd}{\ddd x} \frac{F_{n+1}(x)}{F_n(x)} = \sum_{j=1}^n \left(\frac{F_j(x)}{F_n(x)}\right)^2.
	\end{align*}
	By the chain rule,
	\begin{align*}
		\frac{\ddd}{\ddd x}\bigg\vert_{x\mapsto z+z^{-1}} \frac{F_{n+1}(x)}{F_n(x)} &= \frac{\frac{\ddd}{\ddd z}\frac{z^{n+1} - z^{-n-1}}{z^n - z^{-n}}}{\frac{\ddd}{\ddd z}(z+z^{-1})} = \frac{(z^{2n+1}-z^{-2n-1}) - (2n+1)(z-z^{-1})}{(z-z^{-1})(z^n - z^{-n})^2}\\
		&= \sum_{j=1}^n \left(\frac{z^j - z^{-j}}{z^n - z^{-n}}\right)^2 = \sum_{j=1}^n \left(\frac{F_j(x)}{F_n(x)}\right)^2,
	\end{align*}
	as claimed.
\end{proof}

\begin{proposition}\label{prop:det-Fst}
	We have
	\begin{align}
		\det\begin{pmatrix}
			F_{st}(x) & \frac{\ddd}{\ddd x} F_{st}(x)\\
			F_{s(t+1)}(x) & \frac{\ddd}{\ddd x} F_{s(t+1)}(x)
		\end{pmatrix} = s F_s(x) \sum_{j=1}^t F_{sj}(x)^2.
	\end{align}
\end{proposition}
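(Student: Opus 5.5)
We want the identity
\[
F_{st}F_{s(t+1)}' - F_{s(t+1)}F_{st}' = sF_s\sum_{j=1}^t F_{sj}^2 .
\]
The plan is to reduce it to \eqref{eq:FF'} by means of the composition formula \eqref{eq:F-L}, applied with the substitution $y:=L_s(x)$.

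By \eqref{eq:F-L} we have $F_{st}(x)=F_s(x)F_t(y)$ and $F_{s(t+1)}(x)=F_s(x)F_{t+1}(y)$. The determinant is bilinear in the two functions. If both entries of a row share the factor $F_s(x)$, the product rule splits it into two pieces. One piece has the form $F_s(x)^2$ times a determinant in $F_t(y)$ and $F_{t+1}(y)$. The other piece involves $F_s'(x)$ and is proportional to $F_t(y)F_{t+1}(y)-F_{t+1}(y)F_t(y)=0$, so it drops out. Concretely,
\[
\det = F_s(x)^2\Big(F_t(y)\tfrac{\ddd}{\ddd x}F_{t+1}(y)-F_{t+1}(y)\tfrac{\ddd}{\ddd x}F_t(y)\Big).
\]

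Next we apply the chain rule together with \eqref{eq:L'}. This gives $\tfrac{\ddd}{\ddd x}F_t(L_s(x)) = F_t'(y)\cdot sF_s(x)$. The determinant therefore becomes
\[
sF_s(x)^3\big(F_tF_{t+1}'-F_{t+1}F_t'\big)(y).
\]
By \eqref{eq:FF'}, evaluated at $y$, this equals $sF_s(x)^3\sum_{j=1}^t F_j(y)^2$.

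Finally we apply \eqref{eq:F-L} once more, in the form $F_j(y)=F_{sj}(x)/F_s(x)$. Then $F_s(x)^3\sum_j F_j(y)^2 = F_s(x)\sum_j F_{sj}(x)^2$, which is the claimed right-hand side.

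The only delicate point is the division by $F_s(x)$ in the last step, and it is harmless. We can avoid it entirely by instead writing $F_s(x)^2F_j(y)^2=F_{sj}(x)^2$, which is just \eqref{eq:F-L} squared. Alternatively, the identity is a polynomial identity, so it suffices to prove it where $F_s\ne0$. Beyond this there is no real obstacle; the main care needed is tracking the chain-rule factor $sF_s(x)$ correctly.
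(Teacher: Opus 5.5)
Your proof is correct and is the same argument as the paper's. The paper also factors both entries via \eqref{eq:F-L}, lets the $F_s'$ terms cancel, uses the chain rule with \eqref{eq:L'} to produce $sF_s(x)^3\big(F_tF_{t+1}'-F_{t+1}F_t'\big)(L_s(x))$, and then finishes with \eqref{eq:FF'} and a second application of \eqref{eq:F-L}.
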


\begin{proof}
	For convenience, we write $F'_n(x_0) := \frac{\ddd}{\ddd x}\big\vert_{x\mapsto x_0} F_n(x)$. In light of \eqref{eq:F-L},
	\begin{align*}
		&\det\begin{pmatrix}
			F_{st}(x) & F'_{st}(x)\\
			F_{s(t+1)}(x) & F'_{s(t+1)}(x)
		\end{pmatrix}\\
		&\qquad = \det\begin{pmatrix}
		F_s(x) F_t\big(L_s(x)\big) & F'_s(x) F_t\big(L_s(x)\big) + F_s(x) L'_s(x) F'_t\big(L_s(x)\big)\\
		F_s(x) F_{t+1}\big(L_s(x)\big) & F'_s(x) F_{t+1}\big(L_s(x)\big) + F_s(x) L'_s(x) F'_{t+1}\big(L_s(x)\big)
		\end{pmatrix}\\
		&\qquad = F_s(x)^2 L'_s(x)  \big[F_t\big(L_s(x)\big)F'_{t+1}\big(L_s(x)\big) - F_{t+1}\big(L_s(x)\big)F'_{t}\big(L_s(x)\big)\big]\\
		&\qquad = s F_s(x)^3 \sum_{j=1}^t F_j\big(L_s(x)\big)^2,
	\end{align*}
	where we have used \eqref{eq:L'} and \eqref{eq:FF'} in the last equality. Finally, applying \eqref{eq:F-L} again, the claimed identity follows.
\end{proof}

Next, we require a technical lemma, which will be frequently used in our determinant evaluations.

\begin{lemma}\label{le:det-derivative}
	Assume $(p_n(x))_{n\ge 0}$ is a family of monic polynomials with $p_n(x)$ of degree $n$. For any function $w(x)$ smooth at zero,
	\begin{align}
		\det_{0\le i,j\le m} \left(\frac{\ddd^j}{\ddd x^j}\bigg\vert_{x\mapsto 0}\frac{w(x)p_i(x)}{j!}\right) = w(0)^{m+1}.
	\end{align}
\end{lemma}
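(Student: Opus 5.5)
The plan is to factor the matrix as a product of a lower unitriangular matrix and a lower triangular matrix whose diagonal entries are all $w(0)$.

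Write $w(x)=\sum_{r\ge0} w_r x^r$ for the Taylor expansion at zero, with $w_0=w(0)$. Then the $(i,j)$ entry $\frac{1}{j!}\frac{\ddd^j}{\ddd x^j}\big\vert_{x\mapsto 0}\big(w(x)p_i(x)\big)$ is just the coefficient of $x^j$ in $w(x)p_i(x)$. Writing $p_i(x)=\sum_{l=0}^{i} p_{i,l}x^l$ with $p_{i,i}=1$, this coefficient is
\begin{align*}
\sum_{l=0}^{\min(i,j)} p_{i,l}\, w_{j-l},
\end{align*}
where we set $w_r=0$ for $r<0$.

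This exhibits the matrix $\big(\,[x^j]\,w(x)p_i(x)\big)_{0\le i,j\le m}$ as a product $PW$. The factor $P=(p_{i,l})_{0\le i,l\le m}$ is lower triangular with unit diagonal, since each $p_i$ is monic of degree $i$. The factor $W=(w_{j-l})_{0\le l,j\le m}$ is upper triangular Toeplitz, with diagonal entries equal to $w_0$.

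Taking determinants gives $\det P\cdot\det W=1\cdot w_0^{m+1}=w(0)^{m+1}$.

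The only point needing care is to justify identifying the derivative entries with Taylor coefficients. Smoothness of $w$ at zero suffices for this, since only finitely many derivatives up to order $m$ are involved, and Leibniz's rule gives the convolution formula directly even without analyticity.
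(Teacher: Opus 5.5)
Your proof is correct and is essentially the paper's argument: the paper's row operations replacing $p_i(x)$ by $x^i$ amount to left-multiplying by $P^{-1}$, which leaves exactly your upper triangular Toeplitz matrix $W$ with every diagonal entry equal to $w(0)$. Your opening sentence calls this second factor lower triangular, but you later correctly identify it as upper triangular, and the determinant is unaffected either way.
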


\begin{proof}
	It is clear that $(p_n(x))_{n\ge 0}$ form a basis of the vector space of polynomials in $x$. By certain row manipulations,
	\begin{align*}
		\det_{0\le i,j\le m} \left(\frac{\ddd^j}{\ddd x^j}\bigg\vert_{x\mapsto 0}\frac{w(x)p_i(x)}{j!}\right) = \det_{0\le i,j\le m} \left(\frac{\ddd^j}{\ddd x^j}\bigg\vert_{x\mapsto 0}\frac{w(x)x^i}{j!}\right).
	\end{align*}
	Note that
	\begin{align*}
		\frac{\ddd^j}{\ddd x^j}\bigg\vert_{x\mapsto 0}\frac{w(x)x^i}{j!} = \begin{cases}
			0, & j<i,\\
			w(0), & j=i.
		\end{cases}
	\end{align*}
	Thus, the matrix on the right-hand side of the above is upper triangular with every diagonal entry being $w(0)$. The claimed determinant therefore holds.
\end{proof}

Finally, we notice that $L_n(x)$ and $F_n(x)$ are essentially the \emph{Chebyshev polynomials} of the first and second kinds~\cite{MH2003}, $T_n(x)$ and $U_n(x)$, linked by the relations
\begin{align*}
	L_n(x) = 2T_n(\tfrac{x}{2}),\qquad\qquad
	F_n(x) = U_{n-1}(\tfrac{x}{2}),
\end{align*}
which can be seen from the recurrences satisfied by the two kinds of Chebyshev polynomials~\cite[pp.~ 2 and 4, eqs.~(1.3) and (1.6)]{MH2003}. In light of the definition of $U_n(x)$ given in \cite[p.~3, eq.~(1.4)]{MH2003}, we further have
\begin{align}\label{eq:F-eva}
	F_n(2\cos\theta) = \frac{\sin (n\theta)}{\sin \theta}.
\end{align}
Thus, the following relation is valid.

\begin{lemma}\label{le:F-factorization}
	For $n\ge 1$,
	\begin{align}
		F_n(x) = \prod_{j=1}^{n-1} (x - x_j),
	\end{align}
	where
	\begin{align*}
		x_j := 2\cos \tfrac{j \pi}{n},\qquad\qquad 1\le j\le n-1,
	\end{align*}
	are pairwise distinct.
\end{lemma}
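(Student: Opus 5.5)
The identity $F_n(2\cos\theta)=\sin(n\theta)/\sin\theta$ from \eqref{eq:F-eva} locates all the roots, and a degree count then gives the full factorization. Since $F_0=0$, $F_1=1$ and $F_n(x)=xF_{n-1}(x)-F_{n-2}(x)$, induction on $n$ shows that $F_n$ is a monic polynomial of degree $n-1$ for every $n\ge1$. The cases $n=1,2$ give $F_1=1$ and $F_2=x$, and in the inductive step the term $xF_{n-1}$ has degree $n-1$ with leading coefficient $1$, while $F_{n-2}$ has lower degree.

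For $n=1$ the product is empty and the claim reads $F_1(x)=1$, which holds. For $n\ge 2$, take $\theta_j=j\pi/n$ with $1\le j\le n-1$. Then $\theta_j\in(0,\pi)$, so $\sin\theta_j\ne 0$, while $\sin(n\theta_j)=\sin(j\pi)=0$. By \eqref{eq:F-eva}, $F_n(x_j)=0$ for each $j$. To justify \eqref{eq:F-eva} itself, put $z=e^{i\theta}$ in the Binet formula \eqref{eq:Binet-F}, which gives $z+z^{-1}=2\cos\theta$ and $(z^n-z^{-n})/(z-z^{-1})=\sin(n\theta)/\sin\theta$.

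Cosine is strictly decreasing on $(0,\pi)$, so the values $x_j=2\cos(j\pi/n)$ for $1\le j\le n-1$ are pairwise distinct. This gives $n-1$ distinct roots of a monic polynomial of degree $n-1$, so $F_n(x)=\prod_{j=1}^{n-1}(x-x_j)$.

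The argument is elementary throughout. The only points needing care are checking that the degree of $F_n$ is exactly $n-1$ and that the roots are distinct, and both are immediate.
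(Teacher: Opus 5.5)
Your proof is correct and takes essentially the same approach as the paper, which deduces the lemma directly from $F_n(2\cos\theta)=\sin(n\theta)/\sin\theta$. You locate the $n-1$ distinct roots $2\cos(j\pi/n)$ and conclude by the monic degree-$(n-1)$ count, filling in details the paper leaves implicit; the only cosmetic difference is that you obtain \eqref{eq:F-eva} from the Binet formula with $z=e^{i\theta}$ rather than from the Chebyshev identification $F_n(x)=U_{n-1}(x/2)$.
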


\section{Case $b=0$}\label{sec:b0}

Define a family of polynomials
\begin{align}\label{eq:P-def}
	P_n(x) := F_{n+1}(x-c),\qquad\qquad n\ge 0.
\end{align}
In view of \eqref{eq:F-rec}, we know that $P_n(x)$ is monic and of degree $n$. Therefore, the ring of polynomials in $x$ is generated by $(P_n(x))_{n\ge 0}$. In particular, for an arbitrary polynomial $f(x)$ of degree $d$, there is a \emph{unique} expansion
\begin{align}
	f(x) = \sum_{l=0}^d f_l^\dagger P_l(x).
\end{align}
We then define a functional $\Phi$ on this polynomial ring by
\begin{align}
	\Phi(f) := f_0^\dagger.
\end{align}
It is clear that $\Phi$ is \emph{linear}.

Now we establish the following evaluation of the linear functional $\Phi$.

\begin{lemma}\label{le:Phi-eva}
	For $n,k\ge 0$,
	\begin{align}\label{eq:Phi-eva}
		\Phi\big(x^n P_k(x)\big) = \Cat_{n,k}^{(0,c)}.
	\end{align}
\end{lemma}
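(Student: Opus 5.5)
We argue by induction on $n$, using the three-term recurrence of the $P_k$ to mirror the recurrence \eqref{eq:a-rec} defining $\Cat_{n,k}^{(0,c)}$.

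First, we record the recurrence for $P_k$. From \eqref{eq:F-rec} with argument $x-c$, we have $F_{k+2}(x-c)=(x-c)F_{k+1}(x-c)-F_k(x-c)$. Hence for $k\ge 1$,
\begin{align*}
	xP_k(x) = P_{k+1}(x) + c\,P_k(x) + P_{k-1}(x).
\end{align*}
For $k=0$ we have $P_0=1$ and $P_1(x)=x-c$, so $xP_0(x)=P_1(x)+cP_0(x)$. This agrees with the general formula if we set $P_{-1}:=F_0(x-c)=0$. With the convention $P_{-1}=0$, the identity above therefore holds for all $k\ge 0$.

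The base case is $n=0$. By definition, $\Phi(P_k)$ is the coefficient of $P_0$ in the expansion of $P_k$ in the basis $(P_l)$. By uniqueness of that expansion, this equals $\delta_{k,0}$, which is $\Cat_{0,k}^{(0,c)}$ by \eqref{eq:a-boundary}.

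For the inductive step with $n\ge 1$, we write $x^nP_k(x)=x^{n-1}\cdot xP_k(x)$ and substitute the recurrence. Linearity of $\Phi$ gives
\begin{align*}
	\Phi\big(x^nP_k\big) = \Phi\big(x^{n-1}P_{k+1}\big) + c\,\Phi\big(x^{n-1}P_k\big) + \Phi\big(x^{n-1}P_{k-1}\big).
\end{align*}
By the induction hypothesis (applied for all $k$ simultaneously), the right-hand side is
\begin{align*}
	\Cat_{n-1,k+1}^{(0,c)} + c\,\Cat_{n-1,k}^{(0,c)} + \Cat_{n-1,k-1}^{(0,c)}.
\end{align*}
When $k=0$, the last term comes from $P_{-1}=0$ and equals $\Cat_{n-1,-1}^{(0,c)}=0$ by \eqref{eq:a-boundary}, so it is consistent. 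Since $\bs=(c,c,\ldots)$ when $b=0$, we have $s_k=c$ for every $k$, and the expression above is exactly $\Cat_{n,k}^{(0,c)}$ by \eqref{eq:a-rec}.

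The only point requiring care is the boundary case $k=0$. There the convention $P_{-1}=0$ must match the boundary condition $\Cat_{n,k}^{(0,c)}=0$ for $k<0$, and the weight $s_0=b+c=c$ at level $0$ must match the coefficient of $P_0$ in $xP_0(x)=P_1(x)+cP_0(x)$. Both hold precisely because $b=0$, so the argument goes through.
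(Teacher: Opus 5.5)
Your proposal is correct and follows essentially the same route as the paper. Both arguments rewrite the recurrence for $P_k$ as $xP_k=P_{k+1}+cP_k+P_{k-1}$ with $P_{-1}=0$, apply linearity of $\Phi$ after multiplying by $x^{n-1}$, and match this against \eqref{eq:a-rec} with the base case $\Phi(P_k)=\delta_{k,0}$.
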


\begin{proof}
	By \eqref{eq:F-rec},
	\begin{align}\label{eq:P-rec}
		P_{k+1}(x) = (x-c) P_k(x) - P_{k-1}(x),
	\end{align}
	where we put $P_{-1}(x) := 0$. After rearranging the terms, we have
	\begin{align*}
		x P_k(x) = P_{k+1}(x) + s_k P_k(x) + P_{k-1}(x),
	\end{align*}
	where
	\begin{align*}
		(s_0,s_1,s_2,\ldots) := (c,c,c,\ldots).
	\end{align*}
	Multiplying the previous relation by $x^{n-1}$ and invoking the linearity of $\Phi$, it follows that
	\begin{align*}
		\Phi\big(x^n P_k(x)\big) = \Phi\big(x^{n-1} P_{k+1}(x)\big) + s_k \Phi\big(x^{n-1} P_k(x)\big) + \Phi\big(x^{n-1} P_{k-1}(x)\big).
	\end{align*}
	For $n = 0$, it is clear from the definition of $\Phi$ that $\Phi\big(x^n P_k(x)\big)$ equals $1$ if $k = 0$, and $0$ when $k\ge 1$. Thus, $\Phi\big(x^n P_k(x)\big)$ satisfies the same recurrence and initial values as $\Cat_{n,k}^{(0,c)}$, thereby showing the claimed equality.
\end{proof}

Next, we move on to the orthogonality relation.

\begin{lemma}\label{le:P-orthogonal}
	The monic polynomials $(P_n(x))_{n\ge 0}$ are orthogonal associated with the linear functional $\Phi$.
\end{lemma}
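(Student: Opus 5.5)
Two routes are available, and I would give both. The quickest is Favard's theorem. By \eqref{eq:P-rec}, the $P_n$ satisfy
\begin{align*}
P_{n+1}(x) = (x-c)P_n(x) - P_{n-1}(x).
\end{align*}
This is the three-term recurrence \eqref{eq:3TermRec} with $a_n=-c$ and $b_n=1\neq 0$ for all $n\ge1$, together with $P_0=1$ and $P_1=x-c$. So the $P_n$ are orthogonal with respect to \emph{some} linear functional. That is not quite the claim, which is that the functional is our particular $\Phi$. I would therefore verify orthogonality against $\Phi$ directly, which is just as easy.

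The direct check: we need $\Phi(P_mP_n)=\delta_{m,n}\kappa_n$ with every $\kappa_n\ne0$. By symmetry assume $m\le n$. Since $P_m$ has degree $m$, expand $P_m(x)=\sum_{i\le m}\alpha_i x^i$, so that $\Phi(P_mP_n)=\sum_i \alpha_i \Phi(x^iP_n)=\sum_i\alpha_i\Cat_{i,n}^{(0,c)}$ by Lemma~\ref{le:Phi-eva}. The key fact is $\Cat_{i,n}^{(0,c)}=0$ for $i<n$, which holds either because a Motzkin path with $i$ steps cannot reach height $n>i$, or by a trivial induction on the recurrence \eqref{eq:a-rec}. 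It follows that $\Phi(P_mP_n)=0$ when $m<n$. When $m=n$, only the leading coefficient $\alpha_n=1$ contributes, giving $\kappa_n=\Cat_{n,n}^{(0,c)}=1$; indeed, the only path reaching height $n$ in $n$ steps consists of up steps alone and carries weight $1$. Hence $\kappa_n=1\ne0$, which establishes orthogonality with respect to $\Phi$.

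Alternatively, one can bypass Lemma~\ref{le:Phi-eva}: $\Phi(P_mP_n)$ is the coefficient of $P_0$ in the linearization of $P_mP_n$, and the Chebyshev product formula $F_{m+1}F_{n+1}=\sum_j F_{m+n+1-2j}$ gives the same conclusion. There is no real obstacle here. The only point requiring care is keeping the index conventions straight ($P_n=F_{n+1}(x-c)$, degrees versus heights), and the argument via Lemma~\ref{le:Phi-eva} is the cleanest route.
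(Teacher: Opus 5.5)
Your direct check is correct and is essentially the paper's own argument: expand $P_r$ in monomials, apply Lemma~\ref{le:Phi-eva}, and use $\Cat_{j,s}^{(0,c)}=0$ for $j<s$ together with $\Cat_{s,s}^{(0,c)}=1$ to get $\Phi(P_rP_s)=\delta_{r,s}$. Your Favard and Chebyshev-linearization remarks are also sound but not needed; in fact, any Favard functional with $L(1)=1$ satisfies $L(P_n)=L(P_nP_0)=0$ for $n\ge1$, so it automatically coincides with $\Phi$.
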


\begin{proof}
	We need to compute $\Phi\big(P_r(x)P_s(x)\big)$. Without loss of generality, assume $r\le s$. Write
	\begin{align*}
		P_r(x) := \sum_{j=0}^r P_{r,j} x^j,
	\end{align*}
	with $P_{r,r} = 1$ because $P_r(x)$ is monic and has degree $r$. By the linearity of $\Phi$,
	\begin{align*}
		\Phi\big(P_r(x)P_s(x)\big) = \sum_{j=0}^r P_{r,j} \Phi\big(x^j P_s(x)\big) = \sum_{j=0}^r P_{r,j} \Cat_{j,s}^{(0,c)},
	\end{align*}
	where we have applied \eqref{eq:Phi-eva}. According to \eqref{eq:a-rec}, it is true that $\Cat_{j,s}^{(0,c)} = 0$ whenever $j < s$, and that $\Cat_{s,s}^{(0,c)} = 1$. Thus, when $r<s$, we have the vanishing of $\Phi\big(P_r(x)P_s(x)\big)$, while when $r=s$, we have
	\begin{align*}
		\Phi\big(P_r(x)P_r(x)\big) = P_{r,r} \Cat_{r,r}^{(0,c)} = 1.
	\end{align*}
	Our claim is then valid.
\end{proof}

The above analysis leads us to the Hankel determinants for the moments $\Phi(x^n)$.

\begin{proposition}\label{prop:det-Phi}
	We have
	\begin{align}
		\underset{{0\le i,j\le N-1}}{\det}\big(\Phi(x^{i+j})\big) = 1.
	\end{align}
\end{proposition}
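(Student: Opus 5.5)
The plan is to apply Lemma~\ref{le:det-general} to the orthogonal family $(P_n(x))_{n\ge 0}$ and the functional $\Phi$. Lemma~\ref{le:P-orthogonal} guarantees the hypotheses hold. So it suffices to identify the normalization $\Phi(x^0)$ and the coefficients $b_n$ in the three-term recurrence \eqref{eq:3TermRec} for the $P_n$.

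First, $\Phi(x^0)=\Phi(P_0(x))=1$, either directly from the definition of $\Phi$ (since $P_0=1$) or from Lemma~\ref{le:Phi-eva} with $n=k=0$, which gives $\Cat_{0,0}^{(0,c)}=1$.

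Second, the recurrence \eqref{eq:P-rec} reads
\begin{align*}
P_{n+1}(x)=(x-c)P_n(x)-P_{n-1}(x),\qquad n\ge 1.
\end{align*}
It has exactly the shape \eqref{eq:3TermRec} with $a_n=-c$ and $b_n=1$ for all $n\ge 1$. The initial value also matches: $P_1(x)=F_2(x-c)=x-c$, consistent with $p_1(x)=x+a_0$.

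Substituting into \eqref{eq:det-general} gives
\begin{align*}
\det_{0\le i,j\le N-1}\big(\Phi(x^{i+j})\big)=1^N\cdot 1^{N-1}\cdots 1=1.
\end{align*}
There is no real obstacle. The only point needing care is that $b_n=1\neq 0$, as Favard's theorem requires, and that the recurrence for the $P_n$ indeed starts at $n=1$ with $P_{-1}=0$, matching the convention in Lemma~\ref{le:det-general}.

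Alternatively, one could note that $\Phi(x^n)=\Cat_{n,0}^{(0,c)}$ by Lemma~\ref{le:Phi-eva}, and then Aigner's result that Hankel determinants of Catalan-like numbers equal $1$ gives the claim immediately. I will present the orthogonal-polynomial argument, since it is self-contained within the framework of Section~\ref{sec:orthogonal}.
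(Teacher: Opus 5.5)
Your proposal is correct and matches the paper's proof: apply Lemma~\ref{le:det-general} with the recurrence \eqref{eq:P-rec}, so that $b_n=1$ for all $n\ge 1$, and use $\Phi(x^0)=\Cat_{0,0}^{(0,c)}=1$. Your checks of $P_1(x)=x-c$ and $b_n\neq 0$ fill in details the paper leaves implicit. Your remark that Aigner's result gives the same conclusion via $\Phi(x^n)=\Cat_{n,0}^{(0,c)}$ is also valid.
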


\begin{proof}
	In Lemma~\ref{le:det-general}, we use the three-term recursive relation \eqref{eq:P-rec} and recall from \eqref{eq:Phi-eva} that $\Phi(x^0) = \Cat_{0,0}^{(0,c)} = 1$.
\end{proof}

Finally, Lemma~\ref{le:F-factorization} tells us the following factorization of $P_n(x)$.

\begin{lemma}\label{le:P-factorization}
	For $n\ge 0$,
	\begin{align}\label{eq:P-factorization}
		P_n(x) = \prod_{j=1}^n (x-\rho_{n,j}),
	\end{align}
	where
	\begin{align*}
		\rho_{n,j} := c + 2\cos \tfrac{j \pi}{n+1},\qquad\qquad 1\le j\le n,
	\end{align*}
	are pairwise distinct.
\end{lemma}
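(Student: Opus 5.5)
Since $P_n(x)=F_{n+1}(x-c)$ by \eqref{eq:P-def}, the statement follows directly from Lemma~\ref{le:F-factorization} applied to $F_{n+1}$, followed by a shift of the variable. For $n\ge 0$, Lemma~\ref{le:F-factorization} with $n+1\ge 1$ in place of $n$ gives
\begin{align*}
	F_{n+1}(y)=\prod_{j=1}^{n}\big(y-2\cos\tfrac{j\pi}{n+1}\big).
\end{align*}
Substituting $y=x-c$ yields $x-c-2\cos\frac{j\pi}{n+1}=x-\rho_{n,j}$, which is exactly \eqref{eq:P-factorization}. When $n=0$ both sides equal the empty product $1=F_1$.

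Pairwise distinctness is inherited from that of the numbers $2\cos\frac{j\pi}{n+1}$, $1\le j\le n$, since translating all of them by $c$ preserves distinctness. These are distinct because $\cos$ is strictly decreasing on $(0,\pi)$ and the angles $\frac{j\pi}{n+1}$ are distinct points of that interval.

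If one wants to make Lemma~\ref{le:F-factorization} self-contained, the only step needing thought is the following. By \eqref{eq:F-eva}, $F_{n+1}(2\cos\theta)=\sin((n+1)\theta)/\sin\theta$ vanishes at $\theta=\frac{j\pi}{n+1}$ for $1\le j\le n$, where $\sin\theta\neq 0$. This produces $n$ distinct roots of a monic polynomial of degree $n$ (monicity follows from \eqref{eq:F-rec}), so they are all the roots and the factorization is forced.

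Here $c$ is an indeterminate, so the $\rho_{n,j}$ are regarded as elements of $\mathbb{C}[c]$ (or of a field containing $c$). Distinctness holds there as well, since the differences $\rho_{n,i}-\rho_{n,j}$ are nonzero constants. I expect no real obstacle.
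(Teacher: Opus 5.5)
Your proposal is correct and follows the paper's own route: the paper likewise obtains this lemma immediately from Lemma~\ref{le:F-factorization} via $P_n(x)=F_{n+1}(x-c)$, with distinctness of the $\rho_{n,j}$ inherited under the shift by $c$. Your optional self-contained justification of Lemma~\ref{le:F-factorization} ($n$ distinct roots of a monic degree-$n$ polynomial, obtained from \eqref{eq:F-eva}) is also sound.
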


For the moment, we fix $k\ge 0$ and define
\begin{align}
	\rho_{j} := c + 2\cos \tfrac{j \pi}{k+1},\qquad\qquad 1\le j\le k.
\end{align}
We have further properties of the polynomials $P(x)$ evaluated at $\rho_j$.

\begin{lemma}\label{le:P-property}
	Let $j$ with $1\le j\le k$ be arbitrary. For $r\ge 0$,
	\begin{align}\label{eq:P-1}
		P_{(k+1)n+r}(\rho_j) = (-1)^{nj} P_r(\rho_j).
	\end{align}
	Also,
	\begin{align}\label{eq:P-2}
		P_k(\rho_j) = P_{2k+1}(\rho_j) = 0,
	\end{align}
	and for $s$ with $1\le s\le k$,
	\begin{align}\label{eq:P-3}
		P_{k+s}(\rho_j) = -P_{k-s}(\rho_j).
	\end{align}
\end{lemma}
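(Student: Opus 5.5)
The natural route is the trigonometric evaluation \eqref{eq:F-eva}, since $P_n(x)=F_{n+1}(x-c)$ and $\rho_j-c=2\cos\theta_j$ with $\theta_j:=\tfrac{j\pi}{k+1}$. For every $N\ge 0$ this gives
\begin{align*}
	P_N(\rho_j) = F_{N+1}(2\cos\theta_j) = \frac{\sin\big((N+1)\theta_j\big)}{\sin\theta_j}.
\end{align*}
Here $1\le j\le k$ forces $0<\theta_j<\pi$, so $\sin\theta_j\neq 0$ and the expression is legitimate. Each of the three claims then becomes an elementary identity for the sine.

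For \eqref{eq:P-1}, take $N=(k+1)n+r$. Then
\begin{align*}
	(N+1)\theta_j = (r+1)\theta_j + nj\pi,
\end{align*}
and since $\sin(\alpha+nj\pi)=(-1)^{nj}\sin\alpha$, we get $P_{(k+1)n+r}(\rho_j)=(-1)^{nj}P_r(\rho_j)$.

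For \eqref{eq:P-2}, note $(k+1)\theta_j=j\pi$ and $(2k+2)\theta_j=2j\pi$. Both sines vanish, so $P_k(\rho_j)=P_{2k+1}(\rho_j)=0$. The second also follows from \eqref{eq:P-1} with $n=1$, $r=k$.

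For \eqref{eq:P-3}, write
\begin{align*}
	(k+s+1)\theta_j = 2j\pi - (k-s+1)\theta_j.
\end{align*}
Hence $\sin\big((k+s+1)\theta_j\big) = -\sin\big((k-s+1)\theta_j\big)$, which is exactly $P_{k+s}(\rho_j)=-P_{k-s}(\rho_j)$. The range $1\le s\le k$ only ensures $k-s\ge 0$, so that $P_{k-s}$ is defined. Alternatively one could use \eqref{eq:F_n+-i} with $n=k+1$, $i=s$, since $F_{k+1}(\rho_j-c)=0$.

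There is no real obstacle here. The only care needed is in keeping the index shift $P_n=F_{n+1}$ straight and in confirming $\sin\theta_j\ne0$.
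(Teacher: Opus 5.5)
Your proposal is correct and follows essentially the same route as the paper. Both evaluate $P_N(\rho_j)=\sin((N+1)\theta_j)/\sin\theta_j$ via \eqref{eq:F-eva} and read off all three identities from elementary sine relations; the only differences are cosmetic (the paper gets $P_k(\rho_j)=0$ from the factorization and reflects about $j\pi$ rather than $2j\pi$ for \eqref{eq:P-3}).
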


\begin{proof}
	Put $\theta_j := \frac{j \pi}{k+1}$. By \eqref{eq:F-eva}, we see that for any $N\ge 0$,
	\begin{align*}
		P_N(\rho_j) = \frac{\sin ((N+1)\theta_j)}{\sin \theta_j},
	\end{align*}
	which yields \eqref{eq:P-1}. For \eqref{eq:P-2}, it is immediate that $P_k(\rho_j) = 0$ by \eqref{eq:P-factorization}, which further gives $P_{2k+1}(\rho_j) = 0$ by invoking \eqref{eq:P-1} with $n=1$ and $r=k$. Finally,
	\begin{align*}
		P_{k+s}(\rho_j) &=\frac{\sin((k+s+1)\theta_j)}{\sin\theta_j} =\frac{\sin(j\pi+s\theta_j)}{\sin\theta_j}\\
		& =-\frac{\sin(j\pi-s\theta_j)}{\sin\theta_j} =-\frac{\sin((k-s+1)\theta_j)}{\sin\theta_j} = -P_{k-s}(\rho_j),
	\end{align*}
	thereby giving \eqref{eq:P-3}.
\end{proof}

Now we are ready to evaluate our Hankel determinants. By Lemma~\ref{le:Phi-eva},
\begin{align}
	\det_{0\leq i,j \leq N-1}\big(\Cat_{i+j+m,k}^{(0,c)}\big) = \det_{0\leq i,j \leq N-1}\big(\Phi(x^{i+j+m} P_k(x))\big).
\end{align} 
Hence, we take $p = P$ and $L = \Phi$ in Corollaries~\ref{coro:Kra-1} and \ref{coro:Kra-2}. Also, we choose
\begin{align*}
	u(x) = P_k(x) = (x-\rho_1)\cdots (x-\rho_k),
\end{align*}
with the latter equality coming from Lemma~\ref{le:P-factorization}. According to \eqref{eq:P-def},
\begin{align*}
	P_k(0) = F_{k+1}(-c) = (-1)^k F_{k+1}(c),
\end{align*}
where we have utilized \eqref{eq:F(-x)}. In particular, $P_k(0)$, as a function of $c$, is never identical to the zero function when $k\ge 0$. Finally, in Corollary~\ref{coro:Kra-2}, for the denominator of the left-hand side of \eqref{eq:Kra-2}, we recall from Proposition~\ref{prop:det-Phi} that
\begin{align*}
	\underset{{0\le i,j\le N-1}}{\det}\big(\Phi(x^{i+j})\big) = 1.
\end{align*}
In the rest of this section, we always write
\begin{align*}
	N=(k+1)n+r
\end{align*}
with $0\le r\le k$.

\subsection{Case $m=0$}

In this part, we reprove the result of Cigler and Krattenthaler given in Theorem~\ref{th:b0-m0}.

\subsubsection{Case $r\ne 0$}

Let $k\ge 1$ to make the choice of $r$ not vacuous. In the context of Corollary~\ref{coro:Kra-1}, we put $m=0$. Note that
\begin{align*}
	P_{N+(k-r)}(\rho_j) \overset{\eqref{eq:P-1}}{=} (-1)^{nj} P_k(\rho_j) \overset{\eqref{eq:P-2}}{=} 0.
\end{align*}
Since $1\le r\le k$, we have $k-r \in [0,k-1]$. The determinant in question then vanishes.

\subsubsection{Case $r= 0$}

Let $k\ge 0$. In the context of Corollary~\ref{coro:Kra-2}, we put $m=0$. Choose the permutation
\begin{align*}
	\sigma(a) = a,\qquad\qquad a\in [0,k-1],
\end{align*}
so that
\begin{align*}
	\sgn(\sigma) = 1.
\end{align*}
For $i\in [0,k-1]$, we have
\begin{align*}
	P_{N+\sigma(i)}(\rho_j) \overset{\eqref{eq:P-1}}{=} (-1)^{nj} P_{i}(\rho_j).
\end{align*}
Thus, by \eqref{eq:Kra-2},
\begin{align*}
	\det_{0\leq i,j \leq N-1}\big(\Phi(x^{i+j} P_k(x))\big) = \sgn(\sigma) (-1)^{kN} \prod_{j=1}^k (-1)^{nj} = (-1)^{\binom{k+1}{2}n},
\end{align*}
as desired.

\subsection{Case $m=1$}

In this part, we reprove the result of Cigler and Krattenthaler given in Theorem~\ref{th:b0-m1}.

\subsubsection{Case $r\ne 0,k$}

Let $k\ge 2$. In the context of Corollary~\ref{coro:Kra-1}, we put $m=1$. Note that
\begin{align*}
	P_{N+(k-r)}(\rho_j) \overset{\eqref{eq:P-1}}{=} (-1)^{nj} P_k(\rho_j) \overset{\eqref{eq:P-2}}{=} 0,
\end{align*}
and
\begin{align*}
	P_{N+(k-r+1)}(\rho_j) \overset{\eqref{eq:P-1}}{=} (-1)^{nj} P_{k+1}(\rho_j) \overset{\eqref{eq:P-3}}{=} -(-1)^{nj} P_{k-1}(\rho_j) \overset{\eqref{eq:P-1}}{=} -P_{N+(k-r-1)}(\rho_j).
\end{align*}
Since $1\le r\le k-1$, we have $S:=\{k-r,k-r+1\}\subset [0,k]$ and $\{k-r-1\}\subset [0,k]\backslash S$. The determinant in question then vanishes.

\subsubsection{Cases $r= 0$ and $k$}

These correspond to the $m=1$ case of Theorem~\ref{th:b0-m}, and we defer its proof to Section~\ref{sec:b0m}.

\subsection{Case $m=2$}

In this part, we prove Theorem~\ref{th:b0-m2}.

\subsubsection{Case $r\ne 0,k-1,k$}

Let $k\ge 3$. In the context of Corollary~\ref{coro:Kra-1}, we put $m=2$. Note that
\begin{align*}
	P_{N+(k-r)}(\rho_j) \overset{\eqref{eq:P-1}}{=} (-1)^{nj} P_k(\rho_j) \overset{\eqref{eq:P-2}}{=} 0,
\end{align*}
Also,
\begin{align*}
	P_{N+(k-r+1)}(\rho_j) \overset{\eqref{eq:P-1}}{=} (-1)^{nj} P_{k+1}(\rho_j) \overset{\eqref{eq:P-3}}{=} -(-1)^{nj} P_{k-1}(\rho_j) \overset{\eqref{eq:P-1}}{=} -P_{N+(k-r-1)}(\rho_j),
\end{align*}
and
\begin{align*}
	P_{N+(k-r+2)}(\rho_j) \overset{\eqref{eq:P-1}}{=} (-1)^{nj} P_{k+2}(\rho_j) \overset{\eqref{eq:P-3}}{=} -(-1)^{nj} P_{k-2}(\rho_j) \overset{\eqref{eq:P-1}}{=} -P_{N+(k-r-2)}(\rho_j).
\end{align*}
Since $1\le r\le k-2$, we have $S:=\{k-r,k-r+1,k-r+2\}\subset [0,k+1]$ and $\{k-r-1,k-r-2\}\subset [0,k+1]\backslash S$. The determinant in question then vanishes.

\subsubsection{Cases $r= 0$ and $k-1$}

These correspond to the $m=2$ case of Theorem~\ref{th:b0-m}, and we defer its proof to Section~\ref{sec:b0m}.

\subsubsection{Case $r= k$}

Let $k\ge 0$. In the context of Corollary~\ref{coro:Kra-2}, we put $m=2$. Choose the permutation
\begin{align*}
	\sigma(a) = \begin{cases}
		k-a, & \qquad\qquad a\in [0,k],\\
		k+1, & \qquad\qquad a=k+1,
	\end{cases}
\end{align*}
so that
\begin{align*}
	\sgn(\sigma) = (-1)^{\binom{k+1}{2}}.
\end{align*}
For $i\in [0,k-1]$, we have
\begin{align*}
	P_{N+\sigma(i)}(\rho_j) \overset{\eqref{eq:P-1}}{=} (-1)^{nj} P_{2k-i}(\rho_j) \overset{\eqref{eq:P-3}}{=} (-1)^{nj+1} P_{i}(\rho_j).
\end{align*}
In addition,
\begin{align*}
	P_{N+\sigma(k)}(\rho_j) \overset{\eqref{eq:P-1}}{=} (-1)^{nj} P_{k}(\rho_j) \overset{\eqref{eq:P-2}}{=} 0,
\end{align*}
and
\begin{align*}
	P_{N+\sigma(k+1)}(\rho_j) \overset{\eqref{eq:P-1}}{=} (-1)^{nj} P_{2k+1}(\rho_j) \overset{\eqref{eq:P-2}}{=} 0.
\end{align*}
Thus, by \eqref{eq:Kra-2},
\begin{align*}
	\det_{0\leq i,j \leq N-1}\big(\Phi(x^{i+j+2} P_k(x))\big)&= \sgn(\sigma) (-1)^{(k+2)N} \frac{1}{\big((-1)^kF_{k+1}(c)\big)^2} \prod_{j=1}^k (-1)^{nj+1}\\
	&\times \det\left.\begin{pmatrix}
		P_{(k+1)n+k}(x) & \frac{\ddd}{\ddd x}P_{(k+1)n+k}(x)\\[2pt]
		P_{(k+1)n+2k+1}(x) & \frac{\ddd}{\ddd x}P_{(k+1)n+2k+1}(x)
	\end{pmatrix}\right\vert_{x\mapsto 0}.
\end{align*}
Since $P_n(x) = F_{n+1}(x-c)$, we apply Proposition~\ref{prop:det-Fst} and find that the above determinant equals
\begin{align*}
	(k+1) F_{k+1}(-c) \sum_{j=1}^{n+1} F_{(k+1)j}(-c)^2 = (-1)^k (k+1) F_{k+1}(c) \sum_{j=1}^{n+1} F_{(k+1)j}(c)^2,
\end{align*}
where we have also used \eqref{eq:F(-x)}. We then arrive at the desired identity
\begin{align*}
	\det_{0\leq i,j \leq N-1}\big(\Phi(x^{i+j+2} P_k(x))\big) = (-1)^{\binom{k+1}{2}n + \binom{k}{2}}(k+1) F_{k+1}(c) \sum_{j=1}^{n+1} \left(\tfrac{F_{(k+1)j}(c)}{F_{k+1}(c)}\right)^2.
\end{align*}

\subsection{Generic $m$}\label{sec:b0m}

In this part, we prove Theorem~\ref{th:b0-m}. Throughout, fix $m\ge 1$.

\subsubsection{Case $r= 0$}\label{sec:b0m-a}

Let $k\ge m-1$. In the context of Corollary~\ref{coro:Kra-2}, we choose the permutation
\begin{align*}
	\sigma(a) = a,\qquad\qquad a\in [0,k+m-1],
\end{align*}
so that
\begin{align*}
	\sgn(\sigma) = 1.
\end{align*}
For $i\in [0,k-1]$, we have
\begin{align*}
	P_{N+\sigma(i)}(\rho_j) \overset{\eqref{eq:P-1}}{=} (-1)^{nj} P_{i}(\rho_j).
\end{align*}
In addition,
\begin{align*}
	P_{N+\sigma(k)}(\rho_j) \overset{\eqref{eq:P-1}}{=} (-1)^{nj} P_{k}(\rho_j) \overset{\eqref{eq:P-2}}{=} 0,
\end{align*}
and for $s\in [k+1, k+m-1]$,
\begin{align*}
	P_{N+\sigma(s)}(\rho_j) \overset{\eqref{eq:P-1}}{=} (-1)^{nj} P_{s}(\rho_j) \overset{\eqref{eq:P-3}}{=} -(-1)^{nj} P_{2k-s}(\rho_j) \overset{\eqref{eq:P-1}}{=} -P_{N+(2k-s)}(\rho_j).
\end{align*}
Thus, by \eqref{eq:Kra-2},
\begin{align*}
	\det_{0\leq i,j \leq N-1}\big(\Phi(x^{i+j+m} P_k(x))\big) = \sgn(\sigma) (-1)^{(k+m)N} \frac{\det M_P}{\big((-1)^kF_{k+1}(c)\big)^m} \prod_{j=1}^k (-1)^{nj},
\end{align*}
where
\begin{align}\label{eq:MP}
	M_P := \begin{pmatrix}
		\frac{\ddd^0}{\ddd x^0}\big\vert_{x\mapsto 0}\frac{P_{(k+1)n+k}(x)}{0!} & \cdots & \frac{\ddd^{m-1}}{\ddd x^{m-1}}\big\vert_{x\mapsto 0} \frac{P_{(k+1)n+k}(x)}{(m-1)!}\\[2pt]
		\frac{\ddd^0}{\ddd x^0}\big\vert_{x\mapsto 0}\frac{P_{(k+1)n+k\pm 1}(x)}{0!} & \cdots & \frac{\ddd^{m-1}}{\ddd x^{m-1}}\big\vert_{x\mapsto 0} \frac{P_{(k+1)n+k\pm 1}(x)}{(m-1)!}\\[2pt]
		\vdots & \ddots & \vdots\\[2pt]
		\frac{\ddd^0}{\ddd x^0}\big\vert_{x\mapsto 0}\frac{P_{(k+1)n+k\pm (m-1)}(x)}{0!} & \cdots & \frac{\ddd^{m-1}}{\ddd x^{m-1}}\big\vert_{x\mapsto 0} \frac{P_{(k+1)n+k\pm (m-1)}(x)}{(m-1)!}
	\end{pmatrix},
\end{align}
with $P_{(k+1)n+k\pm i}(x):= P_{(k+1)n+k+ i}(x) + P_{(k+1)n+k- i}(x)$. In view of \eqref{eq:F_n+-i},
\begin{align*}
	&\big(P_{(k+1)n+k}(x), P_{(k+1)n+k\pm 1}(x), \ldots, P_{(k+1)n+k\pm (m-1)}(x)\big)^{\mathsf{T}}\\
	&\qquad\qquad\qquad\qquad\qquad = P_{(k+1)n+k}(x) \big(1, L_1(x-c), \ldots, L_{m-1}(x-c)\big)^{\mathsf{T}}.
\end{align*}
As such, we can apply Lemma~\ref{le:det-derivative} and obtain
\begin{align*}
	\det M_P = P_{(k+1)n+k}(0)^m = \big((-1)^{(k+1)n+k}F_{(k+1)(n+1)}(c)\big)^m.
\end{align*}
Finally, we arrive at the desired identity
\begin{align*}
	\det_{0\leq i,j \leq N-1}\big(\Phi(x^{i+j+m} P_k(x))\big) = (-1)^{\binom{k+1}{2}n} \left(\frac{F_{(k+1)(n+1)}(c)}{F_{k+1}(c)}\right)^m.
\end{align*}

\subsubsection{Case $r= k-m+1$}\label{sec:b0m-b}

Let $k\ge m-1$. In the context of Corollary~\ref{coro:Kra-2}, we choose the permutation
\begin{align*}
	\sigma(a) = (k+m-1) - a,\qquad\qquad a\in [0,k+m-1],
\end{align*}
so that
\begin{align*}
	\sgn(\sigma) = (-1)^{\binom{k+m}{2}}.
\end{align*}
For $i\in [0,k-1]$, we have
\begin{align*}
	P_{N+\sigma(i)}(\rho_j) \overset{\eqref{eq:P-1}}{=} (-1)^{nj} P_{2k-i}(\rho_j) \overset{\eqref{eq:P-3}}{=} (-1)^{nj+1} P_{i}(\rho_j).
\end{align*}
In addition,
\begin{align*}
	P_{N+\sigma(k)}(\rho_j) \overset{\eqref{eq:P-1}}{=} (-1)^{nj} P_{k}(\rho_j) \overset{\eqref{eq:P-2}}{=} 0,
\end{align*}
and for $s\in [k+1, k+m-1]$,
\begin{align*}
	P_{N+\sigma(s)}(\rho_j) \overset{\eqref{eq:P-1}}{=} (-1)^{nj} P_{2k-s}(\rho_j) \overset{\eqref{eq:P-3}}{=} -(-1)^{nj} P_{s}(\rho_j) \overset{\eqref{eq:P-1}}{=} -P_{N+(-k+m-1+s)}(\rho_j).
\end{align*}
Thus, by \eqref{eq:Kra-2},
\begin{align*}
	\det_{0\leq i,j \leq N-1}\big(\Phi(x^{i+j+m} P_k(x))\big) = \sgn(\sigma) (-1)^{(k+m)N} \frac{\det M_P}{\big((-1)^kF_{k+1}(c)\big)^m} \prod_{j=1}^k (-1)^{nj+1},
\end{align*}
where $M_P$ is as in \eqref{eq:MP}. We then arrive at the desired identity
\begin{align*}
	\det_{0\leq i,j \leq N-1}\big(\Phi(x^{i+j+m} P_k(x))\big) = (-1)^{\binom{k+1}{2}n+\binom{k-m+1}{2}} \left(\frac{F_{(k+1)(n+1)}(c)}{F_{k+1}(c)}\right)^m.
\end{align*}

\section{Case $b=1$}\label{sec:b1}

Define a second family of polynomials
\begin{align}\label{eq:Q-def}
	Q_n(x) := F_{n+1}(x-c) - F_{n}(x-c),\qquad\qquad n\ge 0.
\end{align}
Again, $Q_n(x)$ is monic and of degree $n$. For an arbitrary polynomial $f(x)$ of degree $d$, there is also a \emph{unique} expansion
\begin{align}
	f(x) = \sum_{l=0}^d f_l^\ddagger Q_l(x).
\end{align}
We then define a \emph{linear} functional $\Psi$ on the ring of polynomials in $x$ by
\begin{align}
	\Psi(f) := f_0^\ddagger.
\end{align}

Now we establish the following evaluation of the linear functional $\Psi$.

\begin{lemma}\label{le:Psi-eva}
	For $n,k\ge 0$,
	\begin{align}\label{eq:Psi-eva}
		\Psi\big(x^n Q_k(x)\big) = \Cat_{n,k}^{(1,c)}.
	\end{align}
\end{lemma}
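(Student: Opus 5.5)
We follow the proof of Lemma~\ref{le:Phi-eva} verbatim, with the polynomials $Q_n$ in place of $P_n$. The plan is to show that $\Psi\big(x^n Q_k(x)\big)$ and $\Cat_{n,k}^{(1,c)}$ obey the same recurrence \eqref{eq:a-rec} with the same boundary values \eqref{eq:a-boundary}, for the weight sequence $\bs=(1+c,c,c,\ldots)$.

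\emph{Three-term recurrence for $Q_n$.} Since $Q_n(x)$ is a difference of two consecutive $F$'s evaluated at $x-c$, and \eqref{eq:F-rec} is linear, the $Q_n$ satisfy the same recurrence for $n\ge 1$:
\begin{align*}
	Q_{k+1}(x) = (x-c)Q_k(x) - Q_{k-1}(x),\qquad k\ge 1.
\end{align*}
The only place the pattern breaks is at the bottom. We have $Q_0(x)=F_1-F_0=1$ and $Q_1(x)=(x-c)-1$. This gives
\begin{align*}
	xQ_0(x)=Q_1(x)+(1+c)Q_0(x),
\end{align*}
whereas for $k\ge1$,
\begin{align*}
	xQ_k(x)=Q_{k+1}(x)+cQ_k(x)+Q_{k-1}(x).
\end{align*}
Setting $Q_{-1}(x):=0$, the two cases combine into
\begin{align*}
	xQ_k(x)=Q_{k+1}(x)+s_kQ_k(x)+Q_{k-1}(x),\qquad (s_0,s_1,s_2,\ldots)=(1+c,c,c,\ldots).
\end{align*}
This is exactly the weight sequence defining $\Cat^{(1,c)}_{n,k}$.

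\emph{Transfer to $\Psi$.} Multiply the combined relation by $x^{n-1}$ and apply the linear functional $\Psi$. This yields
\begin{align*}
	\Psi\big(x^nQ_k\big)=\Psi\big(x^{n-1}Q_{k+1}\big)+s_k\Psi\big(x^{n-1}Q_k\big)+\Psi\big(x^{n-1}Q_{k-1}\big)
\end{align*}
for $n\ge1$ and $k\ge0$, with the convention that the term involving $Q_{-1}=0$ vanishes. This matches the convention $\Cat_{n,k}=0$ for $k<0$.

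\emph{Initial values.} At $n=0$, the definition of $\Psi$ via the unique expansion in the basis $(Q_l)$ gives $\Psi(Q_k)=\delta_{k,0}$. This matches $\Cat^{(1,c)}_{0,k}$. Induction on $n$ then gives \eqref{eq:Psi-eva}.

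The only point needing care is the modified recurrence at $k=0$, which is where the extra $b=1$ in $s_0$ comes from. Everything else is routine.
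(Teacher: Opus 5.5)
Your proof is correct and is the paper's argument: derive $xQ_k(x)=Q_{k+1}(x)+s_kQ_k(x)+Q_{k-1}(x)$ with $(s_k)=(c+1,c,c,\ldots)$, apply the linear functional $\Psi$ after multiplying by $x^{n-1}$, and match the initial values $\Psi(Q_k)=\delta_{k,0}$. Your explicit check at $k=0$, where $Q_1(x)=(x-c)-1$, is more careful than the paper: its display \eqref{eq:Q-rec} with $Q_{-1}:=0$ is off by $-1$ at $k=0$ if read literally, though the paper still arrives at the correct $s'_0=c+1$.
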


\begin{proof}
	By \eqref{eq:F-rec},
	\begin{align}\label{eq:Q-rec}
		Q_{k+1}(x) = (x-c) Q_k(x) - Q_{k-1}(x).
	\end{align}
	Prepending $Q_{-1}(x) := 0$ and rearranging the terms, we have
	\begin{align*}
		x Q_k(x) = Q_{k+1}(x) + s'_k Q_k(x) + Q_{k-1}(x),
	\end{align*}
	where
	\begin{align*}
		(s'_0,s'_1,s'_2,\ldots) := (c+1,c,c,\ldots).
	\end{align*}
	Thus,
	\begin{align*}
		\Psi\big(x^n Q_k(x)\big) = \Psi\big(x^{n-1} Q_{k+1}(x)\big) + s'_k \Psi\big(x^{n-1} Q_k(x)\big) + \Psi\big(x^{n-1} Q_{k-1}(x)\big).
	\end{align*}
	In addition, $\Psi\big(x^0 Q_k(x)\big) = \delta_{k,0}$. It follows that $\Psi\big(x^n Q_k(x)\big)$ satisfies the same recurrence and initial values as $\Cat_{n,k}^{(1,c)}$, thereby showing the claimed equality.
\end{proof}

The next orthogonality relation can be shown in the same way as that for Lemma~\ref{le:P-orthogonal}.

\begin{lemma}
	The monic polynomials $(Q_n(x))_{n\ge 0}$ are orthogonal associated with the linear functional $\Psi$.
\end{lemma}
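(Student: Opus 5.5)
We mirror the proof of Lemma~\ref{le:P-orthogonal}, with $\Psi$ and $Q_n$ in place of $\Phi$ and $P_n$. The aim is to show that $\Psi\big(Q_r(x)Q_s(x)\big)=\delta_{r,s}$, so that each $\kappa_n=1$ is nonzero, as the definition of orthogonality requires. By symmetry we may assume $r\le s$.

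Since $Q_r$ is monic of degree $r$, we expand $Q_r(x)=\sum_{j=0}^r Q_{r,j}x^j$ with $Q_{r,r}=1$. By the linearity of $\Psi$ and Lemma~\ref{le:Psi-eva},
\begin{align*}
	\Psi\big(Q_r(x)Q_s(x)\big)=\sum_{j=0}^r Q_{r,j}\,\Psi\big(x^jQ_s(x)\big)=\sum_{j=0}^r Q_{r,j}\,\Cat_{j,s}^{(1,c)}.
\end{align*}

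The only ingredient to check is the triangularity of the array: $\Cat_{j,s}^{(\bs)}=0$ for $j<s$, and $\Cat_{s,s}^{(\bs)}=1$. This holds for any weight sequence $\bs$, in particular for $(c+1,c,c,\ldots)$, and we verify it by induction on $j$ using \eqref{eq:a-rec} and the boundary values \eqref{eq:a-boundary}.
\begin{itemize}
	\item For the vanishing, if $j<s$ then each of the three terms on the right of \eqref{eq:a-rec} has first index $j-1$ and second index at least $s-1>j-1$, so all vanish by the induction hypothesis. The base case $j=0<s$ is \eqref{eq:a-boundary}.
	\item For the diagonal, $\Cat_{s,s}=\Cat_{s-1,s-1}+s_s\Cat_{s-1,s}+\Cat_{s-1,s+1}$. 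The last two terms vanish by the previous bullet, and $\Cat_{s-1,s-1}=1$ by induction, starting from $\Cat_{0,0}=1$.
\end{itemize}
Combinatorially, this is just the fact, via \eqref{eq:M-gf}, that reaching height $s$ in $j$ steps needs $j\ge s$, and with $j=s$ only the all-up path works.

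Consequently, if $r<s$, every term in the sum has $j\le r<s$ and vanishes. If $r=s$, only the $j=r$ term survives, giving $Q_{r,r}\Cat_{r,r}^{(1,c)}=1$.

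There is no real obstacle here. The one point needing care is that the first weight $s'_0=c+1$ differs from the others; this does not matter, because the triangularity argument never uses the values of the weights. Alternatively, one can invoke Favard's theorem directly: the three-term recurrence \eqref{eq:Q-rec} gives $xQ_k=Q_{k+1}+s'_kQ_k+Q_{k-1}$ with all coefficients $b_n=1\ne0$, and the functional attached to that recurrence is determined by $\Psi(Q_k)=\delta_{k,0}$, which is exactly how $\Psi$ was defined.
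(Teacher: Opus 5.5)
Your proposal is correct and follows essentially the same approach as the paper. The paper proves this lemma simply by saying it goes as for Lemma~\ref{le:P-orthogonal}: expand $Q_r$ in monomials, apply Lemma~\ref{le:Psi-eva}, and use the triangularity $\Cat_{j,s}^{(1,c)}=0$ for $j<s$ and $\Cat_{s,s}^{(1,c)}=1$, which you additionally verify by an explicit induction on \eqref{eq:a-rec}.
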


Furthermore, the three-term recursive relation \eqref{eq:Q-rec} gives us the Hankel determinants for the moments $\Psi(x^n)$.

\begin{proposition}\label{prop:det-Psi}
	We have
	\begin{align}
		\underset{{0\le i,j\le N-1}}{\det}\big(\Psi(x^{i+j})\big) = 1.
	\end{align}
\end{proposition}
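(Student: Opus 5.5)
This is the $Q$-analogue of Proposition~\ref{prop:det-Phi}, and I would prove it the same way, by a direct application of Lemma~\ref{le:det-general} with $p=Q$ and $L=\Psi$.

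By the preceding lemma, the monic polynomials $(Q_n(x))_{n\ge0}$ are orthogonal with respect to $\Psi$. It remains to read off the coefficients $b_n$ of \eqref{eq:3TermRec} from the recurrence \eqref{eq:Q-rec}. Since $Q_{k+1}(x)=(x-c)Q_k(x)-Q_{k-1}(x)$ for $k\ge1$, we get $a_n=-c$ and $b_n=1$ for all $n\ge1$.

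Two checks are needed before applying the lemma.
\begin{enumerate}
\item The first step must match the form $Q_1(x)=x+a_0$. From the definition, $Q_1(x)=F_2(x-c)-F_1(x-c)=(x-c)-1$, so $a_0=-c-1$. This only affects $a_0$, and $a_0$ does not enter \eqref{eq:det-general}.
\item The recurrence \eqref{eq:Q-rec} must hold at $k=1$ with the genuine $Q_0=1$, not the auxiliary $Q_{-1}=0$. Indeed $Q_2(x)=F_3(x-c)-F_2(x-c)=(x-c)^2-1-(x-c)$, while $(x-c)Q_1(x)-Q_0(x)=(x-c)^2-(x-c)-1$. These agree, so $b_1=1$ as well.
\end{enumerate}

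For the normalization, Lemma~\ref{le:Psi-eva} gives $\Psi(x^0)=\Psi(x^0Q_0(x))=\Cat_{0,0}^{(1,c)}=1$.

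Substituting into \eqref{eq:det-general}, the Hankel determinant equals $1^N\cdot\prod_{i=1}^{N-1}b_i^{N-i}=1$. The only point requiring any care is the boundary check at $k=1$; there is no real obstacle.
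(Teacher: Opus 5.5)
Your proposal is correct and follows the paper's own route: the paper likewise feeds the three-term recurrence \eqref{eq:Q-rec} (so $b_n=1$) and $\Psi(x^0)=1$ into Lemma~\ref{le:det-general}, exactly as in Proposition~\ref{prop:det-Phi}. Your checks that $a_0=-c-1$ plays no role and that $b_1=1$ are also correct; they make explicit what the paper leaves implicit.
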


Finally, we factorize $Q_n(x)$.

\begin{lemma}\label{le:Q-factorization}
	For $n\ge 0$,
	\begin{align}\label{eq:Q-factorization}
		Q_n(x) = \prod_{j=1}^n (x-\varrho_{n,j}),
	\end{align}
	where
	\begin{align*}
		\varrho_{n,j} := c + 2\cos \tfrac{(2j-1) \pi}{2n+1},\qquad\qquad 1\le j\le n,
	\end{align*}
	are pairwise distinct.
\end{lemma}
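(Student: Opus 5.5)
The plan is to mirror the proof of Lemma~\ref{le:F-factorization} and Lemma~\ref{le:P-factorization}: exhibit $n$ pairwise distinct zeros of $Q_n(x)$ and use that $Q_n(x)$ is monic of degree $n$.

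First, substitute $y:=x-c$. Then $Q_n(x)=F_{n+1}(y)-F_n(y)=D_n(y)$ in the notation of \eqref{eq:D-def}. By the recurrence \eqref{eq:F-rec}, $F_{n+1}$ is monic of degree $n$ and $F_n$ has degree $n-1$. Hence $Q_n$ is monic of degree $n$ in $x$, as already noted after \eqref{eq:Q-def}.

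Next, evaluate at $y=2\cos\theta$ with $0<\theta<\pi$. By \eqref{eq:F-eva},
\begin{align*}
Q_n(c+2\cos\theta)=\frac{\sin((n+1)\theta)-\sin(n\theta)}{\sin\theta}=\frac{2\cos\big(\tfrac{(2n+1)\theta}{2}\big)\sin\tfrac{\theta}{2}}{2\sin\tfrac{\theta}{2}\cos\tfrac{\theta}{2}}=\frac{\cos\big(\tfrac{(2n+1)\theta}{2}\big)}{\cos\tfrac{\theta}{2}}.
\end{align*}
Equivalently, via the Binet form \eqref{eq:Binet-F}, $Q_n(c+z+z^{-1})=\frac{z^{2n+1}+1}{z^n(1+z)}$, which is the identity already used in proving \eqref{eq:D-1}. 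Since $\cos(\theta/2)\neq 0$ on $(0,\pi)$, the right-hand side vanishes exactly when $(2n+1)\theta/2$ is an odd multiple of $\pi/2$. This happens for $\theta_j=\frac{(2j-1)\pi}{2n+1}$, and these lie in $(0,\pi)$ precisely for $j=1,\ldots,n$. Thus each $\varrho_{n,j}=c+2\cos\theta_j$ is a zero of $Q_n$.

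For distinctness: the $\theta_j$ are distinct points of $(0,\pi)$, and cosine is strictly decreasing there, so the $\varrho_{n,j}$ are pairwise distinct. A monic polynomial of degree $n$ with $n$ distinct roots $\varrho_{n,1},\ldots,\varrho_{n,n}$ equals $\prod_{j=1}^n(x-\varrho_{n,j})$, which gives \eqref{eq:Q-factorization}. The case $n=0$ is trivial, both sides being $1$.

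There is no real obstacle. The only points needing care are the identity $\sin((n+1)\theta)-\sin(n\theta)=2\cos\big(\tfrac{(2n+1)\theta}{2}\big)\sin\tfrac{\theta}{2}$ and the fact that the denominator $\cos(\theta/2)$ is nonzero on $(0,\pi)$. The evaluation at $c$ treated as an indeterminate is harmless, since the identity is between polynomials in $x-c$.
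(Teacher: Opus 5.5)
Your proof is correct and follows essentially the same route as the paper: evaluate $Q_n(c+2\cos\vartheta)$ via \eqref{eq:F-eva}, simplify to $\cos\frac{(2n+1)\vartheta}{2}\big/\cos\frac{\vartheta}{2}$, and read off $n$ pairwise distinct roots of the monic degree-$n$ polynomial $Q_n$. You also spell out details the paper leaves implicit: the admissible range $1\le j\le n$, distinctness via monotonicity of cosine on $(0,\pi)$, and the trivial case $n=0$.
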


\begin{proof}
	Let $0< \vartheta < \pi$. By \eqref{eq:F-eva},
	\begin{align}\label{eq:Q-vartheta}
		Q_n(c+2\cos\vartheta) &= F_{n+1}(2\cos \vartheta) - F_{n}(2\cos \vartheta)\notag\\
		& = \frac{\sin((n+1)\vartheta) - \sin (n\vartheta)}{\sin \vartheta}  = \frac{\cos \frac{(2n+1)\vartheta}{2}}{\cos \frac{\vartheta}{2}}.
	\end{align}
	Thus, the pairwise distinct numbers $\varrho_{n,j}$ are roots of $Q_n(x)$, thereby implying the claimed factorization.
\end{proof}

For the moment, we fix $k\ge 0$ and define
\begin{align}
	\varrho_{j} := c + 2\cos \tfrac{(2j-1) \pi}{2k+1},\qquad\qquad 1\le j\le k.
\end{align}
In light of \eqref{eq:Q-vartheta}, we find parallel properties to Lemma~\ref{le:P-property}.

\begin{lemma}\label{le:Q-property}
	Let $j$ with $1\le j\le k$ be arbitrary. For $r\ge 0$,
	\begin{align}\label{eq:Q-1}
		Q_{(2k+1)n+r}(\varrho_j) = (-1)^n Q_r(\varrho_j).
	\end{align}
	Also,
	\begin{align}\label{eq:Q-2}
		Q_k(\varrho_j) = 0,
	\end{align}
	and for $s$ with $1\le s\le k$,
	\begin{align}\label{eq:Q-3}
		Q_{k+s}(\varrho_j) = -Q_{k-s}(\varrho_j).
	\end{align}
\end{lemma}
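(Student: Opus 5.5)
The plan is to mirror the proof of Lemma~\ref{le:P-property}, working throughout with the trigonometric evaluation \eqref{eq:Q-vartheta}. Put $\vartheta_j := \frac{(2j-1)\pi}{2k+1}$ for $1\le j\le k$. Then $0<\vartheta_j<\pi$ and $\varrho_j = c+2\cos\vartheta_j$, so that for every $N\ge 0$
\begin{align*}
	Q_N(\varrho_j) = \frac{\cos\frac{(2N+1)\vartheta_j}{2}}{\cos\frac{\vartheta_j}{2}}.
\end{align*}
The denominator does not vanish because $\vartheta_j/2\in(0,\pi/2)$. All three claims then reduce to elementary identities for the cosine in the numerator.

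For \eqref{eq:Q-1}, I take $N=(2k+1)n+r$. Then
\begin{align*}
	\frac{(2N+1)\vartheta_j}{2} = \frac{(2r+1)\vartheta_j}{2} + n(2k+1)\vartheta_j = \frac{(2r+1)\vartheta_j}{2} + n(2j-1)\pi.
\end{align*}
Since $(2j-1)$ is odd, shifting the argument of the cosine by $n(2j-1)\pi$ multiplies it by $(-1)^{n(2j-1)}=(-1)^n$. This is exactly why the sign here does not depend on $j$, unlike the factor $(-1)^{nj}$ in \eqref{eq:P-1}.

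For \eqref{eq:Q-2}, I have $(2k+1)\vartheta_j/2 = (2j-1)\pi/2$, whose cosine vanishes. Alternatively, this follows at once from the factorization \eqref{eq:Q-factorization}, since $\varrho_j=\varrho_{k,j}$.

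For \eqref{eq:Q-3}, I compute
\begin{align*}
	\frac{(2k+2s+1)\vartheta_j}{2} = \frac{(2j-1)\pi}{2} + s\vartheta_j, \qquad \frac{(2k-2s+1)\vartheta_j}{2} = \frac{(2j-1)\pi}{2} - s\vartheta_j.
\end{align*}
Writing $\alpha=(2j-1)\pi/2$, we have $\cos\alpha=0$, so $\cos(\alpha+\beta) = -\sin\alpha\sin\beta = -\cos(\alpha-\beta)$. This gives $Q_{k+s}(\varrho_j)=-Q_{k-s}(\varrho_j)$ for all $1\le s\le k$. Note that $k-s\ge 0$, so the identity \eqref{eq:Q-vartheta} applies to both indices.

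There is no real obstacle here. The only points needing care are that $\vartheta_j\in(0,\pi)$, so that \eqref{eq:Q-vartheta} is valid, and the parity bookkeeping that produces the $j$-independent sign $(-1)^n$.
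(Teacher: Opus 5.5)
Your proposal is correct and follows the same approach as the paper: the paper does not write out a separate proof but obtains these properties from the evaluation \eqref{eq:Q-vartheta}, in parallel with Lemma~\ref{le:P-property}. Your explicit angle bookkeeping fills in exactly those steps, including the observation that $(-1)^{n(2j-1)}=(-1)^n$ makes the sign in \eqref{eq:Q-1} independent of $j$.
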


Now we are ready to evaluate our Hankel determinants. By Lemma~\ref{le:Psi-eva},
\begin{align}
	\det_{0\leq i,j \leq N-1}\big(\Cat_{i+j+m,k}^{(1,c)}\big) = \det_{0\leq i,j \leq N-1}\big(\Psi(x^{i+j+m} Q_k(x))\big).
\end{align} 
Hence, we take $p = Q$ and $L = \Psi$ in Corollaries~\ref{coro:Kra-1} and \ref{coro:Kra-2}. Also, we choose
\begin{align*}
	u(x) = Q_k(x) = (x-\varrho_1)\cdots (x-\varrho_k),
\end{align*}
with the latter equality coming from Lemma~\ref{le:Q-factorization}. Moreover, $Q_k(0)$, as a function of $c$, is never identical to the zero function when $k\ge 0$. Finally, in Corollary~\ref{coro:Kra-2}, for the denominator of the left-hand side of \eqref{eq:Kra-2}, we recall from Proposition~\ref{prop:det-Psi} that
\begin{align*}
	\underset{{0\le i,j\le N-1}}{\det}\big(\Psi(x^{i+j})\big) = 1.
\end{align*}
In the rest of this section, we always write
\begin{align*}
	N=(2k+1)n+r
\end{align*}
with $0\le r\le 2k$.

\subsection{Case $m=0$}

In this part, we prove Theorem~\ref{th:b1-m0}.

\subsubsection{Case $r\ne 0,k+1$}

Let $k\ge 1$ to make the choice of $r$ not vacuous. In the context of Corollary~\ref{coro:Kra-1}, we put $m=0$. If $1\le r\le k$,
\begin{align*}
	Q_{N+(k-r)}(\varrho_j) \overset{\eqref{eq:Q-1}}{=} (-1)^n Q_k(\varrho_j) \overset{\eqref{eq:Q-2}}{=} 0,
\end{align*}
with $k-r \in [0,k-1]$. If $k+2\le r\le 2k$,
\begin{align*}
	Q_{N+(2k-r+1)}(\varrho_j) \overset{\eqref{eq:Q-1}}{=} (-1)^{n+1} Q_{0}(\varrho_j) \overset{\eqref{eq:Q-3}}{=} (-1)^n Q_{2k}(\varrho_{j}) \overset{\eqref{eq:Q-1}}{=} Q_{N+(2k-r)}(\varrho_j),
\end{align*}
with $2k-r+1\in [0,k-1]$ and $2k-r\in [0,k-1]\backslash\{2k-r+1\}$. The determinant in question then vanishes.

\subsubsection{Case $r= 0$}

Let $k\ge 0$. In the context of Corollary~\ref{coro:Kra-2}, we put $m=0$. Choose the permutation
\begin{align*}
	\sigma(a) = a,\qquad\qquad a\in [0,k-1],
\end{align*}
so that
\begin{align*}
	\sgn(\sigma) = 1.
\end{align*}
For $i\in [0,k-1]$, we have
\begin{align*}
	Q_{N+\sigma(i)}(\varrho_j) \overset{\eqref{eq:Q-1}}{=} (-1)^{n} Q_{i}(\varrho_j).
\end{align*}
Thus, by \eqref{eq:Kra-2},
\begin{align*}
	\det_{0\leq i,j \leq N-1}\big(\Psi(x^{i+j} Q_k(x))\big) = \sgn(\sigma) (-1)^{kN} \prod_{j=1}^k (-1)^{n} = 1,
\end{align*}
as desired.

\subsubsection{Case $r= k+1$}

Let $k\ge 1$. In the context of Corollary~\ref{coro:Kra-2}, we put $m=0$. Choose the permutation
\begin{align*}
	\sigma(a) = (k-1) - a,\qquad\qquad a\in [0,k-1],
\end{align*}
so that
\begin{align*}
	\sgn(\sigma) = (-1)^{\binom{k}{2}}.
\end{align*}
For $i\in [0,k-1]$, we have
\begin{align*}
	Q_{N+\sigma(i)}(\varrho_j) \overset{\eqref{eq:Q-1}}{=} (-1)^{n} Q_{2k-i}(\varrho_j) \overset{\eqref{eq:Q-3}}{=} (-1)^{n+1} Q_{i}(\varrho_j).
\end{align*}
Thus, by \eqref{eq:Kra-2},
\begin{align*}
	\det_{0\leq i,j \leq N-1}\big(\Psi(x^{i+j} Q_k(x))\big) = \sgn(\sigma) (-1)^{kN} \prod_{j=1}^k (-1)^{n+1} = (-1)^{\binom{k+1}{2}},
\end{align*}
as desired.

\subsection{Case $m=1$}

In this part, we prove Theorem~\ref{th:b1-m1}.

\subsubsection{Case $r\ne 0,k,k+1,2k$}

Let $k\ge 2$. In the context of Corollary~\ref{coro:Kra-1}, we put $m=1$. If $1\le r\le k-1$,
\begin{align*}
	Q_{N+(k-r)}(\varrho_j) \overset{\eqref{eq:Q-1}}{=} (-1)^n Q_k(\varrho_j) \overset{\eqref{eq:Q-2}}{=} 0,
\end{align*}
and
\begin{align*}
	Q_{N+(k-r+1)}(\varrho_j) &\overset{\eqref{eq:Q-1}}{=} (-1)^{n} Q_{k+1}(\varrho_j)\\
	& \overset{\eqref{eq:Q-3}}{=} -(-1)^{n} Q_{k-1}(\varrho_j) \overset{\eqref{eq:Q-1}}{=} -Q_{N+(k-r-1)}(\varrho_j),
\end{align*}
with $S:=\{k-r,k-r+1\}\subset [0,k]$ and $\{k-r-1\}\subset [0,k]\backslash S$. If $k+2\le r\le 2k-1$,
\begin{align*}
	Q_{N+(2k-r+1)}(\varrho_j) \overset{\eqref{eq:Q-1}}{=} (-1)^{n+1} Q_{0}(\varrho_j) \overset{\eqref{eq:Q-3}}{=} (-1)^n Q_{2k}(\varrho_{j}) \overset{\eqref{eq:Q-1}}{=} Q_{N+(2k-r)}(\varrho_j),
\end{align*}
and
\begin{align*}
	Q_{N+(2k-r+2)}(\varrho_j) &\overset{\eqref{eq:Q-1}}{=} (-1)^{n+1} Q_{1}(\varrho_j)\\
	& \overset{\eqref{eq:Q-3}}{=} (-1)^n Q_{2k-1}(\varrho_{j}) \overset{\eqref{eq:Q-1}}{=} Q_{N+(2k-r-1)}(\varrho_j),
\end{align*}
with $S:=\{2k-r+1,2k-r+2\}\subset [0,k]$ and $\{2k-r,2k-r-1\}\subset [0,k]\backslash S$. The determinant in question then vanishes.

\subsubsection{Cases $r= 0$, $k$, $k+1$, and $2k$}

These correspond to the $m=1$ case of Theorem~\ref{th:b1-m}, and we defer its proof to Section~\ref{sec:b1m}.

\subsection{Case $m=2$}

In this part, we prove Theorem~\ref{th:b1-m2}.

\subsubsection{Case $r\ne 0,k,k\pm 1,2k,2k-1$}

Let $k\ge 3$. In the context of Corollary~\ref{coro:Kra-1}, we put $m=1$. If $1\le r\le k-2$,
\begin{align*}
	Q_{N+(k-r)}(\varrho_j) \overset{\eqref{eq:Q-1}}{=} (-1)^n Q_k(\varrho_j) \overset{\eqref{eq:Q-2}}{=} 0,
\end{align*}
and
\begin{align*}
	Q_{N+(k-r+1)}(\varrho_j) &\overset{\eqref{eq:Q-1}}{=} (-1)^{n} Q_{k+1}(\varrho_j)\\
	& \overset{\eqref{eq:Q-3}}{=} -(-1)^{n} Q_{k-1}(\varrho_j) \overset{\eqref{eq:Q-1}}{=} -Q_{N+(k-r-1)}(\varrho_j),
\end{align*}
and
\begin{align*}
	Q_{N+(k-r+2)}(\varrho_j) &\overset{\eqref{eq:Q-1}}{=} (-1)^{n} Q_{k+2}(\varrho_j)\\
	& \overset{\eqref{eq:Q-3}}{=} -(-1)^{n} Q_{k-2}(\varrho_j) \overset{\eqref{eq:Q-1}}{=} -Q_{N+(k-r-2)}(\varrho_j),
\end{align*}
with $S:=\{k-r,k-r+1,k-r+2\}\subset [0,k+1]$ and $\{k-r-1,k-r-2\}\subset [0,k+1]\backslash S$. If $k+2\le r\le 2k-2$,
\begin{align*}
	Q_{N+(2k-r+1)}(\varrho_j) \overset{\eqref{eq:Q-1}}{=} (-1)^{n+1} Q_{0}(\varrho_j) \overset{\eqref{eq:Q-3}}{=} (-1)^n Q_{2k}(\varrho_{j}) \overset{\eqref{eq:Q-1}}{=} Q_{N+(2k-r)}(\varrho_j),
\end{align*}
and
\begin{align*}
	Q_{N+(2k-r+2)}(\varrho_j) &\overset{\eqref{eq:Q-1}}{=} (-1)^{n+1} Q_{1}(\varrho_j)\\
	& \overset{\eqref{eq:Q-3}}{=} (-1)^n Q_{2k-1}(\varrho_{j}) \overset{\eqref{eq:Q-1}}{=} Q_{N+(2k-r-1)}(\varrho_j),
\end{align*}
and
\begin{align*}
	Q_{N+(2k-r+3)}(\varrho_j) &\overset{\eqref{eq:Q-1}}{=} (-1)^{n+1} Q_{2}(\varrho_j)\\
	& \overset{\eqref{eq:Q-3}}{=} (-1)^n Q_{2k-2}(\varrho_{j}) \overset{\eqref{eq:Q-1}}{=} Q_{N+(2k-r-2)}(\varrho_j),
\end{align*}
with $S:=\{2k-r+1,2k-r+2,2k-r+3\}\subset [0,k+1]$ and $\{2k-r,2k-r-1,2k-r-2\}\subset [0,k+1]\backslash S$. The determinant in question then vanishes.

\subsubsection{Cases $r= 0$, $k-1$, $k+1$, and $2k-1$}

These correspond to the $m=2$ case of Theorem~\ref{th:b1-m}, and we defer its proof to Section~\ref{sec:b1m}.

\subsubsection{Case $r= k$ or $2k$}

Here our objective is to show \eqref{eq:b1-m2-sum}. We start by establishing individual evaluations with respect to the two residue classes in question.

\begin{theorem}\label{th:b1-rk}
	Let $k\ge 1$ and write $N=(2k+1)n+k$. Then for $c$ an indeterminate,
	\begin{align}\label{eq:b1-rk}
		\det_{0\leq i,j \leq N-1}\big(\Cat_{i+j+2,k}^{(1,c)}\big) = (-1)^{\binom{k+1}{2}}\frac{\det W}{Q_k(0)^2},
	\end{align}
	where
	\begin{align*}
		W := \left.\begin{psmallmatrix}
			Q_{(2k+1)n+k}(x) & \frac{\ddd}{\ddd x}Q_{(2k+1)n+k}(x)\\[2pt]
			Q_{(2k+1)n+2k+1}(x) - Q_{(2k+1)n+2k}(x) & \frac{\ddd}{\ddd x}\big(Q_{(2k+1)n+2k+1}(x) - Q_{(2k+1)n+2k}(x)\big)
		\end{psmallmatrix}\right\vert_{x\mapsto 0}.
	\end{align*}
\end{theorem}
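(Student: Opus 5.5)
The plan is to apply Corollary~\ref{coro:Kra-2} with $m=2$, $p=Q$, $L=\Psi$ and $u(x)=Q_k(x)=(x-\varrho_1)\cdots(x-\varrho_k)$, exactly as in the $b=0$, $r=k$ computation. By Lemma~\ref{le:Psi-eva} and Proposition~\ref{prop:det-Psi}, the left-hand side of \eqref{eq:b1-rk} is the ratio on the left of \eqref{eq:Kra-2}, since the denominator equals $1$. The only work is to pick the permutation $\sigma\in\mathfrak{S}_{k+2}$ and to check the hypotheses of the corollary using Lemma~\ref{le:Q-property}.

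\textbf{Step 1: values at the roots.} With $N=(2k+1)n+k$ and $1\le j\le k$:
\begin{itemize}
\item For $1\le i\le k$, using \eqref{eq:Q-1} and \eqref{eq:Q-3},
\begin{align*}
Q_{N+i}(\varrho_j)=(-1)^nQ_{k+i}(\varrho_j)=-(-1)^nQ_{k-i}(\varrho_j).
\end{align*}
\item For $i=0$, \eqref{eq:Q-2} gives $Q_N(\varrho_j)=(-1)^nQ_k(\varrho_j)=0$.
\item For $i=k+1$, \eqref{eq:Q-1} with $n=1$, $r=0$ gives $Q_{2k+1}(\varrho_j)=-Q_0(\varrho_j)$. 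Also $Q_{2k}(\varrho_j)=-Q_0(\varrho_j)$ by \eqref{eq:Q-3}. Hence $Q_{N+k+1}(\varrho_j)=Q_{N+k}(\varrho_j)$.
\end{itemize}

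\textbf{Step 2: the permutation.} Take
\begin{align*}
\sigma(a)=k-a\quad (a\in[0,k]),\qquad \sigma(k+1)=k+1,
\end{align*}
so that $\sgn(\sigma)=(-1)^{\binom{k+1}{2}}$. For $i\in[0,k-1]$ we then have $Q_{N+\sigma(i)}(\varrho_j)=f(i)g(j)Q_i(\varrho_j)$ with $f(i)=-(-1)^n$ and $g\equiv1$. For the two extra rows:
\begin{itemize}
\item $s=k$ (so $\sigma(k)=0$): take $c_k=0$, since $Q_N(\varrho_j)=0$.
\item $s=k+1$ (so $\sigma(k+1)=k+1$): take $\tau_{k+1}=0$, because $\sigma(0)=k$, and $c_{k+1}=1$, by the last item of Step 1.
\end{itemize}
With these choices, the $2\times2$ determinant in \eqref{eq:Kra-2} is exactly $\det W$. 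Its rows are the Taylor coefficients at $0$, of orders $0$ and $1$, of $Q_N(x)$ and of $Q_{N+k+1}(x)-Q_{N+k}(x)$. Finally $u(0)^m=Q_k(0)^2$, which is not identically zero as a function of $c$.

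\textbf{Step 3: collecting signs.} The remaining factors are:
\begin{itemize}
\item $(-1)^{(k+2)N}=(-1)^{kN}=(-1)^{k(2k+1)n+k^2}=(-1)^{kn+k}$;
\item $\prod_{i=0}^{k-1}f(i)=(-1)^{k(n+1)}$;
\item $\prod_j g(j)=1$.
\end{itemize}
The first two cancel, leaving $\sgn(\sigma)=(-1)^{\binom{k+1}{2}}$, which gives \eqref{eq:b1-rk}.

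The only delicate point is the identification $Q_{N+k+1}(\varrho_j)=Q_{N+k}(\varrho_j)$ in Step 1. It is this identity that makes the second row of $W$ a difference rather than a single polynomial, and its signs should be double-checked. The condition $k\ge1$ ensures that the index sets above are nonempty and that Lemma~\ref{le:Q-property} applies.
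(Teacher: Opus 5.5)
Your proposal is correct and follows the paper's proof essentially line for line. You apply Corollary~\ref{coro:Kra-2} with $m=2$ and the same permutation (reversal on $[0,k]$, fixing $k+1$), take $c_k=0$ and $\tau_{k+1}=0$, $c_{k+1}=1$ via $Q_{N+k+1}(\varrho_j)=Q_{N+k}(\varrho_j)$, and the factors $(-1)^{kN}$ and $\prod_{i=0}^{k-1} f(i)$ cancel to leave $(-1)^{\binom{k+1}{2}}$.
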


\begin{proof}
	In the context of Corollary~\ref{coro:Kra-2}, we put $m=2$. Choose the permutation
	\begin{align*}
		\sigma(a) = \begin{cases}
			k - a,& \qquad\qquad a\in [0,k],\\
			k+1,& \qquad\qquad a=k+1,
		\end{cases}
	\end{align*}
	so that
	\begin{align*}
		\sgn(\sigma) = (-1)^{\binom{k+1}{2}}.
	\end{align*}
	For $i\in [0,k-1]$, we have
	\begin{align*}
		Q_{N+\sigma(i)}(\varrho_j) \overset{\eqref{eq:Q-1}}{=} (-1)^{n} Q_{2k-i}(\varrho_j) \overset{\eqref{eq:Q-3}}{=} (-1)^{n+1} Q_{i}(\varrho_j).
	\end{align*}
	In addition,
	\begin{align*}
		Q_{N+\sigma(k)}(\varrho_j) \overset{\eqref{eq:Q-1}}{=} (-1)^{n} Q_{k}(\varrho_j) \overset{\eqref{eq:Q-2}}{=} 0,
	\end{align*}
	and
	\begin{align*}
		Q_{N+\sigma(k+1)}(\varrho_j) \overset{\eqref{eq:Q-1}}{=} (-1)^{n+1} Q_{0}(\varrho_j) \overset{\eqref{eq:Q-3}}{=} (-1)^{n} Q_{2k}(\varrho_j) \overset{\eqref{eq:Q-1}}{=} Q_{N+k}(\varrho_j).
	\end{align*}
	Thus, by \eqref{eq:Kra-2},
	\begin{align*}
		\det_{0\leq i,j \leq N-1}\big(\Psi(x^{i+j+2} Q_k(x))\big) = \sgn(\sigma) (-1)^{(k+2)N} \frac{\det W}{Q_k(0)^2} \prod_{j=1}^k (-1)^{n+1},
	\end{align*}
	where $W$ is as in Theorem~\ref{th:b1-rk}. We then arrive at the desired identity.
\end{proof}

\begin{theorem}\label{th:b1-r2k}
	Let $k\ge 1$ and write $N=(2k+1)n+2k$. Then for $c$ an indeterminate,
	\begin{align}\label{eq:b1-r2k}
		\det_{0\leq i,j \leq N-1}\big(\Cat_{i+j+2,k}^{(1,c)}\big) = \frac{\det W'}{Q_k(0)^2},
	\end{align}
	where
	\begin{align*}
		W' := \left.\begin{psmallmatrix}
			Q_{(2k+1)n+3k+1}(x) & \frac{\ddd}{\ddd x}Q_{(2k+1)n+3k+1}(x)\\[2pt]
			Q_{(2k+1)n+2k+1}(x) - Q_{(2k+1)n+2k}(x) & \frac{\ddd}{\ddd x}\big(Q_{(2k+1)n+2k+1}(x) - Q_{(2k+1)n+2k}(x)\big)
		\end{psmallmatrix}\right\vert_{x\mapsto 0}.
	\end{align*}
\end{theorem}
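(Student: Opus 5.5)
The plan is to follow the proof of Theorem~\ref{th:b1-rk} almost verbatim: apply Corollary~\ref{coro:Kra-2} with $p=Q$, $L=\Psi$, $u(x)=Q_k(x)$ and $m=2$, and use $\det_{0\le i,j\le N-1}\big(\Psi(x^{i+j})\big)=1$ from Proposition~\ref{prop:det-Psi}. The only difference from Theorem~\ref{th:b1-rk} is the choice of the permutation $\sigma$.

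The first step is to re-index. With $N=(2k+1)n+2k$ we have $N+a=(2k+1)(n+1)+(a-1)$. Hence for $a\in[1,k+1]$, \eqref{eq:Q-1} gives
\begin{align*}
Q_{N+a}(\varrho_j)=(-1)^{n+1}Q_{a-1}(\varrho_j).
\end{align*}
For $a=0$, we use \eqref{eq:Q-1} and then \eqref{eq:Q-3} with $s=k$:
\begin{align*}
Q_{N}(\varrho_j)=(-1)^nQ_{2k}(\varrho_j)=(-1)^{n+1}Q_0(\varrho_j)=Q_{N+1}(\varrho_j).
\end{align*}

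This suggests the cyclic permutation $\sigma(i)=i+1$ for $i\in[0,k]$ and $\sigma(k+1)=0$. It is a $(k+2)$-cycle, so $\sgn(\sigma)=(-1)^{k+1}$. Its properties are as follows.
\begin{itemize}
\item For $i\in[0,k-1]$, $Q_{N+\sigma(i)}(\varrho_j)=(-1)^{n+1}Q_i(\varrho_j)$. So in Corollary~\ref{coro:Kra-2} we may take $f\equiv 1$ and $g(j)=(-1)^{n+1}$.
\item For $s=k$, $Q_{N+k+1}(\varrho_j)=(-1)^{n+1}Q_k(\varrho_j)=0$ by \eqref{eq:Q-2}, so $c_k=0$. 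The corresponding polynomial is $Q_{(2k+1)n+3k+1}$, which is the first row of $W'$.
\item For $s=k+1$, $Q_{N+\sigma(k+1)}(\varrho_j)=Q_{N+\sigma(0)}(\varrho_j)$, so $\tau_{k+1}=0$ and $c_{k+1}=1$. The corresponding polynomial is $Q_{(2k+1)n+2k}-Q_{(2k+1)n+2k+1}$, which is the negative of the second row of $W'$.
\end{itemize}

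Substituting into \eqref{eq:Kra-2}, the right-hand side becomes
\begin{align*}
(-1)^{k+1}\,(-1)^{(k+2)N}\,(-1)^{k(n+1)}\,(-1)\,\frac{\det W'}{Q_k(0)^2},
\end{align*}
where the last factor $-1$ comes from flipping the sign of the second row. The only real obstacle is this sign bookkeeping. Since $N\equiv n \pmod 2$, we have $(-1)^{(k+2)N}=(-1)^{kn}$. The total sign is therefore
\begin{align*}
(-1)^{k+1+kn+kn+k+1}=1,
\end{align*}
which is exactly \eqref{eq:b1-r2k}.

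Before finalizing, I would double-check that the order of the rows of $D$ (first $s=k$, then $s=k+1$) matches the order of the rows of $W'$. I would also check that the case $k=1$ is covered, since then $[0,k-1]=\{0\}$ and the argument goes through unchanged.
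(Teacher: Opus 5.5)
Your proposal is correct and matches the paper's proof: you use the same cyclic permutation $\sigma(a)=a+1$ on $[0,k]$ with $\sigma(k+1)=0$ and $\sgn(\sigma)=(-1)^{k+1}$, the same reductions via \eqref{eq:Q-1}--\eqref{eq:Q-3}, and the same sign flip of the second row ($\det\tilde{W}'=-\det W'$). Your bookkeeping giving a total sign of $+1$ also agrees with the paper.
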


\begin{proof}
	In the context of Corollary~\ref{coro:Kra-2}, we put $m=2$. Choose the permutation
	\begin{align*}
		\sigma(a) = \begin{cases}
			a+1,& \qquad\qquad a\in [0,k],\\
			0,& \qquad\qquad a=k+1,
		\end{cases}
	\end{align*}
	so that
	\begin{align*}
		\sgn(\sigma) = (-1)^{k+1}.
	\end{align*}
	For $i\in [0,k-1]$, we have
	\begin{align*}
		Q_{N+\sigma(i)}(\varrho_j) \overset{\eqref{eq:Q-1}}{=} (-1)^{n+1} Q_{i}(\varrho_j).
	\end{align*}
	In addition,
	\begin{align*}
		Q_{N+\sigma(k)}(\varrho_j) \overset{\eqref{eq:Q-1}}{=} (-1)^{n+1} Q_{k}(\varrho_j) \overset{\eqref{eq:Q-2}}{=} 0,
	\end{align*}
	and
	\begin{align*}
		Q_{N+\sigma(k+1)}(\varrho_j) \overset{\eqref{eq:Q-1}}{=} (-1)^{n} Q_{2k}(\varrho_j) \overset{\eqref{eq:Q-3}}{=} (-1)^{n+1} Q_{0}(\varrho_j) \overset{\eqref{eq:Q-1}}{=} Q_{N+1}(\varrho_j).
	\end{align*}
	Thus, by \eqref{eq:Kra-2},
	\begin{align*}
		\det_{0\leq i,j \leq N-1}\big(\Psi(x^{i+j+2} Q_k(x))\big) = \sgn(\sigma) (-1)^{(k+2)N} \frac{\det \tilde{W}'}{Q_k(0)^2} \prod_{j=1}^k (-1)^{n+1},
	\end{align*}
	where
	\begin{align*}
		\tilde{W}' = \left.\begin{psmallmatrix}
			Q_{(2k+1)n+3k+1}(x) & \frac{\ddd}{\ddd x}Q_{(2k+1)n+3k+1}(x)\\[2pt]
			Q_{(2k+1)n+2k}(x) - Q_{(2k+1)n+2k+1}(x) & \frac{\ddd}{\ddd x}\big(Q_{(2k+1)n+2k}(x) - Q_{(2k+1)n+2k+1}(x)\big)
		\end{psmallmatrix}\right\vert_{x\mapsto 0}.
	\end{align*}
	Note that $\det \tilde{W}' = -\det W'$ by multiplying the second row by $-1$. We then arrive at the desired identity.
\end{proof}

In light of \eqref{eq:b1-rk} and \eqref{eq:b1-r2k}, we have
\begin{align*}
	\LHS\eqref{eq:b1-m2-sum} = \frac{\det W^\dagger}{Q_k(0)^2},
\end{align*}
where
\begin{align*}
	W^\dagger := \left.\begin{psmallmatrix}
		Q_{(2k+1)n+3k+1}(x) - Q_{(2k+1)n+k}(x) & \frac{\ddd}{\ddd x}\big(Q_{(2k+1)n+3k+1}(x) - Q_{(2k+1)n+k}(x)\big)\\[2pt]
		Q_{(2k+1)n+2k+1}(x) - Q_{(2k+1)n+2k}(x) & \frac{\ddd}{\ddd x}\big(Q_{(2k+1)n+2k+1}(x) - Q_{(2k+1)n+2k}(x)\big)
	\end{psmallmatrix}\right\vert_{x\mapsto 0}.
\end{align*}
It follows from \eqref{eq:D-3}, in which \eqref{eq:F-L} should be applied, that
\begin{align*}
	\det M^\dagger &= Q_k(0)^2 \big(L_{k+1}(-c) - L_k(-c)\big)^2 \left(\frac{F_{(2k+1)(n+1)}(-c)}{F_{2k+1}(-c)}\right)^2\\
	&\quad\times \det\left.\begin{pmatrix}
		F_{k+1}(x-c) + F_k(x-c) & \frac{\ddd}{\ddd x} \big(F_{k+1}(x-c) + F_k(x-c)\big)\\
		F_{1}(x-c) + F_0(x-c) & \frac{\ddd}{\ddd x} \big(F_{1}(x-c) + F_0(x-c)\big)
	\end{pmatrix}\right\vert_{x\mapsto 0}.
\end{align*}
The determinant on the right-hand side of the above equals
\begin{align*}
	-\big(F'_{k+1}(-c)+F'_k(-c)\big) = (-1)^k \big(F'_{k+1}(c)-F'_k(c)\big),
\end{align*}
where we have applied \eqref{eq:F'(-x)}. Meanwhile, by \eqref{eq:F(-x)} and \eqref{eq:L(-x)},
\begin{align*}
	&\big(L_{k+1}(-c) - L_k(-c)\big)^2 \left(\frac{F_{(2k+1)(n+1)}(-c)}{F_{2k+1}(-c)}\right)^2\\
	&\qquad\qquad =\big(L_{k+1}(c) + L_k(c)\big)^2 \left(\frac{F_{(2k+1)(n+1)}(c)}{F_{2k+1}(c)}\right)^2.
\end{align*}
We conclude that
\begin{align*}
	\det M^\dagger = (-1)^k \big(L_{k+1}(c)+L_{k}(c)\big)^2 \big(F'_{k+1}(c)-F'_k(c)\big)\left(\frac{F_{(2k+1)(n+1)}(c)}{F_{2k+1}(c)}\right)^2 Q_k(0)^2,
\end{align*}
thereby yielding \eqref{eq:b1-m2-sum}.

\subsection{Generic $m$}\label{sec:b1m}

In this part, we prove Theorem~\ref{th:b1-m}. Throughout, fix $m\ge 1$.

\subsubsection{Case $r= 0$}\label{sec:b1m-a}

Let $k\ge m-1$. In the context of Corollary~\ref{coro:Kra-2}, we choose the permutation
\begin{align*}
	\sigma(a) = a,\qquad\qquad a\in [0,k+m-1],
\end{align*}
so that
\begin{align*}
	\sgn(\sigma) = 1.
\end{align*}
For $i\in [0,k-1]$, we have
\begin{align*}
	Q_{N+\sigma(i)}(\varrho_j) \overset{\eqref{eq:Q-1}}{=} (-1)^{n} Q_{i}(\varrho_j).
\end{align*}
In addition,
\begin{align*}
	Q_{N+\sigma(k)}(\varrho_j) \overset{\eqref{eq:Q-1}}{=} (-1)^{n} Q_{k}(\varrho_j) \overset{\eqref{eq:Q-2}}{=} 0,
\end{align*}
and for $s$ with $k+1\le s\le k+m-1$,
\begin{align*}
	Q_{N+\sigma(s)}(\varrho_j) \overset{\eqref{eq:Q-1}}{=} (-1)^{n} Q_{s}(\varrho_j) \overset{\eqref{eq:Q-3}}{=} -(-1)^{n} Q_{2k-s}(\varrho_j) \overset{\eqref{eq:Q-1}}{=} -Q_{N+(2k-s)}(\varrho_j).
\end{align*}
Thus, by \eqref{eq:Kra-2},
\begin{align*}
	\det_{0\leq i,j \leq N-1}\big(\Psi(x^{i+j+m} Q_k(x))\big) = \sgn(\sigma) (-1)^{(k+m)N} \frac{\det M_Q}{Q_k(0)^m} \prod_{j=1}^k (-1)^{n},
\end{align*}
where
\begin{align}\label{eq:MQ}
	M_Q := \begin{pmatrix}
		\frac{\ddd^0}{\ddd x^0}\big\vert_{x\mapsto 0}\frac{Q_{(2k+1)n+k}(x)}{0!} & \cdots & \frac{\ddd^{m-1}}{\ddd x^{m-1}}\big\vert_{x\mapsto 0} \frac{Q_{(2k+1)n+k}(x)}{(m-1)!}\\[2pt]
		\frac{\ddd^0}{\ddd x^0}\big\vert_{x\mapsto 0}\frac{Q_{(2k+1)n+k\pm 1}(x)}{0!} & \cdots & \frac{\ddd^{m-1}}{\ddd x^{m-1}}\big\vert_{x\mapsto 0} \frac{Q_{(2k+1)n+k\pm 1}(x)}{(m-1)!}\\[2pt]
		\vdots & \ddots & \vdots\\[2pt]
		\frac{\ddd^0}{\ddd x^0}\big\vert_{x\mapsto 0}\frac{Q_{(2k+1)n+k\pm (m-1)}(x)}{0!} & \cdots & \frac{\ddd^{m-1}}{\ddd x^{m-1}}\big\vert_{x\mapsto 0} \frac{Q_{(2k+1)n+k\pm (m-1)}(x)}{(m-1)!}
	\end{pmatrix},
\end{align}
with $Q_{(2k+1)n+k\pm i}(x):= Q_{(2k+1)n+k+ i}(x) + Q_{(2k+1)n+k- i}(x)$. In view of \eqref{eq:F_n+-i},
\begin{align*}
	&\big(Q_{(k+1)n+k}(x), Q_{(k+1)n+k\pm 1}(x), \ldots, Q_{(k+1)n+k\pm (m-1)}(x)\big)^{\mathsf{T}}\\
	&\qquad\qquad\qquad\qquad\qquad = Q_{(k+1)n+k}(x) \big(1, L_1(x-c), \ldots, L_{m-1}(x-c)\big)^{\mathsf{T}}.
\end{align*}
As such, we can apply Lemma~\ref{le:det-derivative} and obtain
\begin{align*}
	\det M_Q &= Q_{(2k+1)n+k}(0)^m = \left(\frac{F_{(2k+1)(n+1)}(-c)}{F_{2k+1}(-c)}-\frac{F_{(2k+1)n}(-c)}{F_{2k+1}(-c)}\right)^m Q_k(0)^m\\
	&= (-1)^{mn} \left(\frac{F_{(2k+1)(n+1)}(c)}{F_{2k+1}(c)}+\frac{F_{(2k+1)n}(c)}{F_{2k+1}(c)}\right)^m Q_k(0)^m,
\end{align*}
where we have further applied \eqref{eq:D-1}, \eqref{eq:F-L}, and \eqref{eq:F(-x)}. Finally, we arrive at the desired identity
\begin{align*}
	\det_{0\leq i,j \leq N-1}\big(\Psi(x^{i+j+m} Q_k(x))\big) = \left(\frac{F_{(2k+1)n}(c)}{F_{2k+1}(c)} + \frac{F_{(2k+1)(n+1)}(c)}{F_{2k+1}(c)}\right)^m.
\end{align*}

\subsubsection{Case $r= k-m+1$}\label{sec:b1m-b}

Let $k\ge m-1$. In the context of Corollary~\ref{coro:Kra-2}, we choose the permutation
\begin{align*}
	\sigma(a) = (k+m-1) - a,\qquad\qquad a\in [0,k+m-1],
\end{align*}
so that
\begin{align*}
	\sgn(\sigma) = (-1)^{\binom{k+m}{2}}.
\end{align*}
For $i\in [0,k-1]$, we have
\begin{align*}
	Q_{N+\sigma(i)}(\varrho_j) \overset{\eqref{eq:Q-1}}{=} (-1)^{n} Q_{2k-i}(\varrho_j) \overset{\eqref{eq:Q-3}}{=} (-1)^{n-1} Q_{i}(\varrho_j).
\end{align*}
In addition,
\begin{align*}
	Q_{N+\sigma(k)}(\varrho_j) \overset{\eqref{eq:Q-1}}{=} (-1)^{n} Q_{k}(\varrho_j) \overset{\eqref{eq:Q-2}}{=} 0,
\end{align*}
and for $s$ with $k+1\le s\le k+m-1$,
\begin{align*}
	Q_{N+\sigma(s)}(\varrho_j) \overset{\eqref{eq:Q-1}}{=} (-1)^{n} Q_{2k-s}(\varrho_j) \overset{\eqref{eq:Q-3}}{=} -(-1)^{n} Q_{s}(\varrho_j) \overset{\eqref{eq:Q-1}}{=} -Q_{N+(-k+m-1+s)}(\varrho_j).
\end{align*}
Thus, by \eqref{eq:Kra-2},
\begin{align*}
	\det_{0\leq i,j \leq N-1}\big(\Psi(x^{i+j+m} Q_k(x))\big) = \sgn(\sigma) (-1)^{(k+m)N} \frac{\det M_Q}{Q_k(0)^m} \prod_{j=1}^k (-1)^{n-1},
\end{align*}
where $M_Q$ is as in \eqref{eq:MQ}. We then arrive at the desired identity
\begin{align*}
	\det_{0\leq i,j \leq N-1}\big(\Psi(x^{i+j+m} Q_k(x))\big) = (-1)^{\binom{k-m+1}{2}}\left(\frac{F_{(2k+1)n}(c)}{F_{2k+1}(c)} + \frac{F_{(2k+1)(n+1)}(c)}{F_{2k+1}(c)}\right)^m.
\end{align*}

\subsubsection{Case $r=k+1$}\label{sec:b1m-c}

Let $k\ge m$. In the context of Corollary~\ref{coro:Kra-2}, we choose the permutation
\begin{align*}
	\sigma(a) = \begin{cases}
		(k-1)-a, & \qquad\qquad a\in [0,k-1],\\
		a, & \qquad\qquad a\in [k,k+m-1],
	\end{cases}
\end{align*}
so that
\begin{align*}
	\sgn(\sigma) = (-1)^{\binom{k}{2}}.
\end{align*}
For $i\in [0,k-1]$, we have
\begin{align*}
	Q_{N+\sigma(i)}(\varrho_j) \overset{\eqref{eq:Q-1}}{=} (-1)^{n} Q_{2k-i}(\varrho_j) \overset{\eqref{eq:Q-3}}{=} (-1)^{n-1} Q_{i}(\varrho_j).
\end{align*}
In addition, for $s\in [k,k+m-1]$,
\begin{align*}
	Q_{N+\sigma(s)}(\varrho_j) \overset{\eqref{eq:Q-1}}{=} (-1)^{n+1} Q_{s-k}(\varrho_j) \overset{\eqref{eq:Q-3}}{=} (-1)^{n} Q_{3k-s}(\varrho_j) \overset{\eqref{eq:Q-1}}{=} Q_{N+(2k-s-1)}(\varrho_j).
\end{align*}
Thus, by \eqref{eq:Kra-2},
\begin{align*}
	\det_{0\leq i,j \leq N-1}\big(\Psi(x^{i+j+2} Q_k(x))\big)= \sgn(\sigma) (-1)^{(k+m)N} \frac{\det M'_Q}{Q_k(0)^m} \prod_{j=1}^k (-1)^{n-1},
\end{align*}
where
\begin{align}\label{eq:M'Q}
	M'_Q := \left(\frac{\ddd^j}{\ddd x^j}\bigg\vert_{x\mapsto 0}\big(Q_{(2k+1)n+2k+1+i}(x)-Q_{(2k+1)n+2k-i}(x)\big)\right).
\end{align}
In view of \eqref{eq:D-3},
\begin{align*}
	&\big(Q_{(2k+1)n+2k+1+i}(x)-Q_{(2k+1)n+2k-i}(x)\big)_{0\le i\le m-1}^{\mathsf{T}}\\
	&\quad = Q_k(x) \big(L_{k+1}(x-c) - L_k(x-c)\big) \tfrac{F_{(2k+1)(n+1)}(x-c)}{F_{2k+1}(x-c)} \big(F_{i+1}(x)+F_i(x)\big)_{0\le i\le m-1}^{\mathsf{T}}.
\end{align*}
As such, we can apply Lemma~\ref{le:det-derivative} and obtain
\begin{align*}
	\det M'_Q &= \big(L_{k+1}(-c) - L_k(-c)\big)^m \left(\frac{F_{(2k+1)(n+1)}(-c)}{F_{2k+1}(-c)}\right)^m Q_k(0)^m\\
	&= (-1)^{m(k+n+1)} \big(L_{k+1}(c) + L_k(c)\big)^m \left(\frac{F_{(2k+1)(n+1)}(c)}{F_{2k+1}(c)}\right)^m Q_k(0)^m.
\end{align*}
Finally, we arrive at the desired identity
\begin{align*}
	&\det_{0\leq i,j \leq N-1}\big(\Psi(x^{i+j+2} Q_k(x))\big)\\
	&\qquad = (-1)^{\binom{k+1}{2}}\big(L_{k+1}(c)+L_{k}(c)\big)^m\left(\frac{F_{(2k+1)(n+1)}(c)}{F_{2k+1}(c)}\right)^m.
\end{align*}

\subsubsection{Case $r=2k-m+1$}\label{sec:b1m-d}

Let $k\ge m$. In the context of Corollary~\ref{coro:Kra-2}, we choose the permutation
\begin{align*}
	\sigma(a) = \begin{cases}
		a+m, & \qquad\qquad a\in [0,k-1],\\
		(k+m-1) - a, & \qquad\qquad a\in [k,k+m-1],
	\end{cases}
\end{align*}
so that
\begin{align*}
	\sgn(\sigma) = (-1)^{\binom{m}{2}+mk}.
\end{align*}
For $i\in [0,k-1]$, we have
\begin{align*}
	Q_{N+\sigma(i)}(\varrho_j) \overset{\eqref{eq:Q-1}}{=} (-1)^{n+1} Q_{i}(\varrho_j).
\end{align*}
In addition, for $s\in [k,k+m-1]$,
\begin{align*}
	Q_{N+\sigma(s)}(\varrho_j) \overset{\eqref{eq:Q-1}}{=} (-1)^{n} Q_{3k-s}(\varrho_j) \overset{\eqref{eq:Q-3}}{=} (-1)^{n+1} Q_{s-k}(\varrho_j) \overset{\eqref{eq:Q-1}}{=} Q_{N+(-k+m+s)}(\varrho_j).
\end{align*}
Thus, by \eqref{eq:Kra-2},
\begin{align*}
	\det_{0\leq i,j \leq N-1}\big(\Psi(x^{i+j+2} Q_k(x))\big)= \sgn(\sigma) (-1)^{(k+m)N} \frac{\det M''_Q}{Q_k(0)^m} \prod_{j=1}^k (-1)^{n+1},
\end{align*}
where
\begin{align*}
	M''_Q := \left(\frac{\ddd^j}{\ddd x^j}\bigg\vert_{x\mapsto 0}\big(Q_{(2k+1)n+2k-i}(x)-Q_{(2k+1)n+2k+1+i}(x)\big)\right).
\end{align*}
Note that $M''_Q = -M'_Q$, where $M'_Q$ is as in \eqref{eq:M'Q}. Thus,
\begin{align*}
	\det M''_Q = (-1)^m \det M'_Q.
\end{align*}
We then arrive at the desired identity
\begin{align*}
	&\det_{0\leq i,j \leq N-1}\big(\Psi(x^{i+j+2} Q_k(x))\big)\\
	&\qquad = (-1)^{\binom{m}{2}+mk}\big(L_{k+1}(c)+L_{k}(c)\big)^m\left(\frac{F_{(2k+1)(n+1)}(c)}{F_{2k+1}(c)}\right)^m.
\end{align*}

\section{The binomial determinant of Cigler and Krattenthaler}\label{sec:binomial}

Now we prove the conjectural binomial determinant identity of Cigler and Krattenthaler stated in Theorem~\ref{th:CK}. We claim that
\begin{align}\label{eq:Cat-b1c2}
	\Cat_{n,k}^{(1,2)} = \binom{2n+1}{n+k+1}.
\end{align}
It is clear that these binomial coefficients satisfy the boundary values in \eqref{eq:a-boundary}. For $k=0$, applications of Pascal's identity give us
\begin{align*}
	\binom{2n+1}{n+1} &= \binom{2n}{n+1} + \binom{2n}{n}\\
	&= \left[\binom{2n-1}{n+1} + \binom{2n-1}{n}\right] + \left[\binom{2n-1}{n} + \binom{2n-1}{n-1}\right]\\
	&= 3\binom{2n-1}{n} + \binom{2n-1}{n+1},
\end{align*}
where we have used $\binom{2n-1}{n-1} = \binom{2n-1}{n}$. Similarly, for $k\ge 1$, we have
\begin{align*}
	\binom{2n+1}{n+k+1} = \binom{2n-1}{n+k-1} + 2\binom{2n-1}{n+k} + \binom{2n-1}{n+k+1}.
\end{align*}
Thus, the recurrence in \eqref{eq:a-rec} also holds, thereby implying the claimed relation \eqref{eq:Cat-b1c2}. Now,
\begin{align*}
	\binom{2n+3}{n+k+1} = \Cat_{n+1,k-1}^{(1,2)}.
\end{align*}
Finally, in Theorem~\ref{th:b1-m1}, we take $c=2$ and replace $k$ with $k-1$. Then the desired determinant evaluations in Theorem~\ref{th:CK} hold by noting from the Binet-type formulas \eqref{eq:Binet-F} and \eqref{eq:Binet-L} that $F_n(2) = n$ and $L_n(2) = 2$.

\subsection*{Acknowledgements}

Shane Chern was supported by the Austrian Science Fund (No.~10.55776/F1002). Wenle Shi was supported by the China Scholarship Council (No.~202506060077).

\bibliographystyle{amsplain}

\end{document}